\documentclass[11pt]{article}

\usepackage[T1]{fontenc}
\usepackage[utf8]{inputenc}
\usepackage{lmodern}
\usepackage{amsmath,amssymb,amsthm,mathtools}
\usepackage[margin=1in]{geometry}
\usepackage{microtype}
\usepackage{booktabs,tabularx,array}
\usepackage{enumitem}
\usepackage{algorithm,algpseudocode}
\usepackage{needspace,etoolbox}
\usepackage[hidelinks]{hyperref}
\usepackage{bookmark}

\newcommand{\ResearchAgentSystem}{our laboratory's internal auto-research system}

\newif\ifanonymous
\anonymousfalse

\newcommand{\AuthorOne}{Haihan Zhang}
\newcommand{\AuthorTwo}{Wendao Wu}
\newcommand{\AuthorThree}{Chenheng Zhang}
\newcommand{\AuthorFour}{Haoxuan Li}
\newcommand{\AuthorFive}{Zhouchen Lin}
\newcommand{\AuthorSix}{Cong Fang}
\newcommand{\AffiliationOne}{Peking University}

\newcommand{\EmailOne}{zhanghaihan@stu.pku.edu.cn}
\newcommand{\EmailTwo}{wuwendao@stu.pku.edu.cn}
\newcommand{\EmailThree}{chenhengz@stu.pku.edu.cn}
\newcommand{\EmailFour}{hxli@pku.edu.cn}
\newcommand{\EmailFive}{ZLIN@pku.edu.cn}
\newcommand{\EmailSix}{fangcong@pku.edu.cn}

\ifanonymous
  \newcommand{\PDFAuthors}{Anonymous Authors}
\else
  \newcommand{\PDFAuthors}{\AuthorOne; \AuthorTwo; \AuthorThree; \AuthorFour; \AuthorFive}
\fi

\hypersetup{
  pdftitle={Matching Higher-Order Oracle Complexity for Smooth Monotone Variational Inequalities},
  pdfauthor={\PDFAuthors},
  pdfsubject={Higher-order oracle complexity, tangent residuals, Halpern iteration, resisting-oracle lower bounds, and Lean-based review}
}

\allowdisplaybreaks[1]

\setlist{leftmargin=*,itemsep=2pt,topsep=3pt}
\numberwithin{equation}{section}

\newcommand{\R}{\mathbb R}
\newcommand{\E}{\mathbb E}
\newcommand{\Prob}{\mathbb P}
\newcommand{\eps}{\varepsilon}
\newcommand{\cO}{\mathcal O}
\newcommand{\Ot}{\widetilde{\mathcal O}}
\newcommand{\ip}[2]{\langle #1,#2\rangle}
\newcommand{\norm}[1]{\lVert #1\rVert}
\newcommand{\rtan}{r_{\mathrm{tan}}}
\newcommand{\Span}{\operatorname{span}}
\newcommand{\Lip}{\operatorname{Lip}}
\newcommand{\dist}{\operatorname{dist}}
\newcommand{\diam}{\operatorname{diam}}
\newcommand{\Ball}{B_2^d(R)}
\newcommand{\Proj}{\Pi_{\mathcal X}}
\newcommand{\LA}{\mathcal L_{\mathrm A}}
\newcommand{\LS}{\mathcal L_{\mathrm S}}
\newcommand{\Fail}{\textsc{Failure}}
\newcommand{\sym}{\operatorname{sym}}
\newcommand{\Nzero}{\mathbb N_0}
\newcommand{\Jet}{\mathcal J_{p,d}}
\newcommand{\Class}{\mathfrak F_{p,L_p}(\mathcal X)}
\newcommand{\Qtan}{Q_{\mathrm{tan}}}
\newcommand{\Qgap}{Q_{\mathrm{gap}}}
\newcommand{\TccTensor}[1]{\mathsf T^{\mathrm{tensor}}_{#1,\mathrm{cc}}}
\newcolumntype{P}[1]{>{\raggedright\arraybackslash}p{#1}}
\newcolumntype{Y}{>{\raggedright\arraybackslash}X}

\newtheorem{theorem}{Theorem}[section]
\newtheorem{lemma}[theorem]{Lemma}
\newtheorem{proposition}[theorem]{Proposition}
\newtheorem{corollary}[theorem]{Corollary}
\theoremstyle{definition}
\newtheorem{definition}[theorem]{Definition}

\theoremstyle{remark}
\newtheorem{remark}[theorem]{Remark}

\AtBeginEnvironment{theorem}{\Needspace{5\baselineskip}}
\AtBeginEnvironment{lemma}{\Needspace{5\baselineskip}}
\AtBeginEnvironment{proposition}{\Needspace{5\baselineskip}}
\AtBeginEnvironment{corollary}{\Needspace{5\baselineskip}}
\AtBeginEnvironment{definition}{\Needspace{5\baselineskip}}
\AtBeginEnvironment{assumption}{\Needspace{5\baselineskip}}
\AtBeginEnvironment{proof}{\Needspace{3\baselineskip}}

\title{\textbf{Matching Higher-Order Oracle Complexity for\\
Smooth Monotone Variational Inequalities}}

\ifanonymous
  \author{Anonymous Authors}
\else
  \author{%
    \AuthorOne$^{1,*}$ \quad
    \AuthorTwo$^{1,*}$ \quad
    \AuthorThree$^{1,*}$\\[0.20em]
    \AuthorSix$^{1,\dagger}$ \quad
    \AuthorFour$^{1,\dagger}$ \quad
    \AuthorFive$^{1,\dagger}$\\[0.60em]
    \small $^{1}$\AffiliationOne\\[0.35em]
    \small
    \href{mailto:\EmailOne}{\texttt{\EmailOne}} \quad
    \href{mailto:\EmailTwo}{\texttt{\EmailTwo}} \quad
    \href{mailto:\EmailThree}{\texttt{\EmailThree}}\\[-0.05em]
    \small
    \href{mailto:\EmailSix}{\texttt{\EmailSix}} \quad
    \href{mailto:\EmailFour}{\texttt{\EmailFour}} \quad
    \href{mailto:\EmailFive}{\texttt{\EmailFive}}\\[0.35em]
    \small $^{*}$Equal contribution.    \small $^{\dagger}$Corresponding authors.
  }
\fi
\date{}

\begin{document}
\maketitle

\begin{abstract}
We establish near-optimal higher-order oracle bounds for smooth
monotone variational inequalities. For fixed $p\ge2$, we consider a
monotone operator $F$ on a known compact convex set $\mathcal X$ of
diameter at most $D$, satisfying
$\operatorname{Lip}(D^{p-1}F)\le L_p$.
Each feasible query returns the complete jet
$(F,DF,\ldots,D^{p-1}F)$, and the goal is to find a point with tangent
residual
$\operatorname{dist}(0,F(x)+N_{\mathcal X}(x))\le\varepsilon$,
where $N_{\mathcal X}(x)$ denotes the normal cone. 
Writing $Q=L_pD^p/\varepsilon$, we improve the
$\widetilde O_p(Q^{1/p})$ upper bound of~\cite{halpern} to
$\widetilde O_p(Q^{2/(3p-1)})$ with a dimension-independent
deterministic algorithm.
Our algorithm returns an explicit tangent-residual certificate,
which also guarantees the benchmark's proximal-residual accuracy
at the same output point.
Thus, the faster rate applies to the original benchmark criterion,
without weakening the accuracy requirement.
We complement this improvement with an
$\Omega_p(Q^{2/(3p-1)})$ lower bound for arbitrary adaptive
deterministic algorithms and randomized algorithms with per-instance
success probability at least $2/3$, without span or tensor-update
restrictions. Together, these bounds establish
\[
  \widetilde\Theta_p\!\left(
    \left(\frac{L_pD^p}{\varepsilon}\right)^{2/(3p-1)}
  \right)
\]
as the high-dimensional worst-case oracle complexity.
The upper bound combines extrapolated warm starts for inexact
Halpern iteration with certified Taylor-model solves and a finite
acceleration hierarchy, counting all nested oracle evaluations.
The lower bound hides complete derivative tensors using exactly
flat smooth gates, a resisting-oracle construction, and random-frame
transcript coupling. 
Our method further applies to the smooth convex--concave minimax
setting of~\cite{minimax}. At fixed geometry and smoothness, it
improves the
$\widetilde O_p(\varepsilon^{-4/(3p+1)})$ upper bound
of~\cite{minimax} to
$\widetilde O_p(\varepsilon^{-2/(3p-1)})$,
matching the lower-bound exponent (\hyperref[cor:cc-specialization]
 {Corollary~\ref*{cor:cc-specialization}}).
\end{abstract}
\noindent\textbf{Keywords:} monotone variational inequalities; higher-order oracle complexity; matching upper and lower bounds; auto-research systems.

\noindent\textbf{AI Usage.}
Nearly the entire research pipeline for this paper was carried out by
\ResearchAgentSystem{}, powered by GPT-5.6 Sol. The system also conducted a
Lean-backed article audit of the resulting manuscript. The authors subsequently reviewed and approved the
mathematical claims, presentation, and formal artifacts, and take
responsibility for the final manuscript.The complete Lean audit report and the system's technical report will be made public at a later date.

\section{Introduction}
\label{sec:introduction}
A monotone variational inequality (MVI) associated with a continuous
operator $F:\mathcal X\to\R^d$ asks for a point $x^\star\in\mathcal X$
such that
\begin{equation}
  \ip{F(x^\star)}{x-x^\star}\ge0
  \qquad\text{for every }x\in\mathcal X,
  \label{eq:intro-vi}
\end{equation}
where $\mathcal X$ is convex and compact.  This single formulation
contains constrained convex minimization, convex--concave saddle-point
problems, and monotone equilibrium models that need not arise from a
scalar potential.  It is therefore the natural level of generality at
which to ask what complete higher-order local information can buy in the
worst case.

For a fixed integer $p\ge2$, we study the exact order-$p$ jet model: a
feasible query at $x$ returns
$(F(x),DF(x),\ldots,D^{p-1}F(x))$, and $D^{p-1}F$ is $L_p$-Lipschitz.
Our target is the \emph{tangent residual}
\begin{equation}
  \rtan(x)=\dist\bigl(0,F(x)+N_{\mathcal X}(x)\bigr),
  \label{eq:intro-tangent}
\end{equation}
where $N_{\mathcal X}(x)$ is the outward normal cone.  Unlike a weak gap,
this is a same-point stationarity criterion: it vanishes exactly at
solutions of~\eqref{eq:intro-vi}, reduces to $\norm{F(x)}$ at interior
points, and directly certifies the inclusion
$0\in F(x)+N_{\mathcal X}(x)$.  A small tangent residual implies the
standard bounded-domain gaps, but the converse fails in general.
Consequently, higher-order gap rates do not by themselves determine the
complexity of the strong-solution problem considered here.

\paragraph{The unresolved strong-solution frontier.}
Recent progress had brought two lines of work close to, but not at, a
matching theory.  For general smooth MVIs, higher-order Halpern iteration
gives a $\widetilde O_p(Q^{1/p})$ strong-solution benchmark after its
proximal output is converted to a nearby tangent certificate, where
\[
  Q=\frac{L_pD^p}{\varepsilon}
\]
and $D$ bounds the domain diameter~\cite{halpern}.  In parallel, for the
scalar convex--concave subclass, Chen et al.~\cite{minimax} improved the
$p$th-order tangent-residual upper exponent to $4/(3p+1)$ and proved the
lower exponent $2/(3p-1)$ for the tensor-algorithm class of their
Definition~5.1.  For every $p\ge2$, this left a genuine exponent gap.
It also left open whether the smaller exponent was attainable only on a
special scalar subclass or instead governed strong solutions of general
monotone operators, and whether a matching lower bound could be proved
without a span or tensor-update restriction.

\paragraph{Our answer.}
We resolve this frontier in the high-dimensional exact-jet model.  For
every fixed $p\ge2$,
\begin{equation}
  \boxed{
  \mathsf T_p(\varepsilon;L_p,D)
  =\widetilde\Theta_p\!\left(
     \left(\frac{L_pD^p}{\varepsilon}\right)^{2/(3p-1)}
   \right).}
  \label{eq:intro-rate}
\end{equation}
The upper bound is deterministic and independent of the ambient
dimension.  The matching lower bound applies to arbitrary adaptive
deterministic algorithms and to adaptive randomized algorithms that
succeed with probability at least $2/3$ on every instance, with the
dimension allowed to scale with the query budget.  It does not assume a
span condition, a prescribed tensor update, or that the output was itself
queried.  Both sides use the same smoothness scale $L_pD^p$, the same
feasible-query convention, and the same tangent-residual criterion.

This theorem is frontier-closing in two senses.  First, it identifies the
optimal strong-solution exponent for the full class of smooth monotone
VIs, rather than for a gradient or saddle subclass.  Second, it shows that
the exponent $2/(3p-1)$ previously seen only as a restricted
convex--concave lower bound is algorithmically attainable.  In
particular,
\[
  \frac{2}{3p-1}<\frac1p \qquad (p\ge2),
\]
so the result strictly improves the recent general-MVI Halpern exponent;
for $p=2,3,4$, the exponent becomes $2/5,1/4,2/11$, respectively.
Theorem~\ref{thm:main} gives the precise constants, logarithmic overhead,
and dimension requirements.

\paragraph{Closing the general-$p$ convex--concave exponent gap.}
The upper theorem has a lossless scalar-potential specialization.  Let
$\phi$ be convex in $x\in X$, concave in $y\in Y$, and set
\[
  F_\phi(x,y)=\bigl(\nabla_x\phi(x,y),-\nabla_y\phi(x,y)\bigr),
  \qquad Z=X\times Y,
  \qquad D_Z=\diam(Z).
\]
A complete scalar $p$-jet of $\phi$ supplies one complete order-$p$
operator jet of $F_\phi$ without increasing the highest-order smoothness
modulus.  Hence our algorithm gives the unrestricted deterministic upper
bounds
\begin{align}
  \mathsf T^{\mathrm{det}}_{p,\mathrm{cc}}
  (\varepsilon_{\mathrm{tan}})
  &\le \widetilde O_p\!\left(
    \left(
      \frac{L_pD_Z^p}{\varepsilon_{\mathrm{tan}}}
    \right)^{2/(3p-1)}
  \right),
  \label{eq:intro-cc-rate}\\
  \mathsf T^{\mathrm{det}}_{p,\mathrm{cc}}
  (\varepsilon_{\mathrm{gap}})
  &\le \widetilde O_p\!\left(
    \left(
      \frac{L_pD_Z^{p+1}}{\varepsilon_{\mathrm{gap}}}
    \right)^{2/(3p-1)}
  \right).
  \label{eq:intro-cc-gap-rate}
\end{align}
The second display follows from
$\operatorname{Gap}_\phi(z)\le D_Z\rtan(z)$.

For the matching comparison only, let $\TccTensor{p}$ denote worst-case
complexity in the $p$th-order tensor-algorithm class of Definition~5.1 in
Chen et al.~\cite{minimax}, under the common oracle and update convention
made explicit in our reduction.  Their Theorem~5.2 supplies the tangent
lower bound, while their Lemma~5.1 and the rescaling in their Eq.~(16)
supply the corresponding hard-family gap lower bound.  Combining these
results with~\eqref{eq:intro-cc-rate}--\eqref{eq:intro-cc-gap-rate} yields
\begin{equation}
  \boxed{
  \begin{aligned}
  \TccTensor{p}(\varepsilon_{\mathrm{tan}})
  &=\widetilde\Theta_p\!\left(
    \left(
      \frac{L_pD_Z^p}{\varepsilon_{\mathrm{tan}}}
    \right)^{2/(3p-1)}
  \right),\\
  \TccTensor{p}(\varepsilon_{\mathrm{gap}})
  &=\widetilde\Theta_p\!\left(
    \left(
      \frac{L_pD_Z^{p+1}}{\varepsilon_{\mathrm{gap}}}
    \right)^{2/(3p-1)}
  \right).
  \end{aligned}}
  \label{eq:intro-cc-matching}
\end{equation}
Thus, at fixed geometry and smoothness, the upper accuracy exponent is
improved from $4/(3p+1)$ to $2/(3p-1)$ for \emph{every} $p\ge2$, matching
the cited lower exponent.  The familiar second-order improvement
$4/7\to2/5$ is only the case $p=2$, not the extent of the result.  The
upper bound itself is valid without restricting the scalar algorithm to
the tensor class; only the word ``matching'' inherits the scope of the
available lower bound.  This consequence is stated formally in
\hyperref[cor:cc-specialization]{Corollary~\ref*{cor:cc-specialization}}.

\subsection{Contributions}
The paper makes five main contributions.
\begin{enumerate}[label=\textup{(C\arabic*)}]
\item \textbf{A matching theorem for general smooth MVIs.}
We prove the rate~\eqref{eq:intro-rate} for the tangent residual in a
single exact full-jet oracle model.  The algorithm returns an evaluated
certificate $(x,n,F(x))$ with $n\in N_{\mathcal X}(x)$ and
$\norm{F(x)+n}\le\varepsilon$.

\item \textbf{Closure of the general-$p$ convex--concave exponent gap.}
The scalar-potential reduction gives the rates in
\eqref{eq:intro-cc-matching}, matching the lower exponent of Chen et
al.~\cite{minimax} under exactly the algorithmic scope of that lower
bound.  This extends beyond the previously emphasized second-order case.

\item \textbf{An extrapolated Halpern acceleration principle.}
We prove a blockwise squared second-difference estimate for a firmly
nonexpansive Halpern orbit.  Projected linear extrapolation converts this
aggregate regularity into systematically better warm starts for inexact
resolvent problems.

\item \textbf{A finite hierarchy with complete query accounting.}
Starting from a certified strongly monotone tensor solver with condition
exponent $2/(p+1)$, one acceleration layer maps
$\gamma$ to $\gamma/(1+\gamma)$.  Certified restart permits exactly
$p-1$ layers and produces $2/(3p-1)$.  Every nested evaluation, failed
radius trial, and final certificate check is charged to the original
oracle.

\item \textbf{Unrestricted adaptive lower bounds and Lean-based review.}
Exactly flat smooth gates hide complete jets from arbitrary adaptive
queries.  A resisting oracle proves the deterministic lower bound, and a
random-frame transcript coupling proves its randomized counterpart.
The internal proofs, reductions, and query-accounting interfaces were
included in the system's Lean-based review.
\end{enumerate}

\subsection{Technical overview}
Let
$P=(I+\eta(F+N_{\mathcal X}))^{-1}$ be a resolvent and consider the exact
Halpern orbit
\[
  y_t=P(x_t),\qquad
  x_{t+1}=\frac{x_0+(t+1)y_t}{t+2}.
\]
If $x_0$ is within distance $R$ of a fixed point, firm
nonexpansiveness yields the block estimate
\begin{equation}
  \sum_{t=k}^{2k-1}
  \norm{y_t-2y_{t-1}+y_{t-2}}^2
  =O\!\left(\frac{R^2(1+\log k)^2}{k^2}\right).
  \label{eq:intro-energy-new}
\end{equation}
Thus the linear predictor $2y_{t-1}-y_{t-2}$ has root-mean-square error
$\widetilde O(R/k^{3/2})$ over a block of length $k$.  The gain is
aggregate, not a pointwise $k^{-3/2}$ claim.

Suppose a certified solver for a $\mu$-strongly monotone problem has
condition exponent $\gamma$.  Initializing the inexact resolvent solves
with the projected predictor and summing their true distance-sensitive
costs through~\eqref{eq:intro-energy-new} gives, up to logarithms,
\[
  K+\left(\frac{L_pR^p}{\varepsilon K}\right)^\gamma.
\]
Balancing the two terms transforms $\gamma$ into
$\gamma/(1+\gamma)$.  The base regularized Taylor-model solver has
exponent $2/(p+1)$, and certified restart generates the finite sequence
\[
  \gamma_j=\frac{2}{p+1+2j},
  \qquad j=0,1,\ldots,p-1.
\]
The last admissible level is $2/(3p-1)$, where the required power of the
second-difference energy remains summable.

For the lower bound, the hard family is built from scalar gates whose
first $p-1$ derivatives vanish identically on a central interval.  Hidden
orthogonal directions therefore reveal neither values nor any returned
derivative tensor until a query leaves the flat region.  A chain of
length $m\asymp N$ leaves a hidden coordinate after $N$ queries and
forces
\[
  \rtan(\widehat x)
  =\Omega_p\!\left(\frac{L_pD^p}{N^{(3p-1)/2}}\right).
\]
For deterministic algorithms, the frame is completed online.  For
randomized algorithms, it is sampled once and an exact-transcript
coupling transfers the simulated flat responses to a single fixed
instance.

\subsection{Scope and organization}
The result is an information-complexity theorem.  Calls to the unknown
operator are charged, including all nested evaluations and certificate
checks; exact computation with the public domain and exact solution of a
Taylor-model VI built solely from returned coefficients are treated as
internal computation.  We do not claim polynomial arithmetic complexity
or fixed-dimensional optimality.  The unrestricted lower bound is for
general monotone operators; the matching convex--concave statement uses
the explicitly restricted lower-bound class of~\cite{minimax} and should
not be read as an unrestricted scalar full-jet lower bound.

Section~\ref{sec:related} compares the relevant criteria, oracle
conventions, and rates.  Section~\ref{sec:main} states the formal model and
main theorem.  Sections~\ref{sec:base}--\ref{sec:interfaces} construct the
certified solver interfaces; Sections~\ref{sec:energy}--\ref{sec:bootstrap}
prove the upper bound; and Sections~\ref{sec:lower}--\ref{sec:lower-bounds}
prove the deterministic and randomized lower bounds.  The appendices
record the affine-hull reduction and the precise relationships among
tangent, proximal, and gap criteria.
\section{Related work and rate comparison}
\label{sec:related}
\paragraph{Indexing convention.}
The literature uses several conventions for ``order.''  In this paper,
an order-$p$ oracle for an operator returns derivatives through
$D^{p-1}F$ and assumes that $D^{p-1}F$ is Lipschitz.  For a scalar
convex--concave objective $f$ with saddle operator
$F=(\nabla_x f,-\nabla_y f)$, this corresponds to access to derivatives
of $f$ through order $p$ and Lipschitz continuity of its $p$th derivative.
All rates below are written in this convention.

\paragraph{First-order and weak-solution theory.}
For Lipschitz monotone operators, classical prox and extragradient-type
methods achieve an $O(1/T)$ weak-gap rate; Nemirovski's prox method gives
the standard bounded-domain formulation~\cite{nemirovski}.  Matching
first-order lower bounds are known already for convex--concave bilinear
subclasses~\cite{ouyangxu}.  These results concern gap or weak-solution
criteria and should not be read as same-point tangent-residual theorems.

\paragraph{Higher-order gap guarantees.}
Monteiro and Svaiter introduced the Newton proximal extragradient
framework and established pointwise and ergodic complexity bounds for
monotone variational inequalities and inclusions~\cite{monteiro}.
Subsequent higher-order methods achieve the rate
$O(T^{-(p+1)/2})$, equivalently
$O(\varepsilon^{-2/(p+1)})$, for weak VI or gap-type criteria under
regularized Taylor/tensor subproblems~\cite{bullins,adil,linjordan}.
The lower bound of Adil et al.~\cite{adil} assumes that new iterates lie
in the span generated by stationary points of regularized higher-order Taylor
models (their Assumption~4.1), while Lin and Jordan~\cite{linjordan} impose a
generalized linear-span condition.  Neither statement is an unrestricted
lower bound for arbitrary exact full-jet algorithms.

\paragraph{Strong residuals and Halpern iteration.}
Halpern's fixed-point iteration~\cite{halpern1967} has been used to obtain
near-optimal strong-solution guarantees for monotone inclusions and
variational inequalities~\cite{diakonikolas}.  Chen et
al.~\cite{halpern} combined large-step inexact Halpern iteration with
higher-order resolvent solvers and proved a
$\widetilde O(\varepsilon^{-1/p})$ bound for a normalized proximal
residual.  Their paper also explains how an approximate proximal solve
produces a nearby point with a comparable tangent residual.  The formal
output criterion and the recovery step should nevertheless be stated
separately when comparing theorem statements.

For the convex--concave subclass, Chen et al.~\cite{minimax} proved a
$\widetilde O(\varepsilon^{-4/(3p+1)})$ tangent-residual upper bound and an
$\Omega(\varepsilon^{-2/(3p-1)})$ lower bound.  Their Theorem~5.2
quantifies over the tensor-algorithm class introduced in their
Definition~5.1.  The present lower bound reaches the same exponent for
general MVI while removing a span or tensor-update restriction, at the
cost of allowing the ambient dimension to grow with the query budget.

\paragraph{Flat-gated resisting oracles.}
The lower-bound construction is also related to the exactly flat smooth
shifts used by Jang and Ryu to hide finite-order derivative information
in contractive fixed-point problems~\cite{jangryu}.  Here the gates are
embedded in a monotone VI family on a public ball, and the proof must
additionally rule out cancellation by boundary normal cones and handle
randomized adaptive feasible queries.

To state the comparison without mixing dimensions, define
\[
  \Qtan=\frac{L_pD^p}{\varepsilon_{\mathrm{tan}}},
  \qquad
  \Qgap=\frac{L_pD^{p+1}}{\varepsilon_{\mathrm{gap}}}.
\]
For the first-order row, $p=1$ and $L_1$ is the Lipschitz constant of
$F$.  The table suppresses constants depending only on the fixed order
and, where indicated, logarithmic factors.

\begin{table}[t]
\centering
\caption{Representative oracle rates for smooth monotone variational
inequalities.  Rows with different criteria are not directly comparable.
The ``lower-bound scope'' column records restrictions that are often
suppressed when only the $\varepsilon$ exponent is quoted.}
\label{tab:related-rates}
\footnotesize
\setlength{\tabcolsep}{4.2pt}
\renewcommand{\arraystretch}{1.18}
\begin{tabularx}{\linewidth}{@{}P{2.00cm}P{1.65cm}P{2.25cm}Y P{1.85cm}@{}}
\toprule
Setting & Criterion & Upper complexity & Lower-bound scope / oracle convention & Reference \\
\midrule
Lipschitz MVI, $p=1$
& weak or gap
& $O(\Qgap)$
& Matching order for standard first-order models, already on bilinear convex--concave instances.
& \cite{nemirovski,ouyangxu} \\
\addlinespace
$p$th-order smooth MVI, $p\ge2$
& weak VI or gap
& $O(\Qgap^{2/(p+1)})$
& Regularized Taylor/tensor subproblems; matching lower bounds use tensor-generated or generalized linear-span conditions.
& \cite{bullins,adil,linjordan} \\
\addlinespace
$p$th-order convex--concave
& tangent residual
& $\widetilde O(\Qtan^{4/(3p+1)})$
& $\Omega(\Qtan^{2/(3p-1)})$ for the tensor-algorithm class of Definition~5.1 in the cited paper.
& \cite{minimax} \\
\addlinespace
$p$th-order general MVI
& proximal residual; tangent recovery
& $\widetilde O(\Qtan^{1/p})$
& Exact higher-order local information; the formal main theorem is stated for a normalized proximal residual.
& \cite{halpern} \\
\addlinespace
$p$th-order general MVI
& evaluated tangent certificate
& $\widetilde O_p(\Qtan^{2/(3p-1)})$
& Matching $\Omega_p(\Qtan^{2/(3p-1)})$ for arbitrary deterministic and randomized adaptive feasible-query algorithms in high dimension.
& This work \\
\bottomrule
\end{tabularx}
\end{table}

After charging the proximal-to-tangent recovery discussed above, the
closest general-MVI benchmark in the fourth row has exponents
$1/2,1/3,1/4$ for $p=2,3,4$, whereas Theorem~\ref{thm:main} gives
$2/5,1/4,2/11$, respectively.  The comparison is therefore an
improvement for the same general operator class, not an inference from
the weaker gap rows.

The inequality $g_{\mathrm M}(x)\le g_{\mathrm S}(x)\le
D\rtan(x)$ proved in Appendix~\ref{app:criteria} shows that a tangent
certificate implies a bounded-domain gap guarantee.  The converse is
false in general, so a weak-gap lower bound and a tangent-residual upper
bound cannot be combined into a matching statement without an additional
reduction.

\section{Problem formulation and main result}

\label{sec:main}
\subsection{The operator class and solution criterion}
All vector spaces are finite-dimensional Euclidean spaces. For $j\ge1$ and a $j$-linear
map $T:(\R^d)^j\to\R^d$, define
\[
 \norm{T}_{\mathrm{op}}
 =\sup_{\norm{u_1},\ldots,\norm{u_j}\le1}
       \norm{T[u_1,\ldots,u_j]}.
\]
For a matrix $M$, write $\sym M=(M+M^\top)/2$.
We use $D^0F=F$ and $D^jF(x)[s]^j=D^jF(x)[s,\ldots,s]$.
Throughout, $B_2^d(R)=\{x\in\R^d:\norm{x}\le R\}$ is the
\emph{closed} Euclidean ball. The symbol $D$ is a supplied upper bound
on the domain diameter, not a derivative operator when used as a scalar.

\begin{definition}[Admissible monotone VI instances]
\label{def:class}
Fix an integer $p\ge2$ and known positive numbers $L_p,D$.
The public data consist of a dimension $d\ge1$, a known nonempty compact
convex set $\mathcal X\subset\R^d$ with $\diam(\mathcal X)\le D$,
and an initial point $x_0\in\mathcal X$.
The class $\Class$ consists of all operators $F$, defined and $C^{p-1}$
on an open neighborhood of $\mathcal X$, such that
\begin{equation}
 \begin{aligned}
 \ip{F(x)-F(y)}{x-y}&\ge0,\\
 \norm{D^{p-1}F(x)-D^{p-1}F(y)}_{\mathrm{op}}
    &\le L_p\norm{x-y}
       \qquad (x,y\in\mathcal X).
 \end{aligned}
 \label{eq:class}
\end{equation}
The task is to find an approximate solution of the VI
\[
 x^\star\in\mathcal X,\qquad
 \ip{F(x^\star)}{x-x^\star}\ge0
          \quad\text{for every }x\in\mathcal X.
\]
\end{definition}
We write $L_p$ for an available upper bound, not necessarily the smallest
Lipschitz constant. There are no separate bounds on the lower derivatives
or on $\norm{F}$, and no fixed positive strong-monotonicity parameter is
assumed for the original problem.

The normal cone and target are
\begin{equation}
 N_{\mathcal X}(x)=\{n:\ip{n}{z-x}\le0\text{ for all }z\in\mathcal X\},
 \qquad \rtan(x)=\dist(0,F(x)+N_{\mathcal X}(x))\le\eps.
 \label{eq:target}
\end{equation}
A solution $z^\star$ satisfies
$0\in F(z^\star)+N_{\mathcal X}(z^\star)$.
To see existence, the map $x\mapsto\Pi_{\mathcal X}(x-F(x))$ is a
continuous self-map of the compact convex set $\mathcal X$, so the
finite-dimensional fixed-point theorem gives a fixed point $z^\star$.
For $y\in\mathcal X$, the projection characterization
\[
 y=\Pi_{\mathcal X}(u)
 \quad\Longleftrightarrow\quad
 \ip{u-y}{z-y}\le0\quad(z\in\mathcal X)
\]
then gives $-F(z^\star)\in N_{\mathcal X}(z^\star)$.

The upper algorithms return a \emph{certificate}: a feasible $x$, a normal
$n\in N_{\mathcal X}(x)$, and an evaluated value $F(x)$ such that
$\norm{F(x)+n}\le\eps$. Normal membership is supplied by the exact model
construction and preserved through the algorithm; it is not inferred
from a small distance to a solution. The lower-bound algorithms need
only return a feasible point satisfying \eqref{eq:target}; requiring an
additional certificate would not weaken these lower bounds.

\subsection{Information and algorithm conventions}
One exact order-$p$ oracle call at a feasible point returns
\begin{equation}
 \cO_p^F(x)=(F(x),DF(x),\ldots,D^{p-1}F(x)).
 \label{eq:oracle}
\end{equation}
Thus order two supplies $F$ and its Jacobian. A query may depend on the entire preceding transcript. No span condition, tensor-update restriction, or requirement that the output have been queried is imposed on the lower-bound algorithms.

\begin{definition}[Adaptive exact-jet algorithms]
\label{def:policy}
Let $\mathcal L^j_{\mathrm{sym}}(\R^d;\R^d)$ denote the space of
symmetric $j$-linear maps, and let
\[
 \Jet=\R^d\times\prod_{j=1}^{p-1}
                   \mathcal L^j_{\mathrm{sym}}(\R^d;\R^d)
\]
be the ambient space of returned jets in \eqref{eq:oracle}.
Fix the public input
$(d,\mathcal X,x_0,p,L_p,D,\eps)$ and a budget $N\in\Nzero$.
A randomized algorithm uses a seed $\xi$ on a probability space,
independent of $F$. Its history and rules have the form
\[
 \begin{aligned}
 \mathsf H_0&=\varnothing,\qquad
 x_t=\varphi_t(\xi,\mathsf H_{t-1})\in\mathcal X,\\
 \mathsf H_t&=\bigl(\mathsf H_{t-1},(x_t,\mathcal O_p^F(x_t))\bigr)
                  \quad(1\le t\le N),\\
 \widehat x&=\psi(\xi,\mathsf H_N)\in\mathcal X.
 \end{aligned}
\]
The maps $\varphi_t$ and $\psi$ are measurable and defined on every
history of the corresponding dimensions. For a deterministic algorithm,
the seed space is a singleton. Algorithms may stop early; stopped runs
are padded by ignored feasible queries, while retaining the saved output.
Success on $F$ means $\rtan(\widehat x)\le\eps$.
\end{definition}

Only calls to the unknown $F$ are counted. Algorithms may perform arbitrary measurable internal operations on the transcript and the known domain. In particular, the upper construction uses exact projections and exact solutions of known regularized Taylor-model VIs. These operations do not call the unknown operator at unqueried points. This is an exact information-complexity model; no polynomial arithmetic cost, bit complexity, or free resolvent of the unknown $F$ is assumed. The tensor-model convention is consistent with the local subproblems used in~\cite{monteiro,bullins,halpern}.

Query and output rules are defined on every finite transcript of the prescribed dimensions and return points in the known domain. Randomized rules are measurable and use an independent seed. This total-feasible convention ensures that simulated lower-bound transcripts are bounded even before they are coupled to a real instance. It entails no change to a valid feasible run: proposed points can be projected onto the domain, stopped runs padded with ignored queries, and the saved output retained. The query budget is a worst-case cap, not merely an expectation over the seed.

We may work in the known affine hull of $\mathcal X$; the certificate lifting argument is given in Appendix~\ref{app:affine}. In particular, model-Jacobian arguments below can be read in this affine hull. All oracle counts include the evaluations needed to compute and check certificates. A nested evaluation of $F(z)+az+b$ uses exactly one original-$F$ query plus known affine data.

\subsection{Complexity and normalization}
Let $\mathsf T_p^{\mathrm{det}}(\eps;L_p,D)$ be the least
$N\in\Nzero$ for which an algorithm family of
Definition~\ref{def:policy} achieves \eqref{eq:target} for every finite
dimension, every admissible public domain and initial point, and every
$F\in\Class$. Each member of the family may depend on the public input,
but the same member must work for \emph{all} unknown operators compatible
with that public input. If no finite budget works, the complexity is
$+\infty$. Define $\mathsf T_p^{\mathrm{rand}}$ in the same way, with the
per-instance requirement
\[
 \Prob_\xi\{\rtan(\widehat x)\le\eps\}\ge\frac23
       \qquad\text{for every }F\in\Class.
\]
The budget is a uniform cap on actual original-oracle calls, not an
expected stopping time. This definition takes a worst case over
arbitrarily large finite dimensions. Define
\begin{equation}
 Q=\frac{L_pD^p}{\eps},\qquad \alpha_p=\frac{2}{3p-1}.
 \label{eq:Q}
\end{equation}
Constants with a subscript $p$ depend only on the fixed order $p$ and may change between displays. The notation $\widetilde O_p$ hides logarithmic factors; it is not uniform as $p\to\infty$.

\begin{theorem}[Matching higher-order oracle bounds]
\label{thm:main}
For every fixed integer $p\ge2$ there are positive constants $c_p,C_p$ such that, for all $L_p,D,\eps>0$ with $Q\ge2$,
\begin{equation}
 \boxed{\begin{aligned}
 c_pQ^{\alpha_p}
 &\le\mathsf T_p^{\mathrm{rand}}(\eps;L_p,D)
 \le\mathsf T_p^{\mathrm{det}}(\eps;L_p,D)\\
 &\le C_p(1+Q^{\alpha_p})[1+\log(3+Q)]^{6(p-1)}.
 \end{aligned}}
 \label{eq:matching}
\end{equation}
The upper method returns an explicit tangent-residual certificate and its query bound is independent of $d$. Against an $N$-query method, the deterministic lower construction works for $d\ge2N+2$. For the randomized lower bound it suffices to take $d=O(N^3\log(N+2))$ at a fixed failure probability. These are high-dimensional worst-case lower bounds with bounded feasible queries.
\end{theorem}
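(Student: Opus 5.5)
The proof splits into three independent pieces: the upper bound, the lower bound, and the trivial middle inequality $\mathsf T_p^{\mathrm{rand}}\le\mathsf T_p^{\mathrm{det}}$. The middle inequality holds because a deterministic algorithm is a randomized one with a singleton seed space.

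\emph{Upper bound.} I would first normalize by scaling $x\mapsto x/D$ and $F\mapsto F/(L_pD^{p-1})$, so that $D=L_p=1$ and $\eps=1/Q$. I would then work in the affine hull, which Appendix~\ref{app:affine} allows. The base layer is a certified regularized Taylor-model solver for $\mu$-strongly monotone resolvent problems $F(z)+\eta^{-1}(z-u)$. It should reach a certificate with cost $\widetilde O_p\bigl((L_pR^{p-1}/\mu)^{2/(p+1)}\bigr)$, where $R$ is the initial distance; this is condition exponent $\gamma_0=2/(p+1)$, and failed radius trials are charged to the original oracle. The acceleration step runs inexact Halpern iteration on $P=(I+\eta(F+N_{\mathcal X}))^{-1}$ and warm-starts the $t$-th resolvent solve at $\Proj(2y_{t-1}-y_{t-2})$. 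Firm nonexpansiveness then gives the blockwise second-difference estimate \eqref{eq:intro-energy-new}. One must check that inexactness perturbs this estimate only by summable errors, which holds if the solve tolerances decay geometrically in block index. Summing the distance-sensitive costs $(\text{dist}^{p-1}/\mu)^{\gamma}$ over a block uses H\"older's inequality. This needs $\gamma(p-1)/2\cdot$(power) $\le 1$ so that the root-mean-square control suffices, and it yields total cost $K+(Q/K)^\gamma$ up to logarithms. Balancing gives $\gamma\mapsto\gamma/(1+\gamma)$. A certified restart, which halves $R$ and doubles the accuracy, turns the resulting method into a strongly monotone solver at the next level. Iterating gives $\gamma_j=2/(p+1+2j)$, and $j=p-1$ gives $\alpha_p$, the last level at which the H\"older exponent stays admissible. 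Each layer costs at most $\log^6$, for the block count, restarts, radius search and tolerance schedule, which accounts for the factor $[1+\log(3+Q)]^{6(p-1)}$. The output certificate is $(x,n,F(x))$, where $n$ is the exact normal from the last Taylor-model solve and $F(x)$ is checked by one extra query. No step depends on $d$.

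\emph{Lower bound.} Fix $N$ and build a chain operator on $B_2^d(D/2)$ from hidden orthonormal directions $v_1,\dots,v_m$ with $m\asymp N$. The operator is the monotone skew/gradient part of a chain functional whose couplings pass through scalar gates $g$ with $g^{(k)}\equiv0$ for $k\le p-1$ on a central interval. The result is a jet that at any query depends only on the directions already ``activated''. Scaling by $\lambda$ and a length parameter $\delta$, I would verify monotonicity, which holds because the chain part is skew plus a small convex term, and the $L_p$-Lipschitz bound on $D^{p-1}F$. I would then show two things. First, while coordinate $v_{k+1}$ is inactive, every point with small $\rtan$ must have a large component along undiscovered directions. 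Here the ball's normal cone is radial, and one checks that it cannot cancel the tangential residual because iterates stay strictly inside or the radial part is orthogonal to the residual. Second, optimizing $\delta,\lambda$ against $m\asymp N$ gives $\rtan\ge c_pL_pD^pN^{-(3p-1)/2}$. Solving for $N$ gives $c_pQ^{\alpha_p}$. For deterministic algorithms, a resisting oracle chooses $v_{k+1}$ orthogonal to all past queries and previously chosen directions, so $d\ge2N+2$ suffices. For randomized algorithms, I would draw a uniformly random frame and compare the real transcript with a simulated transcript in which all responses are flat. Concentration shows that each query has projection at most about $\sqrt{\log N/d}$ on the unrevealed directions. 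A union bound over $N$ queries and $m$ directions with $d=O(N^3\log N)$ shows the transcripts coincide with probability $\ge 1-1/6$. Averaging then fixes a single instance on which the failure probability exceeds $1/3$.

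\emph{Main obstacle.} The hardest step is the warm-start cost summation. It requires turning the aggregate bound \eqref{eq:intro-energy-new}, which holds only in mean square, together with inexact resolvents and projected predictors, into a bound on $\sum_t\mathrm{dist}_t^{\,(p-1)\gamma}$ that survives all $p-1$ nested layers. I would first prove a single-layer lemma with an abstract solver interface of exponent $\gamma$ and carry only its guarantees upward, so that the induction consists of checking the exponent condition and the logarithmic bookkeeping.
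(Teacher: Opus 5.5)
The genuine gap is the hard instance in your lower bound. A gated ``skew/gradient chain'' made monotone by ``skew plus a small convex term'' does not exist as described. A map whose Jacobian is skew everywhere is affine, so once the couplings pass through gates, the Jacobian has off-diagonal entries $\lambda h'(s_i)\in[0,\lambda]$, and its symmetric part has eigenvalues of order $-\lambda$. Gated convex--concave couplings such as $\sum_i y_i(x_{i+1}-h(x_i))$ fail for the same reason, because $h''\sim1/\tau$ changes sign and destroys convexity in $x$.

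Restoring monotonicity therefore needs a diagonal term of the \emph{same} order as the coupling, not a small one. Unless that term is tuned so the operator has the form $\lambda(I-T)$ with $T$ nearly nonexpansive, the operator is well conditioned. Its solution then decays geometrically along the chain, and hiding the last coordinate forces only an exponentially small residual. So the exponent $(3p-1)/2$ that you get ``by optimizing $\delta,\lambda$'' is asserted, not derived.

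The missing idea is the contraction form $F_V=\lambda(x-T_V(x))$, with $T_V(x)=cv_1+q\sum_{i<m}h_\tau(\ip{v_i}{x})v_{i+1}$ and $q=1-1/(2m)$.
\begin{itemize}
\item The gates are $1$-Lipschitz on orthonormal coordinates, so $T_V$ is a $q$-contraction. Hence $F_V$ is monotone by Cauchy--Schwarz, and its modulus is only $\lambda/(2m)$, which is what keeps the chain long-range.
\item Its fixed point has every coordinate at least $c/2-m\tau$.
\item Weighting the coordinates of $x-T_V(x)$ by $q^{m-i}$ and telescoping gives $\norm{x-T_V(x)}\ge c/(4\sqrt m)$ whenever $|\ip{v_m}{x}|\le\tau/2$.
\item The choices $c=R/(2\sqrt m)$, $\tau=c/(8m)$, $\lambda=L_p\tau^{p-1}/B_p$ turn this into $\lambda R/(8m)\asymp L_pR^pm^{-(3p-1)/2}$.
\end{itemize}

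Your boundary argument is also wrong. The output may lie on the sphere, and the radial direction is not orthogonal to $F$. What you need is $\ip{F(x)}{x}\ge0$ for $\norm{x}=R$. Here this follows from $T_V(\Ball)\subseteq\Ball$, since $\ip{F_V(x)}{x}=\lambda(R^2-\ip{T_V(x)}{x})\ge0$. That gives $\norm{F_V(x)+ax}\ge\norm{F_V(x)}$ for all $a\ge0$.

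The rest of your plan follows the paper. This includes the Taylor-model base solver, the fixed-$\eta$ extrapolated Halpern iteration with the block energy estimate, the boost followed by capped restart, the resisting oracle in dimension $2N+2$, and the random frame coupled to a simulated truncated-operator transcript. Three smaller fixes are needed.
\begin{itemize}
\item The hierarchy does not stop because H\"older becomes inadmissible; $a=(p-1)\gamma\le2$ only gets easier as $\gamma$ shrinks. It stops because the dyadic-block contribution $k^{1-3a/2}$ must not grow, i.e.\ $\tfrac32(p-1)\gamma_{j-1}\ge1$, which holds exactly for $j\le p-1$.
\item The normalization should divide $F$ by $L_pD^p$, not $L_pD^{p-1}$, to get $\Lip(D^{p-1}\widetilde F)\le1$ and $\eps=1/Q$.
\item Inverting the residual bound gives only $T\ge(cQ)^{\alpha_p}-1$. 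This is nonpositive for moderate $Q$ because $c$ is tiny, so you also need the two-instance argument ($F\equiv\pm M$ on an interval) showing that at least one query is required.
\end{itemize}
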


\begin{corollary}[General-$p$ convex--concave specialization]
\label{cor:cc-specialization}
Let $\phi:X\times Y\to\R$ be convex in $x$, concave in $y$, and suppose
that its $p$th derivative is $L_p$-Lipschitz on the compact product domain
$Z=X\times Y$, whose Euclidean diameter is $D_Z$.  A complete scalar
$p$-jet query returns the derivatives of $\phi$ through order $p$.
Then there is a deterministic scalar-jet algorithm using
\[
  \widetilde O_p\!\left(
    \left(\frac{L_pD_Z^p}{\varepsilon_{\mathrm{tan}}}\right)^{2/(3p-1)}
  \right)
\]
queries to return a tangent-residual certificate of accuracy
$\varepsilon_{\mathrm{tan}}$, and
\[
  \widetilde O_p\!\left(
    \left(\frac{L_pD_Z^{p+1}}{\varepsilon_{\mathrm{gap}}}\right)^{2/(3p-1)}
  \right)
\]
queries to return a duality-gap certificate of accuracy
$\varepsilon_{\mathrm{gap}}$.

Under the common oracle and update convention of the $p$th-order
tensor-algorithm class in Definition~5.1 of Chen et al.~\cite{minimax},
let $\TccTensor{p}$ denote its worst-case query complexity.  Combining the
preceding upper bounds with their Theorem~5.2 for the tangent residual and
with their Lemma~5.1 plus the rescaling in their Eq.~(16) for the gap gives
\begin{align*}
  \TccTensor{p}(\varepsilon_{\mathrm{tan}})
  &=\widetilde\Theta_p\!\left(
    \left(\frac{L_pD_Z^p}{\varepsilon_{\mathrm{tan}}}\right)^{2/(3p-1)}
  \right),\\
  \TccTensor{p}(\varepsilon_{\mathrm{gap}})
  &=\widetilde\Theta_p\!\left(
    \left(\frac{L_pD_Z^{p+1}}{\varepsilon_{\mathrm{gap}}}\right)^{2/(3p-1)}
  \right).
\end{align*}
The upper statements are unrestricted; the matching statements inherit
only the algorithmic scope of the cited lower bounds.
\end{corollary}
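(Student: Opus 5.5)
The statement to prove is Corollary~\ref{cor:cc-specialization}. I would derive it from Theorem~\ref{thm:main} through a lossless reduction from scalar saddle functions to monotone operators.

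\textbf{Step 1: build the operator instance.} Set $F_\phi=(\nabla_x\phi,-\nabla_y\phi)$ on $Z=X\times Y$ and check that it lies in $\mathfrak F_{p,L_p}(Z)$.
\begin{itemize}[label=\textbullet]
\item \emph{Monotonicity.} Add the two first-order convexity/concavity inequalities at $(x,y)$ and $(x',y')$, once with the roles of the points swapped.
\item \emph{Smoothness.} $D^{p-1}F_\phi=JD^p\phi$, where $J=\mathrm{diag}(I,-I)$ acts on the output slot. Since $J$ is an isometry, $\norm{D^{p-1}F_\phi(z)-D^{p-1}F_\phi(z')}_{\mathrm{op}}\le\norm{D^p\phi(z)-D^p\phi(z')}_{\mathrm{op}}\le L_p\norm{z-z'}$. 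If the scalar Lipschitz bound is stated in a symmetric-form norm, I would compare it with the vector-valued operator norm through the polarization/Banach bound for symmetric multilinear maps. This costs at most a $p$-dependent constant, which the $\widetilde O_p$ absorbs.
\item \emph{Oracle simulation.} A scalar $p$-jet at $z$ determines $(F_\phi,DF_\phi,\ldots,D^{p-1}F_\phi)(z)$ exactly, because derivatives of order $1,\ldots,p$ of $\phi$ are rearranged by $J$. So one scalar query simulates one operator query.
\end{itemize}

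\textbf{Step 2: tangent bound.} Run the upper algorithm of Theorem~\ref{thm:main} with $D=D_Z$, $\mathcal X=Z$ and $\eps=\eps_{\mathrm{tan}}$, simulating every oracle call as in Step~1. This uses $C_p(1+Q^{\alpha_p})\log^{6(p-1)}$ queries. It returns $(z,n,F_\phi(z))$ with $n\in N_Z(z)$ and $\norm{F_\phi(z)+n}\le\eps_{\mathrm{tan}}$, which is the first bound. When $Q<2$, a single query plus the trivial certificate check should suffice.

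\textbf{Step 3: gap bound.} I would use the gap inequality $\operatorname{Gap}_\phi(z)\le D_Z\rtan(z)$ from Appendix~\ref{app:criteria}. To see it directly, write $g=F_\phi(z)+n$. Convexity in $x$ and concavity in $y$ give
\[
\phi(x,y')-\phi(x',y)\le\ip{F_\phi(z)}{z-z'}\quad\text{for all } z'=(x',y').
\]
Since $n\in N_Z(z)$ gives $\ip{n}{z-z'}\ge0$, we get $\ip{F_\phi(z)}{z-z'}\le\ip{g}{z-z'}\le D_Z\norm{g}$. Choosing $\eps_{\mathrm{tan}}=\eps_{\mathrm{gap}}/D_Z$ in Step~2 then yields $Q=L_pD_Z^{p+1}/\eps_{\mathrm{gap}}$. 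The certificate is $(z,n,F_\phi(z))$ together with the gap bound.

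\textbf{Step 4: matching statements.}
\begin{itemize}[label=\textbullet]
\item \emph{Lower bounds.} Quote Theorem~5.2 of \cite{minimax} for the tangent residual. For the gap, combine their Lemma~5.1 with the rescaling in their Eq.~(16), translating their order index into the convention fixed in Section~\ref{sec:related}, where the exponent $2/(3p-1)$ appears.
\item \emph{Upper bounds inside the tensor class.} The remaining point is to check that the upper algorithm is admissible in their Definition~5.1 under the common convention. It only queries feasible points and makes Taylor-model solves from returned jets, so the paper's ``common oracle and update convention'' is invoked here.
\end{itemize}

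\textbf{Main obstacle.} I expect the delicate step to be this scope check, together with the exact reindexing of $p$ between the two papers. The reduction in Steps~1--3 itself is routine.
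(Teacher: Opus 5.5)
Your proposal is correct and follows essentially the same route as the paper. Both realize $F_\phi=(\nabla_x\phi,-\nabla_y\phi)$ as an admissible monotone operator whose order-$p$ jet comes from one scalar $p$-jet with the same Lipschitz modulus (your $J$-isometry is the paper's Riesz identification), apply Theorem~\ref{thm:main} on $Z$, pass to the gap via $\operatorname{Gap}_\phi\le D_Z\rtan$ with $\varepsilon_{\mathrm{tan}}=\varepsilon_{\mathrm{gap}}/D_Z$, and cite Chen et al.\ under the same admissibility identification. The only slip is your $Q<2$ aside: a single model correction guarantees only accuracy $A_pL_pD^p$, which can exceed $\varepsilon$ there (e.g.\ $A_2=45/2$), but the aside is unnecessary, since running the algorithm at the stricter target $L_pD^p/2$, or using \eqref{eq:distance-upper} with $R=D$, already costs $O_p(1)$ queries.
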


\begin{proof}
The scalar saddle operator
$F_\phi=(\nabla_x\phi,-\nabla_y\phi)$ is monotone.  By the Riesz
identification of the last scalar derivative with the last operator
derivative, one scalar $p$-jet query simulates one order-$p$ operator-jet
query with the same highest-order Lipschitz modulus.  Apply
Theorem~\ref{thm:main} on $Z$.  The duality-gap upper bound follows from
$\operatorname{Gap}_\phi(z)\le D_Z\rtan(z)$, proved in
Appendix~\ref{app:criteria}.  The two lower bounds are exactly the cited
hard-family consequences, after the common oracle/update admissibility
identification used in this manuscript.
\end{proof}

\begin{remark}[Concrete exponents and the low-accuracy regime]
For $p=2,3,4$, the matched exponents are respectively
$2/5$, $1/4$, and $2/11$.  The condition $Q\ge2$ isolates the
asymptotic regime.  When $Q<2$, the additive constant in the upper bound
and the zero- or one-query edge cases absorb the normalization.
\end{remark}

The distance-sensitive upper statement is stronger than the diameter-only upper bound. If a supplied $0<R\le D$ satisfies $\norm{x_0-z^\star}\le R$ for some solution, the construction returns a certificate with at most
\begin{equation}
 C_p\left[1+\left(\frac{L_pR^p}{\eps}\right)^{\alpha_p}\right]
 \left[1+\log\left(2+\frac DR+\frac{L_pR^p}{\eps}\right)\right]^{6(p-1)}
 \label{eq:distance-upper}
\end{equation}
queries. Taking $R=D$ gives the upper half of \eqref{eq:matching}. We do not assert a matching joint dependence for arbitrary independently prescribed $R$ and $D$.

\paragraph{Scaling.} Choose $x_c\in\mathcal X$, write $x=x_c+Du$, and define
$\widetilde F(u)=F(x_c+Du)/(L_pD^p)$. Then the transformed domain has diameter at most one and $\Lip(D^{p-1}\widetilde F)\le1$. Normal cones have the same directions under positive dilation and are cones, so the tangent residual is divided by $L_pD^p$. Each oracle query simulates one query in the other scale. This justifies the dimensionless parameter $Q$.

\subsection{Notation and proof architecture}
\label{sec:roadmap}
\begin{center}
\small
\renewcommand{\arraystretch}{1.16}
\begin{tabularx}{\linewidth}{@{}lX@{}}
\toprule
Symbol & Meaning\\
\midrule
$p$, $L_p$ & Fixed oracle order and a known bound on
                 $\Lip(D^{p-1}F)$.\\
$d$, $D$ & Ambient dimension and a supplied domain-diameter bound.\\
$R$ & A supplied upper bound on the distance from the current anchor
      to some VI solution; $R=D$ is always valid. In the lower-bound
      construction, $R$ is the public ball radius and $D=2R$.\\
$\eps$, $Q$ & Target tangent residual and
                  $Q=L_pD^p/\eps$.\\
$\mu$, $\delta$ & Strong-monotonicity parameter and target certificate
                   norm for a regularized subproblem.\\
$\eta$, $K$ & Fixed resolvent stepsize and the outer horizon of one
                 complete Halpern run.\\
$\gamma$, $\alpha$ & Strong-solver condition exponent and residual-solver
                       accuracy exponent.\\
$N$ & Total original-oracle budget, including all nested calls.\\
\bottomrule
\end{tabularx}
\end{center}

The proof has two independent branches. For the upper bound, a Taylor
model first yields the certified strong solver
$\mathcal S_{2/(p+1),2}$
(Proposition~\ref{prop:base-strong}). The blockwise second-difference
estimate (Lemma~\ref{lem:second-energy}) and its inexact counterpart
(Lemma~\ref{lem:warm-energy}) transform
$\mathcal S_{\gamma,b}$ into
$\mathcal A_{\gamma/(1+\gamma),b+4}$
(Theorem~\ref{thm:boost}). Certified restart
(Lemma~\ref{lem:restart}) permits exactly $p-1$ successive levels.

For the lower bound, Lemma~\ref{lem:operator} verifies the admissibility
of a single smooth family on a public ball, and
Lemma~\ref{lem:residual} gives its hidden-coordinate residual obstruction.
Theorems~\ref{thm:det} and~\ref{thm:random} then establish exact transcript
hiding for deterministic and randomized adaptive algorithms,
respectively. The final normalization completes
Theorem~\ref{thm:main}. The upper construction and both adaptive
lower-bound arguments are developed in full below.

\section{A certifying strongly monotone base solver}
\label{sec:base}
We first construct a strongly monotone solver directly from regularized Taylor models. The argument is of the Newton proximal extragradient type~\cite{monteiro,bullins}, but we prove the complete certificate and distance-sensitive query bounds required by the acceleration hierarchy.

For an operator $G$ satisfying \eqref{eq:class}, let
\[
 \bar G_x(y)=\sum_{j=0}^{p-1}\frac1{j!}D^jG(x)[y-x]^j,
 \qquad b_p=\frac{2p}{p!},\qquad \beta=b_pL_p.
\]
\begin{lemma}[Taylor remainder in the stated regularity class]
\label{lem:taylor}
For $x,y\in\mathcal X$ and $s=y-x$, the Taylor polynomial above satisfies
\[
 \norm{G(y)-\bar G_x(y)}\le\frac{L_p}{p!}\norm{s}^p,
 \qquad
 \norm{DG(y)-D\bar G_x(y)}_{\mathrm{op}}
       \le\frac{L_p}{(p-1)!}\norm{s}^{p-1}.
\]
No continuous $p$th derivative of $G$ is required.
\end{lemma}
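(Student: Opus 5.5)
The plan is to reduce both estimates to one-dimensional integral identities along the segment $x+\tau s$, $\tau\in[0,1]$, which lies in $\mathcal X$ by convexity. The only regularity we use is the $L_p$-Lipschitz bound on $D^{p-1}G$ from \eqref{eq:class}; no $p$th derivative is invoked. Set $g(\tau)=G(x+\tau s)$. Since $G$ is $C^{p-1}$ on an open neighborhood of $\mathcal X$, $g$ is $C^{p-1}$ on $[0,1]$ with $g^{(j)}(\tau)=D^jG(x+\tau s)[s]^j$. Moreover $g^{(p-1)}$ is Lipschitz with constant $L_p\norm{s}^p$, because
\[
\norm{D^{p-1}G(x+\tau s)[s]^{p-1}-D^{p-1}G(x+\tau' s)[s]^{p-1}}\le L_p|\tau-\tau'|\norm{s}^{p}.
\]

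For the value bound, apply Taylor's formula with integral remainder only up to order $p-1$:
\[
g(1)=\sum_{j=0}^{p-2}\frac{g^{(j)}(0)}{j!}+\int_0^1\frac{(1-\tau)^{p-2}}{(p-2)!}g^{(p-1)}(\tau)\,d\tau .
\]
This needs only $g\in C^{p-1}$, and it is valid for $p\ge2$. Subtract $g^{(p-1)}(0)/(p-1)!$, using $\int_0^1(1-\tau)^{p-2}/(p-2)!\,d\tau=1/(p-1)!$. Then
\[
G(y)-\bar G_x(y)=\int_0^1\frac{(1-\tau)^{p-2}}{(p-2)!}\bigl(g^{(p-1)}(\tau)-g^{(p-1)}(0)\bigr)\,d\tau,
\]
whose norm is at most $L_p\norm{s}^p\int_0^1\tau(1-\tau)^{p-2}/(p-2)!\,d\tau$. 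By the Beta integral this equals $L_p\norm{s}^p/p!$, as claimed.

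For the Jacobian bound, fix a unit vector $u$ and consider $h(\tau)=DG(x+\tau s)[u]$. This is $C^{p-2}$ with $h^{(j)}(\tau)=D^{j+1}G(x+\tau s)[s]^j[u]$, using symmetry of the derivative tensors, which holds since $G\in C^{p-1}$. By the definition of $\bar G_x$,
\[
D\bar G_x(y)[u]=\sum_{j=0}^{p-2}\frac{1}{j!}D^{j+1}G(x)[s]^j[u],
\]
which is exactly the order-$(p-2)$ Taylor polynomial of $h$ at $1$. Repeating the previous argument with $p-1$ in place of $p$ gives the bound $L_p\norm{s}^{p-1}/(p-1)!$, uniformly over $\norm{u}\le1$.

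The edge case $p=2$ is where care is needed. There $h$ is only continuous, and the identity reduces to $DG(y)[u]-DG(x)[u]$. The bound then follows directly from the Lipschitz hypothesis, with $(p-1)!=1$. The only genuine subtlety in the lemma is avoiding any appeal to $D^pG$, and working with integral remainders stopped at order $p-1$ handles this throughout.
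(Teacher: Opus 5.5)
Your proof is correct and follows essentially the paper's route: the paper also uses the integral-remainder identity stopped at order $q=p-1$, with the difference $D^{q}H(x+ts)-D^{q}H(x)$ inside the integral and the Beta integral $\int_0^1 t(1-t)^{q-1}\,dt=1/(q(q+1))$. It then applies the same identity to $DG$ with $q=p-2$ for $p\ge3$, and uses the Lipschitz hypothesis directly when $p=2$. The only differences are cosmetic: you scalarize via $h(\tau)=DG(x+\tau s)[u]$ and take the supremum over unit $u$, whereas the paper applies the identity directly to the tensor-valued map $DG$ with its operator norm; also, your phrase ``Taylor polynomial of $h$ at $1$'' should read ``at $0$, evaluated at $\tau=1$''.
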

\begin{proof}
For a map $H$ with $q\ge1$ continuous derivatives and $L$-Lipschitz
$q$th derivative on the segment, repeated integration along that segment
gives
\[
 \begin{split}
 H(x+s)-\sum_{j=0}^{q}\frac1{j!}D^jH(x)[s]^j
 ={}&\frac1{(q-1)!}\int_0^1(1-t)^{q-1}\\[-2pt]
 &\quad\cdot\bigl(D^qH(x+ts)-D^qH(x)\bigr)[s]^q\,dt.
 \end{split}
\]
Taking the norm and using
$\int_0^1t(1-t)^{q-1}dt=1/(q(q+1))$ bounds the remainder by
$\frac{L\norm{s}^{q+1}}{(q+1)!}$. The same argument is valid when the range
of $H$ is a finite-dimensional tensor space with its operator norm.
Apply it to $H=G$, $q=p-1$, for the first estimate. For $p\ge3$,
apply it to $H=DG$, $q=p-2$, for the second; the Taylor polynomial
of $DG$ is $D\bar G_x$. When $p=2$, the second estimate is exactly
the assumed Lipschitz bound for $DG$.
\end{proof}

At $x\in\mathcal X$, query its jet and solve the known model
\begin{equation}
 0\in \bar G_x(y)+\beta\norm{y-x}^{p-1}(y-x)+N_{\mathcal X}(y).
 \label{eq:model-step}
\end{equation}
The model is a function of the returned jet and the public domain alone. Its solution exists and is unique by the following observation. Existence alone is enough for the subsequent energy estimates, but uniqueness also specifies an unambiguous deterministic model operation.

\begin{lemma}[Well-defined model step]
\label{lem:model-existence}
The model VI in \eqref{eq:model-step} has a unique solution. Its solution depends continuously on the center and model coefficients along the class of admissible center--jet pairs on a fixed domain.
\end{lemma}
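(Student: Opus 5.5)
The plan is to show that the model operator
\[
 M_x(y)=\bar G_x(y)+\beta\norm{y-x}^{p-1}(y-x)
\]
is continuous and strictly (in fact uniformly) monotone on $\mathcal X$. Existence then follows from compactness, uniqueness from strict monotonicity, and continuity from a standard stability argument.

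\textbf{Step 1: monotonicity of the model.} For $y,z\in\mathcal X$, I would write
\[
 \ip{M_x(y)-M_x(z)}{y-z}=\int_0^1\ip{DM_x(z+t(y-z))(y-z)}{y-z}\,dt .
\]
The Jacobian splits as $D\bar G_x(w)+\beta D\Phi(w-x)$, where $\Phi(s)=\norm{s}^{p-1}s$.

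\begin{itemize}
\item Monotonicity of $G$ gives $\sym DG(w)\succeq0$ on the affine hull, after differentiating along $\mathcal X$ and working in the affine hull as allowed by Appendix~\ref{app:affine}.
\item Lemma~\ref{lem:taylor} gives $\norm{D\bar G_x(w)-DG(w)}_{\mathrm{op}}\le \frac{L_p}{(p-1)!}\norm{w-x}^{p-1}$.
\item $\Phi$ is the gradient of the convex function $\norm{s}^{p+1}/(p+1)$, and its Jacobian satisfies $\sym D\Phi(s)\succeq\norm{s}^{p-1}I$.
\end{itemize}

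Since $\beta=\frac{2p}{p!}L_p=\frac{2L_p}{(p-1)!}$, these combine to
\[
 \sym DM_x(w)\succeq \frac{L_p}{(p-1)!}\norm{w-x}^{p-1}I\succeq0 .
\]
Hence $M_x$ is monotone on $\mathcal X$.

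\textbf{Step 2: strict monotonicity.} For strictness, the lower bound above vanishes only at $w=x$. A segment from $z$ to $y\neq z$ meets $x$ in at most one point, so the integrand is positive for almost every $t$. Therefore $\ip{M_x(y)-M_x(z)}{y-z}>0$ whenever $y\ne z$.

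\textbf{Step 3: existence and uniqueness.} Existence follows exactly as in the existence argument after \eqref{eq:target}: a fixed point of the continuous self-map $y\mapsto\Proj(y-M_x(y))$ of the compact convex set $\mathcal X$ solves \eqref{eq:model-step}. For uniqueness, suppose $y_1,y_2$ are two solutions, so $-M_x(y_i)\in N_{\mathcal X}(y_i)$. Adding the two normal-cone inequalities gives $\ip{M_x(y_1)-M_x(y_2)}{y_1-y_2}\le0$, and Step 2 then forces $y_1=y_2$.

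\textbf{Step 4: continuity in the data.} This is the main obstacle, because the monotonicity in Steps 1--2 came from the true operator $G$, not just from the returned jet. The statement handles this by restricting to admissible center--jet pairs, namely jets that actually arise from some operator in the class. On that set every model is strictly monotone.

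I would take a sequence of admissible pairs $(x_k,J_k)\to(x,J)$ with the limit pair also admissible. The models $M_{x_k}$ converge uniformly on the compact set $\mathcal X$ to $M_x$, since they are polynomials in $y$ with converging coefficients plus the fixed regularizer. By compactness, the solutions $y_k$ have a convergent subsequence with some limit $\bar y$. The condition $\ip{M_{x_k}(y_k)}{z-y_k}\ge0$ for all $z\in\mathcal X$ is closed under limits, so $\bar y$ solves the limit model. By Step 3 that solution is unique, so every subsequence has the same limit, and the whole sequence converges.

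To avoid relying on admissibility of the limit pair, I would try to show directly that monotonicity of the model (the bound from Step 1) persists under limits. That inequality is closed in the coefficients, and strictness still holds for the limit by the same $\beta$-term argument provided the limiting jet satisfies the limiting Taylor-type bound.
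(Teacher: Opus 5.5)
Your proposal is correct and follows essentially the same route as the paper: the Taylor bound on $DG-D\bar G_x$, combined with $\sym DG\succeq0$ and the radial term, gives $\sym DM_x(w)\succeq\frac{L_p}{(p-1)!}\norm{w-x}^{p-1}I$; strictness comes from the segment meeting $x$ at most once; existence comes from the projected fixed-point map; and continuity comes from uniform convergence, compactness, and uniqueness of the limit model. Your closing idea of replacing admissibility of the limit by the closed set of center--jet pairs satisfying the Jacobian inequality is exactly how the paper handles that point. The one small slip is calling the regularizer fixed: it moves with the center, though it still converges uniformly.
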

\begin{proof}
Write $M_x(z)=\bar G_x(z)+\beta\norm{z-x}^{p-1}(z-x)$. Continuity and compact convexity give a solution by the fixed-point argument from Section~\ref{sec:main}. Taylor's theorem applied to $DG$ yields
\[
 \norm{DG(z)-D\bar G_x(z)}_{\mathrm{op}}
 \le\frac{L_p}{(p-1)!}\norm{z-x}^{p-1}.
\]
This uses only the assumed Lipschitz $(p-1)$st derivative, not the existence of a continuous $p$th derivative. Monotonicity gives $\sym DG(z)\succeq0$ in the affine hull, first at interior points and then at boundary points by continuity. The radial term has minimum Jacobian eigenvalue $\beta\norm{z-x}^{p-1}$. Since $\beta=2L_p/(p-1)!$,
\[
 \sym DM_x(z)\succeq\frac{L_p}{(p-1)!}\norm{z-x}^{p-1}I.
\]
Integrating along the segment between distinct points gives a strictly positive monotonicity pairing: the segment can pass through $x$ at most once. Hence the model VI has at most one solution. For a convergent sequence of admissible centers and coefficients with an admissible limit, the model operators converge uniformly on the compact domain. Compactness gives convergent subsequences of model solutions, and the defining VI passes to the limit. Uniqueness identifies every such limit and proves continuity. The displayed Jacobian inequality defines a closed set of center--jet pairs on the fixed domain. The same argument gives continuous solution selection on this set. Outside it, one may return a fixed feasible point without asserting a certificate. This defines a total measurable operation that agrees with the unique model solution on every genuine queried jet.
\end{proof}
The model supplies the known normal
\[
 n=-\bar G_x(y)-\beta\norm{y-x}^{p-1}(y-x)\in N_{\mathcal X}(y).
\]
The Taylor remainder satisfies
\begin{equation}
 E:=G(y)-\bar G_x(y),\qquad \norm{E}\le\frac{L_p}{p!}\norm{y-x}^p.
 \label{eq:taylor-error}
\end{equation}

\begin{lemma}[One model correction gives an actual normal certificate]
\label{lem:correction}
Let $z^\star$ solve the VI for $G$ and suppose $\norm{x-z^\star}\le r$. A solution of \eqref{eq:model-step} obeys
\begin{equation}
 \norm{y-x}\le3r,\qquad
 \norm{G(y)+n}\le A_pL_pr^p,
 \qquad A_p=\frac{(2p+1)3^p}{p!}.
 \label{eq:correction}
\end{equation}
The correction uses at most two queries, including evaluation of its certificate.
\end{lemma}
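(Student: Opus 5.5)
The plan is to establish the distance bound first, using monotonicity of $G$ together with the variational inequalities at $y$ and at $z^\star$. The certificate bound then follows from the Taylor remainder estimate \eqref{eq:taylor-error}.

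\textbf{Step 1 (distance bound).} Set $s=y-x$ and $\rho=\norm{s}$. Since $n\in N_{\mathcal X}(y)$ and $z^\star\in\mathcal X$, we have $\ip{n}{z^\star-y}\le0$. Writing $n=-\bar G_x(y)-\beta\rho^{p-1}s$ turns this into
\[
\ip{\bar G_x(y)+\beta\rho^{p-1}s}{y-z^\star}\le0 .
\]
From the VI at $z^\star$ we get $\ip{G(z^\star)}{y-z^\star}\ge0$, and monotonicity gives $\ip{G(y)-G(z^\star)}{y-z^\star}\ge0$. Together these give $\ip{G(y)}{y-z^\star}\ge0$. Subtracting, and writing $\bar G_x(y)=G(y)-E$, we obtain
\[
\beta\rho^{p-1}\ip{s}{y-z^\star}\le\ip{E}{y-z^\star}\le\frac{L_p}{p!}\rho^p\norm{y-z^\star}.
\]
Now decompose $y-z^\star=s+(x-z^\star)$, so that $\ip{s}{y-z^\star}\ge\rho^2-\rho r$ and $\norm{y-z^\star}\le\rho+r$. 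If $\rho>0$, dividing by $\rho^{p}$ gives
\[
\beta(\rho-r)\le\frac{L_p}{p!}(\rho+r).
\]
Since $\beta=2pL_p/p!$, this becomes $2p(\rho-r)\le\rho+r$, that is, $\rho\le\frac{2p+1}{2p-1}r\le3r$ for $p\ge2$ (the constant is actually $5/3$). This step is the main obstacle. It requires care with signs and with how the Taylor error is split, and it is the place where monotonicity of the true $G$, and not of the model, is used. Exactness of the model solution makes the normal-cone inequality hold with no slack.

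\textbf{Step 2 (certificate).} Directly from the definitions, $G(y)+n=G(y)-\bar G_x(y)-\beta\rho^{p-1}s=E-\beta\rho^{p-1}s$. Hence
\[
\norm{G(y)+n}\le\frac{L_p}{p!}\rho^p+\frac{2pL_p}{p!}\rho^p=\frac{(2p+1)L_p}{p!}\rho^p\le\frac{(2p+1)3^p}{p!}L_pr^p=A_pL_pr^p .
\]
The normal $n$ is known exactly from the model construction, and Lemma~\ref{lem:model-existence} guarantees that $y$ exists.

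\textbf{Step 3 (query count).} The algorithm needs one query at $x$ to form $\bar G_x$. Solving the model is internal computation. One further query at $y$ returns $G(y)$ and thereby completes the evaluated certificate $(y,n,G(y))$. This gives at most two queries in total.
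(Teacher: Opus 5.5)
Your proof is correct and follows the paper's argument essentially line by line. The paper packages your normal-cone inequality at $y$ and the VI at $z^\star$ as monotonicity of $G+N_{\mathcal X}$ applied to $v=G(y)+n=E-\beta\norm{s}^{p-1}s$. It then uses the same bounds $\ip{s}{y-z^\star}\ge\norm{s}^2-\norm{s}r$ and $\norm{y-z^\star}\le\norm{s}+r$ to get $\norm{s}\le\frac{2p+1}{2p-1}r$, the same Taylor-remainder estimate for the certificate, and the same two-query count (the trivial case $\rho=0$ is likewise dispatched in one line).
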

\begin{proof}
Put $s=y-x$ and $v=G(y)+n=E-\beta\norm{s}^{p-1}s$. Monotonicity of $G+N_{\mathcal X}$ gives $\ip{v}{y-z^\star}\ge0$. If $s\ne0$, use
\[
 \ip{s}{y-z^\star}\ge\norm{s}^2-\norm{s}r,
 \qquad \norm{y-z^\star}\le\norm{s}+r
\]
to obtain
\[
 \beta(\norm{s}-r)\le\frac{L_p}{p!}(\norm{s}+r).
\]
Thus $\norm{s}\le(2p+1)r/(2p-1)\le3r$. The same conclusion holds for $s=0$. Now \eqref{eq:taylor-error} gives
$\norm{v}\le(2p+1)L_p\norm{s}^p/p!$. The jet at $x$ builds the model; one query at $y$ evaluates $v$.
\end{proof}

\subsection{Distance halving by tensor extragradient}
Suppose now that $G$ is $\mu$-strongly monotone, $\mu>0$, meaning
\[
 \ip{G(x)-G(y)}{x-y}\ge\mu\norm{x-y}^2
       \quad(x,y\in\mathcal X).
\]
Its VI solution $z^\star$ is unique. From $x$, compute the model point $y$ above. If $y=x$, the model already gives an exact zero certificate. Otherwise, set
\begin{equation}
 s=y-x,\qquad \lambda=\frac1{\beta\norm{s}^{p-1}},\qquad
 x^+=\Proj(x-\lambda G(y)).
 \label{eq:teg}
\end{equation}
This iteration uses at most two original $G$-oracle calls. Write $v=G(y)+n$ as in the model lemma. Then
\begin{equation}
 \lambda v=-s+\lambda E,\qquad
 \norm{\lambda E}\le\frac{\norm{s}}{2p}\le\frac{\norm{s}}2.
 \label{eq:teg-relative}
\end{equation}

\begin{lemma}[Tensor-extragradient energy inequality]
\label{lem:teg-energy}
For every $u\in\mathcal X$,
\begin{equation}
 2\lambda\ip{G(y)}{y-u}
 \le\norm{x-u}^2-\norm{x^+-u}^2-\frac34\norm{s}^2.
 \label{eq:teg-energy}
\end{equation}
\end{lemma}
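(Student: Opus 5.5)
We combine the projection characterization of $x^+$ with the model normal $n\in N_{\mathcal X}(y)$ and the relative-error bound \eqref{eq:teg-relative}. This is the standard extragradient argument with the exact proximal step replaced by the model step.

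\textbf{Step 1 (projection step).} Since $x^+=\Proj(x-\lambda G(y))$, for every $u\in\mathcal X$ we have $\ip{x-\lambda G(y)-x^+}{u-x^+}\le0$. Expanding via the three-point identity gives
\[
 2\lambda\ip{G(y)}{x^+-u}\le\norm{x-u}^2-\norm{x^+-u}^2-\norm{x^+-x}^2 .
\]
It then remains to control $2\lambda\ip{G(y)}{y-x^+}$, since adding it to the left side produces $2\lambda\ip{G(y)}{y-u}$.

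\textbf{Step 2 (model step).} Because $n\in N_{\mathcal X}(y)$ and $x^+\in\mathcal X$, we have $\ip{n}{x^+-y}\le0$, so $\ip{-n}{y-x^+}\le 0$. Writing $G(y)=v-n$ gives
\[
 \lambda\ip{G(y)}{y-x^+}\le\lambda\ip{v}{y-x^+}.
\]
By \eqref{eq:teg-relative}, $\lambda v=-s+\lambda E$ with $s=y-x$. Hence
\[
 2\lambda\ip{v}{y-x^+}=2\ip{x-y}{y-x^+}+2\ip{\lambda E}{y-x^+}.
\]
By the polarization identity, $2\ip{x-y}{y-x^+}=\norm{x-x^+}^2-\norm{s}^2-\norm{y-x^+}^2$.

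\textbf{Step 3 (absorbing the error).} Summing the two steps, the $\norm{x-x^+}^2$ terms cancel, and the remainder is
\[
 -\norm{s}^2-\norm{y-x^+}^2+2\norm{\lambda E}\,\norm{y-x^+}.
\]
By Young's inequality, $2ab-b^2\le a^2$, so this is at most $-\norm{s}^2+\norm{\lambda E}^2\le-\norm{s}^2+\norm{s}^2/4=-\tfrac34\norm{s}^2$, using $\norm{\lambda E}\le\norm{s}/2$. This yields \eqref{eq:teg-energy}.

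The only delicate point is the sign bookkeeping in Step 2. The normal-cone inequality must be applied at $y$ with test point $x^+$, which is feasible because it is a projection. The argument uses the weaker bound $\norm{s}/2$ rather than $\norm{s}/(2p)$, which is why the constant $\tfrac34$ appears.
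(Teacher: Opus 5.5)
Your proof is correct and follows essentially the same route as the paper. Both arguments use the projection inequality for $x^+$, the normal-cone inequality at $y$ tested against the feasible point $x^+$, the substitution $\lambda v=-s+\lambda E$, and completion of the square with $\norm{\lambda E}\le\norm{s}/2$; the only cosmetic difference is that you cancel the $\norm{x-x^+}^2$ terms via the polarization identity, whereas the paper writes $s=a+b$ with $a=x^+-x$, $b=y-x^+$ and expands directly.
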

\begin{proof}
The projection inequality gives
\[
 2\lambda\ip{G(y)}{y-u}
 \le\norm{x-u}^2-\norm{x^+-u}^2-\norm{x^+-x}^2
       +2\lambda\ip{G(y)}{y-x^+}.
\]
Because $n\in N_{\mathcal X}(y)$, $\ip{n}{y-x^+}\ge0$, so replacing $G(y)$ by $v=G(y)+n$ in the final inner product is an upper bound. Put $a=x^+-x$ and $b=s-a=y-x^+$. By \eqref{eq:teg-relative}, the last two terms are at most
\[
 -\norm{a}^2+2\ip{-s+\lambda E}{b}
 =-\norm{s}^2-\norm{b}^2+2\ip{\lambda E}{b}
 \le-\norm{s}^2+\norm{\lambda E}^2
 \le-\tfrac34\norm{s}^2.
\]
\end{proof}

\begin{proposition}[A finite distance-halving routine]
\label{prop:halving}
For $\mu,r>0$, if $G$ is $\mu$-strongly monotone and $\norm{w-z^\star}\le r$, then
Algorithm~\ref{alg:halve}, with the count in \eqref{eq:halving-count},
returns a feasible point within distance $r/2$ of $z^\star$ using at
most $2T_p(r,\mu)$ original-oracle queries. If an exact solution is
encountered, it may return that solution earlier.
\end{proposition}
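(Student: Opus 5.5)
Algorithm~\ref{alg:halve} presumably runs the tensor-extragradient step \eqref{eq:teg} from $x_0=w$ for a prescribed number $T_p(r,\mu)$ of iterations and returns the best iterate. Lemma~\ref{lem:teg-energy} will drive the count. Each iteration costs at most two queries: one jet at $x$ and one evaluation at $y$. Hence the query bound $2T_p(r,\mu)$ follows once the iteration count is justified. If some model step gives $y=x$, or the residual $v$ vanishes, we have an exact solution and stop early. So in what follows I assume $s_k\neq0$.

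\textbf{Step 1: per-step contraction.} I apply \eqref{eq:teg-energy} with $u=z^\star$. Strong monotonicity and the VI at $z^\star$ give
\[
\ip{G(y)}{y-z^\star}\ge\mu\norm{y-z^\star}^2 .
\]
Writing $x_k,y_k,s_k,\lambda_k$ for the iterates, this yields
\[
\norm{x_{k+1}-z^\star}^2\le\norm{x_k-z^\star}^2-2\lambda_k\mu\norm{y_k-z^\star}^2-\tfrac34\norm{s_k}^2 .
\]
In particular the distances $r_k=\norm{x_k-z^\star}$ are nonincreasing, so $r_k\le r$ throughout. Using
\[
\norm{x_k-z^\star}^2\le2\norm{y_k-z^\star}^2+2\norm{s_k}^2
\]
and $\lambda_k=1/(\beta\norm{s_k}^{p-1})$, I get a decrease of at least $c\min\{\mu\lambda_k,1\}\,r_k^2$.

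\textbf{Step 2: the case split.} Either $\norm{s_k}$ is large, or $\lambda_k$ is large.
\begin{itemize}
\item If $\mu\lambda_k\ge1$, i.e.\ $\norm{s_k}^{p-1}\le\mu/\beta$, then $r_{k+1}^2\le(1-c)r_k^2$: a constant-factor contraction.
\item Otherwise $\lambda_k<1/\mu$. By Lemma~\ref{lem:correction} applied at $x_k$, $\norm{s_k}\le3r_k\le3r$, so $\mu\lambda_k\ge\mu/(\beta(3r)^{p-1})$.
\end{itemize}
Thus every step contracts $r_k^2$ by the factor $1-c\min\{1,\mu/(\beta 3^{p-1}r^{p-1})\}$. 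Iterating gives
\[
T_p(r,\mu)=\Bigl\lceil C\bigl(1+\beta r^{p-1}/\mu\bigr)\Bigr\rceil
\]
as the count in \eqref{eq:halving-count} that reaches $r_T\le r/2$.

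\textbf{Sharper count and main obstacle.} A finer count should give exponent $2/(p+1)$ on $L_pr^{p-1}/\mu$, as required by Proposition~\ref{prop:base-strong}. For that I would not use per-step contraction but sum \eqref{eq:teg-energy} over $T$ steps, in the Monteiro--Svaiter large-step manner. This has three parts:
\begin{enumerate}
\item From the summed inequality, bound $\sum_k\norm{s_k}^2\le\tfrac43 r^2$.
\item Apply H\"older to get $\sum_k\lambda_k\ge T^{(p+1)/2}/(\beta r^{p-1})$ up to constants.
\item Use strong monotonicity to obtain $\min_k\norm{y_k-z^\star}^2\le r^2/(2\mu\sum_k\lambda_k)$.
\end{enumerate}
This requires $T\asymp(\beta r^{p-1}/\mu)^{2/(p+1)}$. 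The main obstacle is that the output must be a point within $r/2$. I would return the $y_k$ minimizing the bound $r^2/(2\mu\sum\lambda)$ via the weighted average. Since $z^\star$ is unknown, I would instead take the ergodic average $\bar y=\sum\lambda_ky_k/\sum\lambda_k$. Convexity of $\norm{\cdot-z^\star}^2$ and the summed inequality then give $\norm{\bar y-z^\star}^2\le r^2/(2\mu\sum\lambda_k)\le r^2/4$, with $\bar y\in\mathcal X$ by convexity. This average is computable from known quantities.

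Finally, I would fix $T_p(r,\mu)$ as the smallest $T$ making the H\"older bound sufficient.
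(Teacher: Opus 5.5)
Your summation argument is exactly the paper's proof: the energy inequality at $u=z^\star$ with strong monotonicity, then $\sum_k\norm{s_k}^2\le\frac43r^2$, then H\"older/Jensen to get $\Lambda_T\ge(3/4)^{(p-1)/2}T^{(p+1)/2}/(\beta r^{p-1})$, then the $\lambda$-weighted average $\bar y$. Taking the least $T$ with $\Lambda_T\ge2/\mu$ reproduces \eqref{eq:halving-count} exactly. Discard Steps~1--2, however: the per-step contraction only certifies that $O(1+\beta r^{p-1}/\mu)$ iterations suffice, which is not the count in \eqref{eq:halving-count} and is much larger than $T_p(r,\mu)$ when $L_pr^{p-1}/\mu$ is large. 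Also, Algorithm~\ref{alg:halve} returns the weighted average, not a best iterate.
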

\begin{proof}
Run \eqref{eq:teg} for $T$ iterations from $w$. Since
$\ip{G(y)}{y-z^\star}\ge\mu\norm{y-z^\star}^2$, summation gives
\begin{equation}
 2\mu\sum_{i=0}^{T-1}\lambda_i\norm{y_i-z^\star}^2
 +\frac34\sum_{i=0}^{T-1}\norm{s_i}^2\le r^2.
 \label{eq:teg-sum}
\end{equation}
If no exact solution was encountered, all $s_i$ are nonzero.
The function $u\mapsto u^{-(p-1)/2}$ is convex on $(0,\infty)$.
Consequently, using \eqref{eq:teg-sum},
\[
 \begin{aligned}
 \sum_{i=0}^{T-1}\norm{s_i}^{-(p-1)}
 &\ge T\left(\frac1T\sum_{i=0}^{T-1}\norm{s_i}^2\right)^{-(p-1)/2}\\
 &\ge \left(\frac34\right)^{(p-1)/2}
             \frac{T^{(p+1)/2}}{r^{p-1}}.
 \end{aligned}
\]
Since $\lambda_i=(b_pL_p\norm{s_i}^{p-1})^{-1}$, this proves
\begin{equation}
 \Lambda_T:=\sum_{i=0}^{T-1}\lambda_i
 \ge\frac1{b_pL_p}\left(\frac34\right)^{(p-1)/2}
       \frac{T^{(p+1)/2}}{r^{p-1}}.
 \label{eq:lambda-sum}
\end{equation}
For the feasible weighted average $\bar y=\Lambda_T^{-1}\sum_i\lambda_iy_i$, therefore,
\begin{equation}
 \norm{\bar y-z^\star}^2\le\frac{r^2}{2\mu\Lambda_T}
 \le C_p\frac{L_pr^{p+1}}{\mu T^{(p+1)/2}}.
 \label{eq:teg-distance}
\end{equation}
Consequently, distance can be halved in
\begin{equation}
 O_p\left(1+\left(\frac{L_pr^{p-1}}\mu\right)^{2/(p+1)}\right)
 \label{eq:base-halving}
\end{equation}
queries. This is a distance guarantee, not an unsupported tangent-residual claim at the average.

An explicit sufficient iteration count is
\begin{equation}
 T_p(r,\mu)=\max\left\{1,
 \left\lceil\left[2b_p\left(\frac43\right)^{(p-1)/2}
                    \frac{L_pr^{p-1}}{\mu}\right]^{2/(p+1)}\right\rceil\right\}.
 \label{eq:halving-count}
\end{equation}
Indeed, \eqref{eq:lambda-sum} then implies $\Lambda_T\ge2/\mu$, and \eqref{eq:teg-distance} gives distance at most $r/2$.
\end{proof}

\begin{algorithm}[tbp]
\caption{\textsc{Halve}$(G,w,\mu,r)$: a finite distance-halving schedule}
\label{alg:halve}
\begin{algorithmic}[1]
\Require $w\in\mathcal X$, $r,\mu>0$; the promise $\norm{w-z^\star}\le r$ is used only for the guarantee.
\State $x\gets w$, $\Lambda\gets0$, $s_{\mathrm{avg}}\gets0$, $T\gets T_p(r,\mu)$ from \eqref{eq:halving-count}.
\For{$i=0,\ldots,T-1$}
 \State Query the jet of $G$ at $x$ and solve \eqref{eq:model-step} for $y$.
 \If{$y=x$} \State \Return $y$ \Comment{An exact VI solution.} \EndIf
 \State Query $G(y)$; set $\lambda\gets(\beta\norm{y-x}^{p-1})^{-1}$.
 \State $s_{\mathrm{avg}}\gets s_{\mathrm{avg}}+\lambda y$; $\Lambda\gets\Lambda+\lambda$.
 \State $x\gets\Pi_{\mathcal X}(x-\lambda G(y))$.
\EndFor
\State \Return $s_{\mathrm{avg}}/\Lambda$.
\end{algorithmic}
\end{algorithm}

Repeat this halving routine with promised radii $r,r/2,r/4,\ldots$ until the radius is at most $(\delta/(A_pL_p))^{1/p}$. Lemma~\ref{lem:correction} then returns a certificate of norm at most $\delta$. The geometric sum of the nonconstant halving costs is bounded by a constant times the first one. Thus for a \emph{valid supplied radius} $r$, the cost is
\begin{equation}
 O_p\left(1+\left(\frac{L_pr^{p-1}}\mu\right)^{2/(p+1)}
       +\log_+\frac{A_pL_pr^p}{\delta}\right),
 \label{eq:base-known-radius}
\end{equation}
where $\log_+(s)=\max\{0,\log s\}$.
To make the summation explicit, the number of halving stages is at most
\[
 1+\frac{1}{p\log 2}\log_+\frac{A_pL_pr^p}{\delta},
\]
and their nonconstant costs are bounded by the geometric sum
\[
 \left(\frac{L_pr^{p-1}}\mu\right)^{2/(p+1)}
 \sum_{j\ge0}2^{-2j(p-1)/(p+1)}.
\]
These are bounds on the \emph{scheduled} query count even when the
promised radius is false. Only the distance guarantee uses the promise.
The final certificate must therefore be checked before accepting output.

\begin{algorithm}[tbp]
\caption{\textsc{BaseTrial}$(G,w,\mu,R,\delta)$: a certified trial with a possibly false radius promise}
\label{alg:base-trial}
\begin{algorithmic}[1]
\State $z\gets w$; $r\gets R$.
\While{$A_pL_pr^p>\delta$}
 \State $z\gets\textsc{Halve}(G,z,\mu,r)$; $r\gets r/2$.
\EndWhile
\State At $z$, perform the model correction of Lemma~\ref{lem:correction}, obtaining $(y,n,G(y))$.
\If{$\norm{G(y)+n}\le\delta$} \State \Return $(y,n,G(y))$. \EndIf
\State \Return $\Fail$.
\end{algorithmic}
\end{algorithm}
Even when $R$ is false, Algorithm~\ref{alg:base-trial} has a finite schedule. Its normals remain genuine because model solving does not depend on the distance promise. A false promise may cause a failed norm test, but never an accepted false certificate.

\section{Certifying interfaces and unknown initial distances}
\label{sec:interfaces}
The warm-start distance will be a quantity in the analysis, not input information. This distinction is essential. We now formalize a distance-adaptive interface and keep explicit logarithms so that nested precision costs cannot hide a power of $\eps^{-1}$.

For $0<R\le D$ and $\eps>0$, define
\begin{align}
 Q_R&=\frac{L_pR^p}{\eps},&
 \LA(R,\eps)&=1+\log\left(2+\frac DR+Q_R\right),\label{eq:LA}\\
 \kappa_D&=\frac{L_pD^{p-1}}\mu,&
 \LS(\mu,\delta)&=1+\log\left(2+\kappa_D+
          \frac{L_pD^p}{\delta}+\frac{\mu D}{\delta}\right).
 \label{eq:LS}
\end{align}
All quantities inside logarithms are dimensionless.

\begin{definition}[Two algorithm interfaces]
\label{def:interfaces}
A residual solver $\mathcal A_{\alpha,b}$, on input $(F,x_0,R,\eps)$, has a \emph{distance promise}: some VI solution $z^\star$ satisfies $\norm{x_0-z^\star}\le R$. Under this promise it returns a feasible normal certificate of norm at most $\eps$ using
\begin{equation}
 C_p(1+Q_R^\alpha)\LA(R,\eps)^b
 \label{eq:A-interface}
\end{equation}
queries. It is budget capped: if its prescribed budget is exhausted without a verified certificate, it reports failure. It never returns an unverified successful output, even under a false promise. Every active cap is charged before a new original-oracle query, including queries in all descendants. Exhaustion immediately aborts that capped call; it is not a check performed after an uncontrolled subcall has finished.

A strong solver $\mathcal S_{\gamma,b}$, on input $(G,w,\mu,\delta)$, assumes $G$ is $\mu$-strongly monotone, but is \emph{not} given $r=\norm{w-z^\star}$. It returns a feasible normal certificate of norm at most $\delta$ using
\begin{equation}
 C_p\left[1+\left(\frac{L_pr^{p-1}}\mu\right)^\gamma\right]
       \LS(\mu,\delta)^b
 \label{eq:S-interface}
\end{equation}
queries. The convention at $r=0$ is the additive constant in this expression. Each interface has a fixed, known leading constant attached to its implementation. These constants are chosen recursively from the estimates proved below, independently of the instance. A returned certificate also carries the already evaluated operator value at its point.
\end{definition}

\begin{proposition}[A base strong solver with no supplied distance]
\label{prop:base-strong}
The construction in Section~\ref{sec:base} yields
$\mathcal S_{2/(p+1),2}$.
\end{proposition}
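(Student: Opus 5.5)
The plan is to wrap Algorithm~\ref{alg:base-trial} in a doubling search over the unknown radius $r=\norm{w-z^\star}$. Each trial is certified, so a false radius guess can only cost queries and can never produce a wrong output. The radius guesses will run upward from a small floor.

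\textbf{Step 1: a radius floor.} Set $r_{\min}=(\delta/(A_pL_p))^{1/p}$. The first trial, with $R_0=r_{\min}$, performs no halving stage at all. It executes only the correction of Lemma~\ref{lem:correction}, which costs two queries. If $r\le r_{\min}$, that lemma gives a residual of at most $A_pL_pr^p\le\delta$, so the trial succeeds. This is what yields the additive constant in \eqref{eq:S-interface}, including the convention at $r=0$.

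\textbf{Step 2: doubling.} The subsequent trials use $R_k=2^kr_{\min}$, and each is run to completion as Algorithm~\ref{alg:base-trial}. The search stops at the first trial whose final norm test passes. Because every normal comes from the exact model, any accepted output is a genuine certificate.

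To bound the number of trials, let $k^\star$ be the first index with $R_{k^\star}\ge r$. From that trial on the promise is valid, and Proposition~\ref{prop:halving} together with Lemma~\ref{lem:correction} guarantee success. The search therefore stops no later than trial $k^\star$. Since $R_{k^\star}\le 2\max\{r,r_{\min}\}$, trial $k^\star$ is not too expensive.

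For strong monotonicity, $\mu r\le\norm{G(w)+n'}$ for any normal $n'$ at $w$, but $\norm{G(w)}$ is not controlled by $D$ alone. So instead I would use the bound $r\le D$, which holds because $w,z^\star\in\mathcal X$. This gives
\[
 k^\star\le 1+\log_2\frac{D}{r_{\min}}=O_p\!\left(1+\log\frac{L_pD^p}{\delta}\right)\le O_p(\LS(\mu,\delta)).
\]

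\textbf{Step 3: summing costs.} By \eqref{eq:base-known-radius}, which bounds scheduled queries even under a false promise, trial $k$ costs
\[
 O_p\!\left(1+(L_pR_k^{p-1}/\mu)^{2/(p+1)}+\log_+(A_pL_pR_k^p/\delta)\right).
\]
The power terms grow geometrically in $k$, so their sum is dominated by the last one, $(L_pR_{k^\star}^{p-1}/\mu)^{2/(p+1)}$. I would split into two cases according to the size of $R_{k^\star}$.

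\emph{Case $R_{k^\star}\le 2r$.} The last power term is $\lesssim(L_pr^{p-1}/\mu)^{2/(p+1)}$.

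\emph{Case $r<r_{\min}$.} The search already stopped at trial $0$, so the cost is constant.

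The remaining terms are, per trial, the constant $1$ and the logarithm, which is at most $O_p(\LS)$. Summed over at most $O_p(\LS)$ trials they contribute $O_p(\LS^2)$. The total is therefore
\[
 C_p\Bigl[1+(L_pr^{p-1}/\mu)^{2/(p+1)}\Bigr]\LS(\mu,\delta)^2,
\]
which is exactly the interface of $\mathcal S_{2/(p+1),2}$.

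\textbf{Main obstacle.} The delicate point is the accounting for trials with $R_k<r$. Their halving schedules are fixed by the guessed radius, not the true one, so the bound \eqref{eq:base-known-radius} must be applied with $R_k$ in place of $r$. Doing so also keeps the square on $\LS$ from growing into a larger power. A second check is that the argument still works when the average returned by \textsc{Halve} is followed by a failed test: here I would rely on the finite-schedule remark after Algorithm~\ref{alg:base-trial}.
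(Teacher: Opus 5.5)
Your proposal is correct and is essentially the paper's argument: a doubling search over radius guesses, each guess running a certified \textsc{BaseTrial} restarted from $w$. The search must stop at the first guess $\ge r$, and every trial is charged its scheduled cost \eqref{eq:base-known-radius} at the guessed radius. The differences are minor: you start at the accuracy radius $(\delta/(A_pL_p))^{1/p}$ and sum the power terms geometrically, whereas the paper starts at $\rho=(\mu/L_p)^{1/(p-1)}$ (where the condition quantity equals one), caps the guesses at $D$ with a \Fail\ branch so the wrapper stays finite even on invalid inputs, and simply multiplies the $O_p(\LS)$ trial count by the largest trial cost.
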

\begin{proof}
Let $\rho=(\mu/L_p)^{1/(p-1)}$. Starting each trial from the original point $w$, try radii
\[
 R_j=\min\{D,2^j\rho\},\qquad j=0,1,\ldots,
\]
stopping at the first verified certificate; do not repeat $D$. For each trial run Algorithm~\ref{alg:base-trial}, using its finite schedule and final certificate test. The first radius at least $r$ succeeds. If $\rho\ge D$ there is just one trial, and $\kappa_D\le1$. Otherwise the largest required trial radius is at most $2\max\{r,\rho\}$. Hence its condition quantity is at most a $p$-dependent constant times $1+L_pr^{p-1}/\mu$.

There are $O_p(\LS)$ trials. In each, the logarithmic term in \eqref{eq:base-known-radius} is $O_p(\LS)$, since $R_j\le D$. Summing even the crude bound ``number of trials times largest trial cost'' gives \eqref{eq:S-interface} with $\gamma=2/(p+1)$ and $b=2$. No trial assumes that the unknown true distance has been observed.
\end{proof}

\begin{lemma}[A residual solver can be restarted into a strong solver]
\label{lem:restart}
Existence of $\mathcal A_{\alpha,b}$ implies existence of
$\mathcal S_{\alpha,b+2}$ for every fixed $\alpha>0$ occurring below.
\end{lemma}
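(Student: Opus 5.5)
The plan mirrors the proof of Proposition~\ref{prop:base-strong}: guess the unknown distance by doubling, and replace each guess by a budget-capped call to the residual solver. Given $(G,w,\mu,\delta)$ with $G$ $\mu$-strongly monotone, set $\rho=(\mu/L_p)^{1/(p-1)}$ and try radii $R_j=\min\{D,2^j\rho\}$ for $j=0,1,\ldots$, without repeating $D$. For each trial we call $\mathcal A_{\alpha,b}$ on $(G,w,R_j,\delta)$ with its prescribed cap $C_p(1+Q_{R_j}^\alpha)\LA(R_j,\delta)^b$, where $Q_{R_j}=L_pR_j^p/\delta$, and we stop at the first verified certificate. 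Soundness is automatic. By Definition~\ref{def:interfaces}, $\mathcal A$ never returns an unverified success even under a false promise, and caps are charged before every query, including nested ones. Every accepted output is therefore a genuine normal certificate of norm at most $\delta$, carrying its evaluated $G$-value. Success is also guaranteed: the unique solution $z^\star$ lies in $\mathcal X$ and $r=\norm{w-z^\star}\le D$, so the first trial with $R_j\ge r$ satisfies the promise and returns.

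The main obstacle is that the cost of $\mathcal A$ is expressed through $Q_R^\alpha=(L_pR^p/\delta)^\alpha$, whereas $\mathcal S_{\alpha,b+2}$ must be charged $(L_pr^{p-1}/\mu)^\alpha$, which contains no $\delta$. A naive run at accuracy $\delta$ and radius $R\approx r$ costs $(L_pr^p/\delta)^\alpha$, and this can exceed the target bound when $\delta\ll\mu r$. To handle this, I would first use strong monotonicity to convert residual accuracy into distance. If $(y,n)$ is a certificate with $\norm{G(y)+n}\le\epsilon'$, then pairing with $y-z^\star$ and using $0\in G(z^\star)+N_{\mathcal X}(z^\star)$ gives $\mu\norm{y-z^\star}^2\le\epsilon'\norm{y-z^\star}$, hence $\norm{y-z^\star}\le\epsilon'/\mu$. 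The restart then runs in stages. With current radius $r_k$, call $\mathcal A$ at accuracy $\epsilon_k=\mu r_k/2$, which halves the distance: $r_{k+1}=r_k/2$. The stage cost is $(L_pr_k^p/(\mu r_k/2))^\alpha\asymp(L_pr_k^{p-1}/\mu)^\alpha$, exactly the target quantity. Because $p\ge2$, these costs decrease geometrically along the stages. Stages continue until $\mu r_k\le 2\delta$, after which one final call at accuracy $\delta$ and radius $r_k$ costs $(L_pr_k^p/\delta)^\alpha\le(2L_pr_k^{p-1}/\mu)^\alpha$. Each stage is launched from the point of the previous certificate. Since the initial $r$ is unknown, each stage is wrapped in the capped doubling search over $R_j$ described above; only the first stage actually needs the search, but running it throughout is harmless.

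Accounting proceeds as follows. The number of halving stages is at most $O(\log_+(\mu D/\delta))=O(\LS)$, and the number of doubling trials per stage is at most $O(\log(2+\kappa_D))=O_p(\LS)$. In every trial $R_j\le D$, and the accuracy is at least $\delta$, so each $\LA$ factor is $O_p(\LS)$. The largest radius attempted in the first stage is at most $2\max\{r,\rho\}$, so its condition quantity is $O_p(1+L_pr^{p-1}/\mu)$; this is where the choice of $\rho$ matters, since radii below $\rho$ cost only a constant. Failed trials below the correct radius cost no more than the successful one. Bounding the total crudely by (number of stages)$\times$(number of trials)$\times$(maximal trial cost) gives $C_p[1+(L_pr^{p-1}/\mu)^\alpha]\LS^{b}\cdot O_p(\LS^2)$. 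This is \eqref{eq:S-interface} with exponent $\alpha$ and logarithmic power $b+2$. The constant $C_p$ is fixed recursively from that of $\mathcal A$, independently of the instance, and the case $r=0$ contributes only the additive constant.
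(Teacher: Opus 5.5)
Your proposal is correct and is essentially the paper's proof: the same guesses $\min\{D,2^j\rho\}$, the same conversion $\mu\norm{y-z^\star}\le\norm{G(y)+n}$, halving stages at accuracy $\max\{\delta,\mu r/2\}$ so that each call costs $O_p\bigl((L_pr^{p-1}/\mu)^\alpha\bigr)$, and the same crude count of $O_p(\LS)$ stages times $O_p(\LS)$ guesses giving the power $b+2$. The paper nests the loops the other way (each guess runs a full \textsc{RestartTrial} from $w$), but a verified stage at accuracy $\mu r/2$ makes the next promise valid, so only a trial's first call can fail and the two organizations coincide. Two small bookkeeping points remain: later stages must use the known radius $r_k$ rather than re-searching from $\rho$, since at accuracy $\mu r_k/2$ a guess of size $\rho$ costs about $(\rho/r_k)^\alpha$; and bounding $\LA$ by $O_p(\LS)$ also needs every radius used to be at least $\min\{D,\rho,\delta/\mu\}$, because $\LA$ contains $D/R$, which $R\le D$ alone does not control.
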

\begin{proof}
First suppose that a radius $R$ is valid. For a $\mu$-strongly monotone VI, any feasible normal certificate satisfies
\begin{equation}
 \mu\norm{x-z^\star}\le\norm{G(x)+n},\qquad n\in N_{\mathcal X}(x).
 \label{eq:strong-certificate-distance}
\end{equation}
Indeed, strong monotonicity of $G+N_{\mathcal X}$ and Cauchy--Schwarz give this inequality after pairing with $x-z^\star$.

For $R_j=R/2^j$, run $\mathcal A_{\alpha,b}$ at accuracy $\mu R_j/2$, and use its output as the next center. This halves the true distance whenever the current promise is valid. At the first stage where $\mu R_j/2\le\delta$, instead run the final call at accuracy $\delta$ and return its certificate. The number of stages is $O(1+\log_+(\mu R/\delta))$. Every call has $Q_{R_j}\le2L_pR_j^{p-1}/\mu$, so the nonconstant costs sum geometrically.

To remove the initial distance promise, use the same guesses
$R=\min\{D,2^\ell\rho\}$ as in Proposition~\ref{prop:base-strong}, with $\rho=(\mu/L_p)^{1/(p-1)}$. On a false promise a subcall can hit its cap and fail; that trial is then abandoned. Every trial has a predetermined finite sum of caps, and a final successful certificate is checked. On the first valid guess no cap is reached prematurely, so the trial succeeds.

For these guessed radii, $D/R\le\max\{1,D/\rho\}$. Before the final accuracy stage, the smallest radius is bounded below, up to a factor two, by $\delta/\mu$; if no halving is needed, the initial guessed radius gives the bound instead. Consequently all the logarithms $\LA(R_j,\eps_j)$ are $O_p(\LS(\mu,\delta))$. There are $O_p(\LS)$ stages and $O_p(\LS)$ guesses. The largest required guess is at most $2\max\{r,\rho\}$, or $D$ if $\rho>D$. The resulting bound is precisely \eqref{eq:S-interface} with exponent $\alpha$ and logarithmic power $b+2$.

The caps are important: using a promised-distance method without a cap on an invalid guessed radius would not justify its trial cost.
\end{proof}

\begin{algorithm}[tbp]
\caption{\textsc{AdaptiveStrong}$(\mathcal T,G,w,\mu,\delta)$: remove a radius promise}
\label{alg:adaptive-strong}
\begin{algorithmic}[1]
\Require A finite certified trial $\mathcal T(G,w,\mu,R,\delta)$, valid whenever $R\ge\norm{w-z^\star}$.
\State $\rho\gets(\mu/L_p)^{1/(p-1)}$; $R\gets\min\{D,\rho\}$.
\Loop
 \State Run $\mathcal T(G,w,\mu,R,\delta)$ from the original point $w$.
 \If{the trial returns a verified certificate of norm at most $\delta$}
  \State \Return that certificate, including its evaluated operator value.
 \EndIf
 \If{$R=D$} \State \Return $\Fail$ \Comment{Impossible on valid strong instances.} \EndIf
 \State $R\gets\min\{D,2R\}$.
\EndLoop
\end{algorithmic}
\end{algorithm}

\begin{algorithm}[tbp]
\caption{\textsc{RestartTrial}$(\mathcal A,G,w,\mu,R,\delta)$}
\label{alg:restart-trial}
\begin{algorithmic}[1]
\Require A budget-capped residual solver $\mathcal A$ with a distance promise.
\State $z\gets w$; $r\gets R$.
\Loop
 \State $\sigma\gets\max\{\delta,\mu r/2\}$.
 \State Run the capped call $\mathcal A(G,z,r,\sigma)$.
 \If{the call fails} \State \Return $\Fail$. \EndIf
 \State Let $(y,n,G(y))$ be its verified output.
 \If{$\sigma=\delta$} \State \Return $(y,n,G(y))$. \EndIf
 \State $z\gets y$; $r\gets r/2$.
\EndLoop
\end{algorithmic}
\end{algorithm}
Proposition~\ref{prop:base-strong} is Algorithm~\ref{alg:adaptive-strong} with $\mathcal T=\textsc{BaseTrial}$. Lemma~\ref{lem:restart} is the same wrapper with $\mathcal T=\textsc{RestartTrial}$. Each new trial restarts from $w$, rather than reusing an unchecked point from a failed trial. Neither algorithm restricts the original domain to an artificial ball; all normals belong to $N_{\mathcal X}$.

\section{Second-difference energy along a Halpern orbit}
\label{sec:energy}
Let $P$ be a firmly nonexpansive map with a fixed point $z^\star$. Firm nonexpansiveness means
\begin{equation}
 \norm{P(x)-P(y)}^2+
 \norm{(x-P(x))-(y-P(y))}^2\le\norm{x-y}^2.
 \label{eq:firm}
\end{equation}
Consider the exact Halpern orbit
\begin{equation}
 y_t=P(x_t),\qquad x_{t+1}=\frac{x_0+(t+1)y_t}{t+2},\qquad t\ge0,
 \label{eq:exact-halpern}
\end{equation}
where $\norm{x_0-z^\star}\le R$. Its points $x_t,y_t$ stay within distance $R$ of $z^\star$. Put $H_t=\sum_{i=1}^t1/i$ and $H_0=0$.

\begin{lemma}[Elementary first-difference and residual estimates]
\label{lem:firstdiff}
For $t\ge1$, write $b_t=x_t-x_{t-1}$ and $a_t=y_t-y_{t-1}$. Then
\begin{equation}
 \norm{a_t}\le\norm{b_t}\le\frac{2RH_t}{t+1},
 \qquad
 \norm{x_t-y_t}\le\frac{2R(1+H_{t+1})}{t+1}.
 \label{eq:firstdiff}
\end{equation}
For $t\ge2$, the exact identity
\begin{equation}
 b_t=\frac{t}{t+1}a_{t-1}+\frac{y_{t-2}-x_0}{t(t+1)}
 \label{eq:b-identity}
\end{equation}
holds. In particular, for $c_t=b_t-a_{t-1}$,
\begin{equation}
 \norm{c_t}\le\frac{2R(1+H_{t-1})}{t(t+1)}.
 \label{eq:c-bound}
\end{equation}
\end{lemma}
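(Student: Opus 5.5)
The plan is to derive everything from the recursion \eqref{eq:exact-halpern} by direct algebra, using only two facts about the orbit. First, $P$ is nonexpansive, which follows from \eqref{eq:firm}. Second, all iterates lie in the ball of radius $R$ about $z^\star$, as stated before the lemma. This second fact follows by induction: $\norm{y_t-z^\star}\le\norm{x_t-z^\star}$, and $x_{t+1}-z^\star$ is a convex combination of $x_0-z^\star$ and $y_t-z^\star$. In particular $\norm{y_s-x_0}\le 2R$ for every $s$, and this is the only size information we will feed in.

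\emph{Step 1: the identity \eqref{eq:b-identity}.} Write $x_{t+1}=\frac{1}{t+2}x_0+\frac{t+1}{t+2}y_t$ and $x_t=\frac1{t+1}x_0+\frac{t}{t+1}y_{t-1}$, and subtract. Splitting $\frac{t+1}{t+2}y_t-\frac{t}{t+1}y_{t-1}$ as $\frac{t+1}{t+2}a_t$ plus the leftover multiple of $y_{t-1}$, the leftover coefficient is
\[
\frac{t+1}{t+2}-\frac{t}{t+1}=\frac{1}{(t+1)(t+2)},
\]
and the $x_0$ coefficient is its negative. This gives $b_{t+1}=\frac{t+1}{t+2}a_t+\frac{y_{t-1}-x_0}{(t+1)(t+2)}$, and shifting $t\mapsto t-1$ yields \eqref{eq:b-identity} for $t\ge2$.

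\emph{Step 2: the first-difference bound.} Nonexpansiveness gives $\norm{a_t}\le\norm{b_t}$ immediately. Combining this with \eqref{eq:b-identity} and $\norm{y_{t-2}-x_0}\le2R$ gives
\[
(t+1)\norm{b_t}\le t\norm{b_{t-1}}+\frac{2R}{t}.
\]
The base case is $b_1=(y_0-x_0)/2$, so $2\norm{b_1}\le 2R=2RH_1$. Telescoping then gives $(t+1)\norm{b_t}\le 2RH_t$.

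\emph{Step 3: the bound \eqref{eq:c-bound} on $c_t$.} From \eqref{eq:b-identity}, $c_t=-\frac{1}{t+1}a_{t-1}+\frac{y_{t-2}-x_0}{t(t+1)}$. Inserting $\norm{a_{t-1}}\le 2RH_{t-1}/t$ from Step 2 gives $\norm{c_t}\le 2R(H_{t-1}+1)/(t(t+1))$.

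\emph{Step 4: the residual bound.} The recursion gives $x_{t+1}-y_t=(x_0-y_t)/(t+2)$. Hence
\[
x_t-y_t=-b_{t+1}+\frac{x_0-y_t}{t+2},
\]
whose norm is at most $\frac{2RH_{t+1}+2R}{t+2}\le\frac{2R(1+H_{t+1})}{t+1}$.

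None of these steps is a serious obstacle. The only delicate point is the coefficient bookkeeping in Step 1, on which Steps 2 and 3 both depend, so that computation is the one to verify carefully.
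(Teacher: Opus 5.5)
Your proof is correct and follows essentially the same route as the paper: you subtract consecutive Halpern updates to get the identity, then use nonexpansiveness together with $\norm{y_{t-2}-x_0}\le 2R$ to obtain $(t+1)\norm{b_t}\le t\norm{b_{t-1}}+2R/t$ and induct, and finally subtract $a_{t-1}$ to bound $c_t$. The only cosmetic difference is in the residual step, where you write $x_t-y_t=-b_{t+1}+(x_0-y_t)/(t+2)$ instead of the paper's $(t+1)(x_t-y_t)=x_0-x_t-(t+2)b_{t+1}$; your version even gives the slightly sharper denominator $t+2$ before you relax it to $t+1$.
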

\begin{proof}
Subtract two consecutive Halpern updates to obtain \eqref{eq:b-identity}. We have $\norm{b_1}\le R$ and $\norm{y_t-x_0}\le2R$. Nonexpansiveness therefore gives
\[
 (t+1)\norm{b_t}\le t\norm{b_{t-1}}+\frac{2R}t.
\]
Induction proves the bound on $b_t$, and nonexpansiveness bounds $a_t$. Subtract $a_{t-1}$ from \eqref{eq:b-identity} to get \eqref{eq:c-bound}. Finally the update also gives
\[
 (t+1)(x_t-y_t)=x_0-x_t-(t+2)b_{t+1}.
\]
The first-difference bound and $\norm{x_0-x_t}\le2R$ prove the residual estimate. Its formula also holds for $t=0$ with the evident direct bound.
\end{proof}

\begin{lemma}[Blockwise squared second-difference estimate]
\label{lem:second-energy}
For every integer $k\ge2$ and $k\le\ell\le2k-1$,
\begin{equation}
 \boxed{\sum_{t=k}^{\ell}\norm{y_t-2y_{t-1}+y_{t-2}}^2
 \le64\,\frac{R^2(1+H_{2k})^2}{k^2}.}
 \label{eq:second-energy}
\end{equation}
\end{lemma}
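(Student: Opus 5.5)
The plan is to rewrite the second difference as $y_t-2y_{t-1}+y_{t-2}=a_t-a_{t-1}$ with $a_t=y_t-y_{t-1}$. I will then get a one-step inequality from firm nonexpansiveness, which telescopes in $\norm{a_t}^2$ up to small error terms controlled by Lemma~\ref{lem:firstdiff}. Since $k\ge2$, every $t$ in the block satisfies $t\ge2$, so the identity \eqref{eq:b-identity} and the bound \eqref{eq:c-bound} are available throughout.

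\textbf{One-step inequality.} Apply \eqref{eq:firm} to the pair $x_t,x_{t-1}$. Since $P(x_t)-P(x_{t-1})=a_t$ and $x_t-x_{t-1}=b_t$, this gives
\[
\norm{a_t}^2+\norm{b_t-a_t}^2\le\norm{b_t}^2 .
\]
Next write $b_t=a_{t-1}+c_t$, which is the definition of $c_t$ in Lemma~\ref{lem:firstdiff}. Then
\[
a_t-a_{t-1}=(a_t-b_t)+c_t,
\qquad
\norm{a_t-a_{t-1}}^2\le2\norm{b_t-a_t}^2+2\norm{c_t}^2 .
\]
Substituting the firm-nonexpansiveness bound for $\norm{b_t-a_t}^2$ and expanding $\norm{a_{t-1}+c_t}^2$ yields
\[
\norm{a_t-a_{t-1}}^2\le2\bigl(\norm{a_{t-1}}^2-\norm{a_t}^2\bigr)+4\norm{a_{t-1}}\norm{c_t}+4\norm{c_t}^2 .
\]

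\textbf{Summation over the block.} Sum this inequality for $t=k,\ldots,\ell$, where $\ell\le2k-1$.
\begin{itemize}
\item The first term telescopes to at most $2\norm{a_{k-1}}^2$. By \eqref{eq:firstdiff}, $\norm{a_{k-1}}\le2RH_{k-1}/k$, so this contribution is at most $8R^2H_{2k}^2/k^2$.
\item For the cross term, combine \eqref{eq:firstdiff} and \eqref{eq:c-bound}. They give $\norm{a_{t-1}}\le2RH_{t-1}/t$ and $\norm{c_t}\le2R(1+H_{t-1})/t^2$. Hence each summand $4\norm{a_{t-1}}\norm{c_t}$ is at most $16R^2(1+H_{2k})^2/t^3$.
\item The remaining summand $4\norm{c_t}^2$ is at most $16R^2(1+H_{2k})^2/t^4$.
\end{itemize}
The block has at most $k$ indices, each with $t\ge k$. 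Therefore $\sum_{t=k}^{2k-1}t^{-3}\le k^{-2}$ and $\sum_{t=k}^{2k-1}t^{-4}\le k^{-3}\le k^{-2}/2$. Adding the three contributions gives a bound of at most $(8+16+8)R^2(1+H_{2k})^2/k^2$. This is within the claimed constant $64$.

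\textbf{Main obstacle.} The only delicate point is to avoid bounding $\norm{a_t-a_{t-1}}$ termwise by $\norm{a_t}+\norm{a_{t-1}}$. That route would only give $O(R^2/k)$ for the block sum. The gain to $k^{-2}$ comes from two sources. First, firm nonexpansiveness gives the telescoping structure in $\norm{a_t}^2$. Second, the exact identity \eqref{eq:b-identity} makes the perturbation $c_t$ of order $t^{-2}$ rather than $t^{-1}$. With both in place, the remaining work is bookkeeping of constants and harmonic factors. That bookkeeping uses the monotonicity $H_{t-1}\le H_{2k}$ for all $t$ in the block.
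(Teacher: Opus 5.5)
Your proof is correct and follows essentially the paper's argument: firm nonexpansiveness on $(x_t,x_{t-1})$ with $b_t=a_{t-1}+c_t$, telescoping of $\norm{a_t}^2$, the bounds from Lemma~\ref{lem:firstdiff}, and the split $a_t-a_{t-1}=c_t-(b_t-a_t)$. The only difference is that you apply $\norm{u+v}^2\le2\norm{u}^2+2\norm{v}^2$ at each step before summing, whereas the paper sums the firm-nonexpansiveness inequality first; both routes give the constant $32$, within the stated $64$.
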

\begin{proof}
Apply \eqref{eq:firm} to $(x_t,x_{t-1})$ and put $e_t=b_t-a_t$. Then
\[
 \norm{a_t}^2+\norm{e_t}^2\le\norm{b_t}^2
 =\norm{a_{t-1}+c_t}^2.
\]
Sum from $k$ to $\ell$. The $a_t$ terms telescope, giving
\begin{equation}
 \sum_{t=k}^{\ell}\norm{e_t}^2
 \le\norm{a_{k-1}}^2+
 \sum_{t=k}^{\ell}\bigl(2\norm{a_{t-1}}\norm{c_t}+\norm{c_t}^2\bigr).
 \label{eq:firm-telescope}
\end{equation}
Let $h=1+H_{2k}$. By Lemma~\ref{lem:firstdiff}, over this block,
\[
 \norm{a_{t-1}}\le\frac{2Rh}{t},\qquad
 \norm{c_t}\le\frac{2Rh}{t^2},\qquad
 \norm{a_{k-1}}\le\frac{2Rh}{k}.
\]
There are at most $k$ summands. Thus the right side of \eqref{eq:firm-telescope} is at most
\[
 \frac{4R^2h^2}{k^2}+\frac{8R^2h^2}{k^2}
       +\frac{4R^2h^2}{k^3}
 \le\frac{14R^2h^2}{k^2}.
\]
Since $a_t-a_{t-1}=c_t-e_t$,
\[
 \sum_{t=k}^{\ell}\norm{a_t-a_{t-1}}^2
 \le2\sum_{t=k}^{\ell}\norm{e_t}^2+2\sum_{t=k}^{\ell}\norm{c_t}^2
 \le\frac{32R^2h^2}{k^2}.
\]
This implies the displayed, more conservative constant $64$.
\end{proof}

\begin{remark}[An aggregate gain, not a pointwise acceleration claim]
The bound gives root-mean-square prediction error $O(R\log k/k^{3/2})$ on a block of $k$ iterates. It does \emph{not} assert that every second difference is this small. Firm nonexpansiveness, not mere nonexpansiveness, supplies the dissipative term in \eqref{eq:firm-telescope}. The map $P$ is fixed throughout the orbit.
\end{remark}

\section{Extrapolated inexact Halpern iteration}
\label{sec:inexact}
For $A=F+N_{\mathcal X}$ and a fixed $\eta>0$, define
\[
 P_\eta=(I+\eta A)^{-1}.
\]
For every center $x\in\R^d$, the continuous regularized operator
$z\mapsto F(z)+(z-x)/\eta$ has a VI solution on $\mathcal X$ by the
same compact fixed-point argument as in Section~\ref{sec:main}.
It is $1/\eta$-strongly monotone; comparing two solutions in the
VI inequalities proves uniqueness. Hence $P_\eta$ is well defined
as a map $\R^d\to\mathcal X$.
If $u=P_\eta(x)$ and $v=P_\eta(y)$, their defining inclusions and
monotonicity of $F+N_{\mathcal X}$ give
\[
 \ip{(x-u)-(y-v)}{u-v}\ge0.
\]
Expanding $\norm{x-y}^2$ as the square of
$(u-v)+((x-u)-(y-v))$ proves \eqref{eq:firm}.
Also $P_\eta(z^\star)=z^\star$ precisely when $z^\star$ is a VI
solution. The resolvent is used to analyze the algorithm; it is not
available as a free oracle operation.

Fix a supplied distance bound $R\le D$, target $\eps$, and integer $K\ge2$. Set
\begin{equation}
 e=\frac{R}{(K+2)^4},\qquad
 \eta=\frac{8R(1+H_{K+1})}{\eps(K+1)},\qquad
 \delta=\frac e\eta.
 \label{eq:outer-params}
\end{equation}
\textbf{The stepsize $\eta$ is kept constant during this complete outer run.}

Starting from $x_0$, for $t=0,1,\ldots,K$ do the following. Use the feasible warm start
\begin{equation}
 w_0=x_0,\qquad w_1=\widehat y_0,\qquad
 w_t=\Proj(2\widehat y_{t-1}-\widehat y_{t-2})\quad(t\ge2).
 \label{eq:predictor}
\end{equation}
Define the strongly monotone operator
\begin{equation}
 G_t(z)=F(z)+\frac{z-x_t}{\eta},\qquad \mu=\frac1\eta.
 \label{eq:inner-G}
\end{equation}
Invoke an already constructed solver $\mathcal S_{\gamma,b}$ at $(G_t,w_t,\mu,\delta)$. It returns $(\widehat y_t,n_t)$ with
\begin{equation}
 n_t\in N_{\mathcal X}(\widehat y_t),\qquad
 v_t=G_t(\widehat y_t)+n_t,\qquad \norm{v_t}\le\delta.
 \label{eq:inner-certificate}
\end{equation}
For $t<K$, make the ordinary Halpern update
\begin{equation}
 x_{t+1}=\frac{x_0+(t+1)\widehat y_t}{t+2}.
 \label{eq:actual-halpern}
\end{equation}
Return $(\widehat y_K,n_K)$, not the bare Halpern point $x_K$.
All inner original-oracle calls count. One jet of $G_t$ is obtained from one jet of $F$ plus known affine data. Since $p\ge2$, this affine addition does not increase $L_p$.

\begin{algorithm}[tbp]
\caption{\textsc{ExtrapolatedHalpern}$(\mathcal S_{\gamma,b},F,x_0,R,\eps)$}
\label{alg:eh}
\begin{algorithmic}[1]
\State $\alpha\gets\gamma/(1+\gamma)$; $K\gets\max\{2,\lceil(L_pR^p/\eps)^\alpha\rceil\}$.
\State Choose $e,\eta,\delta$ by \eqref{eq:outer-params}; keep $\eta$ fixed.
\State Install the interruptible original-query cap $\mathsf B$ in \eqref{eq:explicit-cap}.
\For{$t=0,\ldots,K$}
 \State Form $w_t$ using \eqref{eq:predictor} and $G_t(z)=F(z)+(z-x_t)/\eta$.
 \State Obtain $(\widehat y_t,n_t,G_t(\widehat y_t))$ from $\mathcal S_{\gamma,b}(G_t,w_t,1/\eta,\delta)$.
 \If{the active cap is exhausted} \State \Return $\Fail$. \EndIf
 \If{$t<K$} \State $x_{t+1}\gets[x_0+(t+1)\widehat y_t]/(t+2)$. \EndIf
\EndFor
\State Compute $F(\widehat y_K)=G_K(\widehat y_K)-(\widehat y_K-x_K)/\eta$ from saved data.
\If{$\norm{F(\widehat y_K)+n_K}\le\eps$}
 \State \Return $(\widehat y_K,n_K,F(\widehat y_K))$.
\EndIf
\State \Return $\Fail$.
\end{algorithmic}
\end{algorithm}
The cap is implemented as a shared query counter through every descendant call. The indicated exhaustion test describes immediate interruption, not a delayed test after the subroutine returns. No additional query is required to transform the saved $G_K$ value into the final $F$ value.

\subsection{Stability against an exact shadow orbit}
Let $(\bar x_t,\bar y_t)$ be the exact orbit \eqref{eq:exact-halpern} for the same $P_\eta$ and initial point. A VI solution within distance $R$ of $x_0$ is a fixed point, so Section~\ref{sec:energy} applies.
From strong monotonicity and \eqref{eq:inner-certificate},
\begin{equation}
 \norm{\widehat y_t-P_\eta(x_t)}\le\eta\delta=e.
 \label{eq:prox-error}
\end{equation}
Induction using nonexpansiveness yields
\begin{equation}
 \norm{x_t-\bar x_t}\le\frac{te}2,\qquad
 \norm{\widehat y_t-\bar y_t}\le\frac{(t+2)e}2.
 \label{eq:shadow}
\end{equation}
Indeed, if the first bound holds at $t$, the next difference is at most
$\frac{t+1}{t+2}(te/2+e)=(t+1)e/2$.

\begin{lemma}[True warm-start distances inherit the energy bound]
\label{lem:warm-energy}
Let $r_t=\norm{w_t-P_\eta(x_t)}$, an analysis quantity not supplied to the algorithm. Then $r_0,r_1\le3R$, and for $t\ge2$,
\begin{equation}
 r_t\le\norm{\bar y_t-2\bar y_{t-1}+\bar y_{t-2}}+(2t+1)e.
 \label{eq:warm-comparison}
\end{equation}
In particular, for $k\ge2$ and $k\le\ell\le\min\{2k-1,K\}$,
\begin{equation}
 \sum_{t=k}^{\ell}r_t^2\le C\frac{R^2(1+H_{2k})^2}{k^2}
 \label{eq:actual-warm-energy}
\end{equation}
with the universal choice $C=256$.
\end{lemma}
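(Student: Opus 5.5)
The plan is a direct comparison between the inexact iterates and the exact shadow orbit, followed by an application of Lemma~\ref{lem:second-energy}.

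\textbf{Initial cases.} For $t=0$ we have $w_0=x_0$ and $P_\eta(x_0)=\bar y_0$. Both lie within $R$ of $z^\star$, since $\norm{x_0-z^\star}\le R$ and $P_\eta$ is nonexpansive with fixed point $z^\star$. Hence $r_0\le 2R\le 3R$.

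For $t=1$, write $w_1=\widehat y_0$. Then $r_1\le\norm{\widehat y_0-z^\star}+\norm{P_\eta(x_1)-z^\star}$. The first term is at most $\norm{\bar y_0-z^\star}+e\le R+e$ by \eqref{eq:shadow}. The second is at most $\norm{x_1-z^\star}\le R+e/2$, because $\bar x_1$ is within $R$ of $z^\star$ and \eqref{eq:shadow} again applies. Since $e=R/(K+2)^4\le R/256$, we get $r_1\le 3R$.

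\textbf{Comparison for $t\ge2$.} Since $2\widehat y_{t-1}-\widehat y_{t-2}$ may be infeasible, first use that $\Proj$ is nonexpansive and $P_\eta(x_t)\in\mathcal X$ is fixed by $\Proj$. This gives
\[
 r_t\le\norm{2\widehat y_{t-1}-\widehat y_{t-2}-P_\eta(x_t)}.
\]
Next insert the shadow quantities. Write $P_\eta(x_t)=\bar y_t+(P_\eta(x_t)-P_\eta(\bar x_t))$ and $\widehat y_s=\bar y_s+(\widehat y_s-\bar y_s)$, then apply the triangle inequality. By nonexpansiveness and \eqref{eq:shadow}, $\norm{P_\eta(x_t)-\bar y_t}\le\norm{x_t-\bar x_t}\le te/2$. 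Also $2\norm{\widehat y_{t-1}-\bar y_{t-1}}\le(t+1)e$ and $\norm{\widehat y_{t-2}-\bar y_{t-2}}\le te/2$. These three errors total $(2t+1)e$, which proves \eqref{eq:warm-comparison}.

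\textbf{Summing over a block.} Square \eqref{eq:warm-comparison} using $(a+b)^2\le2a^2+2b^2$, and sum over $k\le t\le\ell$. The first part is bounded by $2\cdot64\,R^2(1+H_{2k})^2/k^2$ from Lemma~\ref{lem:second-energy}, applied to the exact orbit $\bar y_t$. That lemma requires $\ell\le2k-1$, which the hypothesis provides, and $\bar x_0=x_0$ lies within $R$ of the fixed point $z^\star$.

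The error part is at most $2\sum_{t\le K}(2t+1)^2e^2\le 2k(2K+1)^2R^2/(K+2)^8$. Because $k\le\ell\le K$, this is at most $8R^2/(K+2)^5\le 8R^2/k^2$, which is absorbed using $(1+H_{2k})^2\ge1$. The total is at most $(128+8)R^2(1+H_{2k})^2/k^2\le256\,R^2(1+H_{2k})^2/k^2$.

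The main point needing care is the projection step: the comparison must be made against the feasible target $P_\eta(x_t)$, so that nonexpansiveness of $\Proj$ can be applied at all. The remaining steps are routine bookkeeping of the constants.
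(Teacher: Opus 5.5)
Your proof is correct and follows the paper's argument essentially step for step. You use nonexpansiveness of $\Pi_{\mathcal X}$ against the feasible target $P_\eta(x_t)$, insert the shadow orbit with the same $(2t+1)e$ error accounting, and then square and apply Lemma~\ref{lem:second-energy}, absorbing the error term (with constant $8$ where the paper uses $50$). Your route to $r_1\le 3R$ via $z^\star$ differs from the paper's $r_1\le e+\norm{x_1-x_0}$ but is equally valid.

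One notational slip should be fixed. The error sum should be written over the block $t=k,\ldots,\ell$ (at most $k$ terms, each with $t\le K$), not over all $t\le K$. The block form is what your bound $2k(2K+1)^2e^2$ actually uses.
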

\begin{proof}
Because $P_\eta(x_t)\in\mathcal X$, projection in \eqref{eq:predictor} cannot increase the distance to it. Add and subtract exact shadow points and use \eqref{eq:shadow} and nonexpansiveness. The accumulated error is at most
\[
 2\frac{(t+1)e}{2}+\frac{te}{2}+\frac{te}{2}=(2t+1)e,
\]
which proves \eqref{eq:warm-comparison}. For $t=0$, nonexpansiveness around a fixed point gives $r_0\le2R$. For $t=1$, use $r_1\le e+\norm{x_1-x_0}\le R+3e/2\le3R$.

Square \eqref{eq:warm-comparison}, sum, and apply Lemma~\ref{lem:second-energy}. The additional term is at most $50k^3e^2$. Since $k\le K$ and $e=R/(K+2)^4$, it is at most $50R^2/k^2$, so it is absorbed in \eqref{eq:actual-warm-energy}. More explicitly, Lemma~\ref{lem:second-energy} and $(a+b)^2\le2a^2+2b^2$ give the constant $128+50\le256$.
\end{proof}

\begin{lemma}[The actual returned point has a tangent certificate]
\label{lem:final-cert}
Suppose the distance promise is valid and all $K+1$ inner calls
complete with the certificates in \eqref{eq:inner-certificate}.
Then their final point and normal satisfy
$\norm{F(\widehat y_K)+n_K}\le\eps$.
The noninterruption of a valid budget-capped run is established
in Theorem~\ref{thm:boost}.
\end{lemma}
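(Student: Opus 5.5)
We relate the final certificate to the resolvent residual of the exact shadow orbit. Write $F(\widehat y_K)+n_K=v_K-(\widehat y_K-x_K)/\eta$, using \eqref{eq:inner-G} and \eqref{eq:inner-certificate}. The triangle inequality then gives
\[
 \norm{F(\widehat y_K)+n_K}\le\delta+\frac{\norm{\widehat y_K-x_K}}{\eta}.
\]
It therefore suffices to bound $\norm{\widehat y_K-x_K}$ by roughly $\eta\eps/2$. For this we compare with the exact orbit $(\bar x_t,\bar y_t)$ of $P_\eta$ from $x_0$. Because the promise is valid, some solution $z^\star$ with $\norm{x_0-z^\star}\le R$ is a fixed point of $P_\eta$. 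Hence Lemma~\ref{lem:firstdiff} applies to the shadow orbit and gives
\[
 \norm{\bar x_K-\bar y_K}\le\frac{2R(1+H_{K+1})}{K+1}.
\]

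Next we add the inexactness. By \eqref{eq:shadow},
\[
 \norm{\widehat y_K-x_K}\le\norm{\bar y_K-\bar x_K}+\frac{(K+2)e}{2}+\frac{Ke}{2}=\norm{\bar y_K-\bar x_K}+(K+1)e.
\]
With the parameters \eqref{eq:outer-params}, this yields
\[
 \frac{\norm{\widehat y_K-x_K}}{\eta}\le\frac{2R(1+H_{K+1})}{(K+1)\eta}+\frac{(K+1)e}{\eta}=\frac{\eps}{4}+\frac{(K+1)e}{\eta}.
\]
Also $\delta=e/\eta$. The remaining terms combine to $(K+2)e/\eta=(K+2)\delta$. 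Substituting $e=R/(K+2)^4$ and the value of $\eta$ gives
\[
 (K+2)\frac{e}{\eta}=\frac{\eps(K+1)}{8(K+2)^3(1+H_{K+1})}\le\frac{\eps}{8}.
\]
Altogether the final residual is at most $\eps/4+\eps/8<\eps$.

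The only delicate point is applying \eqref{eq:shadow} at the index $K$. The inner calls are run for $t=0,\dots,K$ but the Halpern update only for $t<K$, so $x_K$ is well defined. We must check that the induction giving \eqref{eq:shadow} uses \eqref{eq:prox-error} exactly at the indices $0,\dots,K$. That estimate holds because each inner certificate has $\norm{v_t}\le\delta$, and $G_t$ is $1/\eta$-strongly monotone, so by \eqref{eq:strong-certificate-distance} we get $\norm{\widehat y_t-P_\eta(x_t)}\le\eta\delta$. Beyond this bookkeeping the argument is routine constant-chasing. The budget-cap issue is deferred, as the statement indicates, to Theorem~\ref{thm:boost}.
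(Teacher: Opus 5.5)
Your proposal is correct and matches the paper's proof essentially line by line. Both arguments use the identity $F(\widehat y_K)+n_K=v_K-(\widehat y_K-x_K)/\eta$, the shadow-orbit bounds \eqref{eq:shadow}, and the residual estimate of Lemma~\ref{lem:firstdiff} to reach the same bound $\eta^{-1}\bigl[2R(1+H_{K+1})/(K+1)+(K+2)e\bigr]$; your evaluation of the second term as at most $\eps/8$ is just a slightly sharper form of the paper's ``less than $\eps/4$''.
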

\begin{proof}
The returned normal belongs to $N_{\mathcal X}(\widehat y_K)$, and
\[
 F(\widehat y_K)+n_K=v_K-\frac{\widehat y_K-x_K}{\eta}.
\]
Combining \eqref{eq:shadow}, \eqref{eq:inner-certificate}, and Lemma~\ref{lem:firstdiff} gives
\begin{equation}
 \norm{F(\widehat y_K)+n_K}
 \le\frac1\eta\left[\frac{2R(1+H_{K+1})}{K+1}+(K+2)e\right].
 \label{eq:output-bound}
\end{equation}
With \eqref{eq:outer-params} the first term is $\eps/4$ and the second is less than $\eps/4$. This proves the claim with slack. The last inner solve has already been counted; no exact proximal recovery is omitted.
\end{proof}

\section{One-step exponent improvement and logarithmic accounting}
\label{sec:boost}
The upper procedure must be finite even if its supplied radius is invalid, because it will itself be used in radius-guessing trials. The following cap makes that requirement operational. For $a>0$ and $K\ge2$, define the known quantity
\begin{equation}
 U_a(K)=2\cdot3^a+16^a\sum_{j=1}^{\lfloor\log_2 K\rfloor}
       (1+H_{2^{j+1}})^a\,2^{j(1-3a/2)}.
 \label{eq:Ua}
\end{equation}
If the strong-solver interface has leading constant $C_{\mathcal S}$, install the cap
\begin{equation}
 \mathsf B=
 \left\lceil C_{\mathcal S}\LS(1/\eta,e/\eta)^b
 \left[K+1+(\eta L_p)^\gamma R^a U_a(K)\right]\right\rceil+1,
 \qquad a=(p-1)\gamma.
 \label{eq:explicit-cap}
\end{equation}
All quantities in this expression are supplied parameters or finite sums of known scalars. In particular, no unknown warm-start distance appears. The proof below shows that a valid run finishes within this cap. A false radius promise can only cause a checked failure. Formula~\eqref{eq:explicit-cap} also specifies a concrete choice of cap without access to an unknown optimal complexity.

\begin{theorem}[Extrapolated Halpern exponent transformation]
\label{thm:boost}
Suppose $\mathcal S_{\gamma,b}$ exists, with $0<\gamma\le1$, and
\begin{equation}
 a=(p-1)\gamma\le2,\qquad \frac32a\ge1.
 \label{eq:boost-range}
\end{equation}
Then the construction above, with
\begin{equation}
 \alpha=\frac{\gamma}{1+\gamma},\qquad
 K=\max\{2,\lceil Q_R^\alpha\rceil\},
 \label{eq:K-choice}
\end{equation}
produces $\mathcal A_{\alpha,b+4}$.
\end{theorem}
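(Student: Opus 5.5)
The proof has three parts: certification under a false promise, termination of a valid run within the cap $\mathsf B$, and the resulting query count. Certification is immediate. Every output of Algorithm~\ref{alg:eh} passes the explicit test $\norm{F(\widehat y_K)+n_K}\le\eps$. Its normal $n_K$ comes from the inner certified solver, so it lies in $N_{\mathcal X}(\widehat y_K)$ whether or not the radius promise holds. The shared counter enforces the cap $\mathsf B$ before every original query. Hence a false promise can only produce $\Fail$, and it does so within $\mathsf B$ queries.

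Now assume the promise is valid. Each inner call $\mathcal S_{\gamma,b}(G_t,w_t,1/\eta,\delta)$ costs at most
\[
C_{\mathcal S}\bigl[1+(\eta L_p r_t^{p-1})^\gamma\bigr]\LS(1/\eta,\delta)^b ,
\]
where $r_t=\norm{w_t-P_\eta(x_t)}$ is exactly the true warm-start distance of Lemma~\ref{lem:warm-energy}. Here $\delta=e/\eta$, and $G_t$ inherits the Lipschitz bound $L_p$ because $p\ge2$. Summing over $t=0,\dots,K$ gives $K+1$ plus $(\eta L_p)^\gamma\sum_t r_t^{a}$ with $a=(p-1)\gamma$. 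I would split this sum dyadically into blocks $[2^j,2^{j+1}-1]\cap[2,K]$, treating $t=0,1$ separately through $r_0,r_1\le3R$; these two terms give the $2\cdot3^a R^a$ contribution. On each block, H\"older's inequality with exponent $2/a\ge1$ (this uses $a\le2$) combined with \eqref{eq:actual-warm-energy} gives
\[
\sum_{t\in\text{block}} r_t^a\le (2^j)^{1-a/2}\Bigl(\sum r_t^2\Bigr)^{a/2}\le 16^a R^a(1+H_{2^{j+1}})^a 2^{j(1-3a/2)},
\]
using $C=256=16^2$. This reproduces $U_a(K)$ term by term, so the valid run stays within the cap \eqref{eq:explicit-cap}. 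With all $K+1$ certificates obtained, Lemma~\ref{lem:final-cert} shows that the final test succeeds.

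It remains to turn $\mathsf B$ into the bound \eqref{eq:A-interface}. Since $\tfrac32a\ge1$, each exponent $2^{j(1-3a/2)}$ is at most $1$. The sum defining $U_a(K)$ has $O(\log K)$ terms, each $O_p((\log K)^a)\le O_p(\log^2 K)$. Thus $U_a(K)=O_p(\log^3K)$; in the borderline case $a=2/3$ the terms do not decay, so this bound cannot be improved to $O_p(1)$. From \eqref{eq:outer-params}, $\eta\asymp R\log K/(\eps K)$, so
\[
(\eta L_p)^\gamma R^a=(\eta L_pR^{p-1})^\gamma\asymp \bigl(Q_R\log K/K\bigr)^\gamma .
\]
The choice $K\asymp Q_R^{\alpha}$ with $\alpha=\gamma/(1+\gamma)$ gives $(Q_R/K)^\gamma\asymp Q_R^{\gamma/(1+\gamma)}\asymp K$. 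The total is therefore $O_p(Q_R^\alpha)$ times polylogarithmic factors: $(\log K)^{\gamma}\le\log K$, then $\log^3K$ from $U_a$, then $\LS^b$. The additive $1$ in the interface absorbs the case $K=2$.

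The main obstacle is showing $\LS(1/\eta,e/\eta)=O_p(\LA(R,\eps))$ and keeping the total extra logarithmic power at most $4$. The quantities inside $\LS$ are $\kappa_D=\eta L_pD^{p-1}$, $L_pD^p\eta/e$ and $D/e$. Each is polynomial in $D/R$, $Q_R$ and $K$, which uses $e=R/(K+2)^4$ and $\log K=O(\LA)$. Their logarithms are therefore $O(\LA)$. Combined with the factor $(\log K)^{\gamma}\cdot\log^3K\le\LA^4$, this yields the exponent $b+4$ and proves $\mathcal A_{\alpha,b+4}$.
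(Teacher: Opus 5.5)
Your proposal is correct and follows essentially the same route as the paper. The dyadic power-mean/H\"older bound from Lemma~\ref{lem:warm-energy} reproduces $U_a(K)$, so a valid run stays inside $\mathsf B$ and Lemma~\ref{lem:final-cert} certifies its output. The $\mathcal L_{\mathrm S}$ logarithms are controlled by $\mathcal L_{\mathrm A}$ through $\eta L_pD^{p-1}$, $L_pD^p\eta/e$ and $D/e$, and the balance $\gamma(1-\alpha)=\alpha$ gives the exponent, with the extra fourth log power coming from $(1+H_{K+1})^\gamma\le 1+H_{K+1}$ and $U_a(K)\le C(1+\log K)^{a+1}$.

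The differences are cosmetic. You bound the cap $\mathsf B$ directly rather than first stating \eqref{eq:sum-power}. Also, your phrase that a false promise ``can only produce \Fail'' should be read as ``can never produce an unverified success'', since such a run may still pass the final check.
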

\begin{proof}
Consider first the run without its outer cap. Every $G_t$ is
$1/\eta$-strongly monotone on the same domain, so the strong-solver
interface guarantees completion of every inner call. Feasibility
follows from projection and convex combinations, and its final
certificate is accurate by Lemma~\ref{lem:final-cert}.
We now bound the total unknown-operator queries of this run, including
those used to discover adequate warm-start radii. The resulting bound
will show that the installed cap never interrupts a valid run.

For $0<a\le2$, the finite-dimensional power-mean inequality and \eqref{eq:actual-warm-energy} imply, on a dyadic block,
\begin{align}
 \sum_{t=k}^{\ell}r_t^a
 &\le k^{1-a/2}\left(\sum_{t=k}^{\ell}r_t^2\right)^{a/2}\notag\\
 &\le C_pR^a(1+H_{2k})^a k^{1-3a/2}.
 \label{eq:block-power}
\end{align}
With the constant $256$ in \eqref{eq:actual-warm-energy}, \eqref{eq:block-power} has coefficient $16^a$. Including the two initial distances and summing the actual dyadic blocks therefore proves
\[
 \sum_{t=0}^{K}r_t^a\le R^a U_a(K).
\]
Together with the strong-solver interface, this proves that the cap \eqref{eq:explicit-cap} does not interrupt a valid run. By \eqref{eq:boost-range}, the power of $k$ is nonpositive. Sum over $k=2,4,8,\ldots$, include the two initial distances, and allow an extra logarithm at equality. This gives
\begin{equation}
 \sum_{t=0}^K r_t^a\le C_p R^a[1+\log(K+2)]^{a+1}.
 \label{eq:sum-power}
\end{equation}
The true-distance-sensitive strong-solver interface now yields
\begin{equation}
 N\le C_p\LS(1/\eta,e/\eta)^b
 \left[K+1+(\eta L_p)^\gamma\sum_{t=0}^K r_t^{(p-1)\gamma}\right].
 \label{eq:cost-pre}
\end{equation}
This expression includes the $t=0$ and $t=1$ startup solves. There is no claim that each inner problem has constant cost.

Write $h=1+H_{K+1}$. The dimensionless terms in the strong-solver logarithm are exactly
\begin{align*}
 \eta L_pD^{p-1}&=\frac{8hQ_R}{K+1}\left(\frac DR\right)^{p-1},\\
 \frac{L_pD^p}{e/\eta}&=\frac{8hQ_R(K+2)^4}{K+1}\left(\frac DR\right)^p,\\
 \frac{(1/\eta)D}{e/\eta}&=\frac DR(K+2)^4.
\end{align*}
Since $K$ in \eqref{eq:K-choice} is polynomially bounded in $1+Q_R$, these identities imply
\begin{equation}
 \LS(1/\eta,e/\eta)\le C_p\LA(R,\eps),
 \qquad 1+\log(K+2)\le C_p\LA(R,\eps).
 \label{eq:log-control}
\end{equation}
Also $\eta L_pR^{p-1}=8hQ_R/(K+1)$. Substituting \eqref{eq:sum-power} and \eqref{eq:log-control} into \eqref{eq:cost-pre}, and using $a\le2$, $\gamma\le1$, gives
\begin{equation}
 N\le C_p\LA(R,\eps)^{b+4}
       \left[K+1+\left(\frac{Q_R}{K+1}\right)^\gamma\right].
 \label{eq:boost-cost}
\end{equation}
The choice \eqref{eq:K-choice} balances the two powers: $\gamma(1-\alpha)=\alpha$. Hence \eqref{eq:A-interface} holds with $\alpha=\gamma/(1+\gamma)$ and logarithmic power $b+4$.

The explicit cap \eqref{eq:explicit-cap} satisfies the same bound because it uses precisely the block majorant whose logarithms were estimated above. Under a valid promise it does not interfere, and under a false promise it bounds every descendant's contribution to the enclosing call. The final certificate is accepted only after its norm is verified. This supplies the budget-capped interface required by Lemma~\ref{lem:restart}.
\end{proof}

\section{Finite acceleration hierarchy: proof of the upper bound}
\label{sec:bootstrap}
\begin{proof}[Proof of the upper bound in Theorem~\ref{thm:main}]
Proposition~\ref{prop:base-strong} constructs a base strong solver with
\[
 \gamma_0=\frac2{p+1},\qquad b_0^{\rm S}=2.
\]
For $j=1,\ldots,p-1$, apply Theorem~\ref{thm:boost} to the already constructed strong solver, and, except at the last level, apply Lemma~\ref{lem:restart} to the resulting residual solver. Algebra gives
\begin{equation}
 \gamma_j=\frac{\gamma_{j-1}}{1+\gamma_{j-1}}
          =\frac2{p+1+2j},\qquad
 b_j^{\rm A}=6j,\qquad b_j^{\rm S}=6j+2.
 \label{eq:gamma-recursion}
\end{equation}
The strong exponent at the input to the $j$th boost satisfies
\[
 (p-1)\gamma_{j-1}=\frac{2(p-1)}{p+2j-1}\le2,
 \qquad
 \frac32(p-1)\gamma_{j-1}=\frac{3(p-1)}{p+2j-1}\ge1
\]
for every $1\le j\le p-1$. Equality in the second inequality occurs at the final level. Thus every use of the boost is within its proved range.

After exactly $p-1$ levels,
\begin{equation}
 \gamma_{p-1}=\frac2{3p-1},\qquad b_{p-1}^{\rm A}=6(p-1).
 \label{eq:final-alpha}
\end{equation}
Take $R=D$ in the resulting residual solver. Then $\LA(D,\eps)=1+\log(3+Q)$, proving the final upper bound in \eqref{eq:matching}.

This is finite induction, not an invocation of the unknown optimal solver inside itself: each new residual solver calls only a lower-level strong solver, and the base solver was constructed directly from Taylor models. Nested regularizations add only known affine terms. Every jet of any nested operator is obtained by one query to the original $F$, and all such calls are included in the inductive budgets. There is no uncounted stronger oracle.
\end{proof}

For illustration, the residual exponents produced in successive levels are
\[
\begin{array}{c|c|c}
p&\text{base strong exponent}&\text{successive residual exponents}\\\hline
2&2/3&2/5\\
3&1/2&1/3,\ 1/4\\
4&2/5&2/7,\ 2/9,\ 2/11\\
5&1/3&1/4,\ 1/5,\ 1/6,\ 1/7.
\end{array}
\]
For $p=2$, only one acceleration level is needed: projected linear prediction and a base strong solver of exponent $2/3$ yield the cost
\[
 \Ot\left(K+\left(\frac{L_2R^2}{\eps K}\right)^{2/3}\right),
\]
which is minimized at $K\asymp(L_2R^2/\eps)^{2/5}$.

\begin{corollary}[Distance-sensitive upper bound]
Under a valid supplied distance bound $0<R\le D$, the final residual solver satisfies \eqref{eq:distance-upper} and returns an evaluated normal certificate. Under an invalid distance bound it remains a finite capped procedure and never reports an unverified success.
\end{corollary}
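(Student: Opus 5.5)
The corollary combines two facts. The final residual solver $\mathcal A_{\alpha_p,6(p-1)}$ from the hierarchy obeys the residual-solver interface of Definition~\ref{def:interfaces}. And the budget caps installed at every level make the procedure finite and self-verifying regardless of the promise. So the plan is to read \eqref{eq:distance-upper} off the interface bound \eqref{eq:A-interface} at level $j=p-1$, and then to argue the invalid-promise behaviour separately from the cap structure.

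\textbf{Valid promise.} I would invoke the proof of the upper bound in Theorem~\ref{thm:main} without specializing $R=D$.

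\begin{enumerate}
\item By finite induction on \eqref{eq:gamma-recursion}, the last application of Theorem~\ref{thm:boost} yields a residual solver with exponent $\gamma_{p-1}=\alpha_p$ and logarithmic power $b_{p-1}^{\rm A}=6(p-1)$. Each boost is applied within the range \eqref{eq:boost-range}, and each intermediate restart uses Lemma~\ref{lem:restart}.
\item The interface \eqref{eq:A-interface} then gives $C_p(1+Q_R^{\alpha_p})\LA(R,\eps)^{6(p-1)}$ queries whenever some solution lies within $R$ of $x_0$.
\item By \eqref{eq:LA}, $Q_R=L_pR^p/\eps$ and $\LA(R,\eps)=1+\log(2+D/R+Q_R)$, so this expression is exactly \eqref{eq:distance-upper}.
\item The output is the triple $(\widehat y_K,n_K,F(\widehat y_K))$ of Algorithm~\ref{alg:eh}. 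The value $F(\widehat y_K)$ is recovered from saved data at no extra query. By Lemma~\ref{lem:final-cert}, $n_K\in N_{\mathcal X}(\widehat y_K)$ and the norm test passes.
\item Theorem~\ref{thm:boost} shows that the explicit cap \eqref{eq:explicit-cap} does not interrupt a valid run, so the algorithm reaches that final test.
\end{enumerate}

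\textbf{Invalid promise.} I would argue by induction over the levels that every constructed procedure is finite and never reports an unverified success.

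\begin{itemize}
\item \emph{Base level.} \textsc{BaseTrial} has a predetermined finite schedule, fixed by \eqref{eq:halving-count} and the halving loop. Its normals come from exact model solves, which do not use the promise. It returns only after an explicit norm check.
\item \emph{Restart step.} \textsc{AdaptiveStrong} tries finitely many radii, each capped by $D$. Each \textsc{RestartTrial} runs a bounded number of capped calls.
\item \emph{Boost step.} Algorithm~\ref{alg:eh} installs the finite cap $\mathsf B$. The cap is computed from supplied quantities only and is charged before every descendant query. The algorithm reaches at most $K+1$ inner calls plus a final check, and returns success only when $\norm{F(\widehat y_K)+n_K}\le\eps$ is verified on an actual normal.
\end{itemize}

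The main subtlety lies in the inner calls during an invalid run. The inner strong solvers are themselves guaranteed to finish, since the operators $G_t$ are genuinely strongly monotone whatever the outer promise. So the only risk is that their cost exceeds the analysis bound, and the shared cap is precisely what turns that risk into a checked failure. Because the cap is enforced by interruption rather than by a post-hoc test, the total number of queries is at most $\mathsf B$ in every case.
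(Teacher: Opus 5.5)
Your proposal is correct and follows the paper's route. The paper's proof simply reads the interface bound of $\mathcal A_{\gamma_{p-1},6(p-1)}$ at a general valid $R$ before specializing to $R=D$. It treats the invalid-promise behaviour as already part of the budget-capped residual-solver interface in Definition~\ref{def:interfaces}, whereas you unpack that behaviour by a level-by-level argument over the caps and the verified normals.
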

\begin{proof}
Use the interface $\mathcal A_{\gamma_{p-1},6(p-1)}$ before the substitution $R=D$ in the preceding proof.
\end{proof}

The hierarchy is algorithmically finite: construct $\mathcal S_0$ from Algorithms~\ref{alg:base-trial} and~\ref{alg:adaptive-strong}; define $\mathcal A_j$ by Algorithm~\ref{alg:eh} using $\mathcal S_{j-1}$; and, for $j<p-1$, define $\mathcal S_j$ by Algorithms~\ref{alg:restart-trial} and~\ref{alg:adaptive-strong}. Leading constants are attached to these proven interfaces in this order. The depth depends only on the fixed $p$, not on accuracy or dimension.

\begin{algorithm}[tbp]
\caption{\textsc{HigherOrderVI}: the final finite hierarchy}
\label{alg:main}
\begin{algorithmic}[1]
\Require Public data $(p,L_p,D,\mathcal X,x_0,\eps)$, with $p\ge2$.
\Ensure An evaluated certificate $(x,n,F(x))$ with
$n\in N_{\mathcal X}(x)$ and $\norm{F(x)+n}\le\eps$.
\State Instantiate $\mathcal S_0$ as \textsc{AdaptiveStrong} with
\textsc{BaseTrial}.
\For{$j=1,\ldots,p-1$}
 \State Define $\mathcal A_j$ as \textsc{ExtrapolatedHalpern} using
              the already defined $\mathcal S_{j-1}$.
 \If{$j<p-1$}
  \State Define $\mathcal S_j$ as \textsc{AdaptiveStrong} with
                  \textsc{RestartTrial} using $\mathcal A_j$.
 \EndIf
\EndFor
\State Run $\mathcal A_{p-1}(F,x_0,D,\eps)$ and return its certificate.
\end{algorithmic}
\end{algorithm}
In Algorithm~\ref{alg:main}, ``define'' means instantiate a callable
routine with the constants and interruptible query caps established
above, not solve an additional unknown problem. Every active ancestor
counter is charged before the next original-$F$ query. By the valid
promise $\norm{x_0-z^\star}\le D$, the final call cannot exhaust its
cap before obtaining its certificate. For a zero-dimensional affine
hull, use the single-point procedure in Appendix~\ref{app:affine}.

\section{A smooth lower-bound family}
\label{sec:lower}
The following construction uses the exact-flatness mechanism of smooth shift lower bounds~\cite{jangryu}, but normalizes both smoothness and domain diameter before the oracle argument. It will be used for arbitrary queries, not a prescribed tensor-step algorithm class.

\subsection{A gate with an exact information plateau}\label{sec:lower-gate}
Choose once and for all an even $C^\infty$ function $\rho:\R\to[0,1]$ satisfying
\[
\rho(u)=0\quad(|u|\le1/2),\qquad
\rho(u)=1\quad(|u|\ge1).
\]
For example, define $\eta(t)=e^{-1/t}$ for $t>0$ and $0$ otherwise, set
$\chi(t)=\eta(t)/(\eta(t)+\eta(1-t))$, and take
$\rho(u)=\chi((4u^2-1)/3)$. The denominator defining $\chi$ never vanishes.
Set
\begin{equation}
 B_p=\max\{1,\norm{\rho^{(p-1)}}_\infty\},\qquad
 h_\tau(s)=\tau\int_0^{s/\tau}\rho(u)\,du\quad(\tau>0).
\label{eq:gate}
\end{equation}

\begin{lemma}[Gate properties]
\label{lem:gate}
The function $h_\tau$ is odd and $C^\infty$. For every $s\in\R$,
\begin{equation}
0\le h_\tau'(s)\le1,\quad
|h_\tau(s)|\le|s|,\quad
|s-h_\tau(s)|\le\tau,\quad
h_\tau(s)\ge s-\tau.
\label{eq:gate-basic}
\end{equation}
It is identically zero on $[-\tau/2,\tau/2]$, and all its derivatives vanish there. Moreover,
\begin{equation}
\Lip(h_\tau^{(p-1)})\le B_p\tau^{1-p}.
\label{eq:gate-smooth}
\end{equation}
On the positive tail, $h_\tau(s)=s-b_\tau$ for $s\ge\tau$, where
\[
 b_\tau=\tau\int_0^1(1-\rho(u))\,du\in[\tau/2,\tau].
\]
\end{lemma}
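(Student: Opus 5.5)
The plan is to work from the change of variables $h_\tau(s)=\tau\,h_1(s/\tau)$ with $h_1(s)=\int_0^{s}\rho(u)\,du$, and reduce every claim to properties of $\rho$.

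\emph{Basic properties.} Smoothness follows because $\rho$ is $C^\infty$: the construction $\chi=\eta/(\eta+\eta(1-\cdot))$ has a nonvanishing denominator, and composing with the polynomial $(4u^2-1)/3$ preserves smoothness. Oddness follows since $\rho$ is even, so its antiderivative from $0$ is odd. The derivative is $h_\tau'(s)=\rho(s/\tau)\in[0,1]$. For $|s|\le\tau/2$ we have $\rho(s/\tau)=0$, so $h_\tau'\equiv0$ there; together with $h_\tau(0)=0$ this gives $h_\tau\equiv0$ on $[-\tau/2,\tau/2]$, and hence all its derivatives vanish on that interval (at the endpoints by one-sided continuity of the smooth derivatives, or because every derivative of order $\ge1$ is a multiple of $\rho^{(k)}(s/\tau)$, and $\rho$ vanishes to infinite order at $|u|=1/2$).

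\emph{The inequalities in \eqref{eq:gate-basic}.} For $s\ge0$,
\[
 0\le h_\tau(s)=\int_0^s\rho(v/\tau)\,dv\le s
 \quad\text{and}\quad
 s-h_\tau(s)=\int_0^s\bigl(1-\rho(v/\tau)\bigr)\,dv\le\int_0^{\tau}1\,dv=\tau,
\]
since $1-\rho(v/\tau)=0$ for $v\ge\tau$. Oddness extends $|h_\tau(s)|\le|s|$ and $|s-h_\tau(s)|\le\tau$ to negative $s$. For $h_\tau(s)\ge s-\tau$: when $s\ge0$ it follows from the bound above, and when $s<0$ we have $h_\tau(s)\ge s$ by oddness and $h_\tau(|s|)\le|s|$.

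\emph{Tail formula and the constant $b_\tau$.} For $s\ge\tau$, $s-h_\tau(s)=\int_0^{\tau}\bigl(1-\rho(v/\tau)\bigr)\,dv=\tau\int_0^1(1-\rho)=b_\tau$. Because $1-\rho=1$ on $[0,1/2]$ and $0\le1-\rho\le1$, we get $b_\tau\in[\tau/2,\tau]$.

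\emph{Smoothness estimate \eqref{eq:gate-smooth}.} Compute $h_\tau^{(p-1)}(s)=\tau^{2-p}\rho^{(p-2)}(s/\tau)$, with the convention $\rho^{(0)}=\rho$ when $p=2$. Its derivative is $\tau^{1-p}\rho^{(p-1)}(s/\tau)$, which is bounded by $\tau^{1-p}\norm{\rho^{(p-1)}}_\infty\le B_p\tau^{1-p}$. The mean value theorem then gives the Lipschitz bound. Note that $\norm{\rho^{(p-1)}}_\infty<\infty$ because $\rho^{(p-1)}$ is continuous and vanishes outside $[-1,1]$.

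The only mildly delicate point is the exact flatness at the endpoints $\pm\tau/2$. It is handled by the open-interval vanishing plus continuity of all derivatives, and nothing else is an obstacle.
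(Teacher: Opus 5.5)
Your proof is correct and follows essentially the same route as the paper. Both arguments rest on the chain-rule formula $h_\tau^{(k)}(s)=\tau^{1-k}\rho^{(k-1)}(s/\tau)$, the integral representation of $s-h_\tau(s)$ for $s\ge0$ extended to negative $s$ by oddness, flatness and the tail formula read off from the definition of $\rho$, and the Lipschitz bound deduced from $\norm{h_\tau^{(p)}}_\infty\le B_p\tau^{1-p}$; you simply spell out details the paper leaves implicit, such as the case $s<0$ of $h_\tau(s)\ge s-\tau$ and why $b_\tau\in[\tau/2,\tau]$.
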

\begin{proof}
Differentiating gives $h_\tau'(s)=\rho(s/\tau)$ and
$h_\tau^{(k)}(s)=\tau^{1-k}\rho^{(k-1)}(s/\tau)$ for $k\ge1$.
The derivative bound implies the Lipschitz and sign claims. For $s\ge0$,
\[
0\le s-h_\tau(s)=\tau\int_0^{s/\tau}(1-\rho(u))\,du\le\tau.
\]
Oddness handles negative $s$. The tail and flatness statements follow from the definition of $\rho$, including at the endpoints by smoothness. Finally,
$\norm{h_\tau^{(p)}}_\infty\le B_p\tau^{1-p}$ implies \eqref{eq:gate-smooth}.
\end{proof}

\subsection{A fixed-diameter monotone family}
In this section the public domain is $\mathcal X=B_2^d(R)$,
with the public initial point $x_0=0$. Neither depends on the hidden
frame below. For a query budget $N\ge1$, write $m=N+1$ and set
\begin{equation}
 q=1-\frac1{2m},\qquad c=\frac{R}{2\sqrt m},\qquad
 \tau=\frac{c}{8m}=\frac{R}{16m^{3/2}},\qquad
 \lambda=\frac{L_p\tau^{p-1}}{B_p}.
\label{eq:parameters}
\end{equation}
Let $V=(v_1,\ldots,v_m)$ be an orthonormal frame in $\R^d$. Define
\begin{equation}
 T_V(x)=c v_1+q\sum_{i=1}^{m-1}h_\tau(\ip{v_i}{x})v_{i+1},
 \qquad
 F_V(x)=\lambda\bigl(x-T_V(x)\bigr).
\label{eq:operator}
\end{equation}
All scalar parameters, the gate, and even this template may be disclosed to the algorithm; only the frame is hidden.

\begin{lemma}[Contraction, smoothness, and a uniformly interior zero]
\label{lem:operator}
The following properties hold independently of $d$ and the frame.
\begin{enumerate}
\item $T_V$ is globally $q$-contractive and maps $\Ball$ into itself.
\item $F_V$ is globally $\lambda(1-q)$-strongly monotone and satisfies \eqref{eq:class} with the prescribed $L_p$.
\item Its unique zero $x^\star$ belongs to $\Ball$ and satisfies
\[
\frac{3R}{16}\le\norm{x^\star}\le\frac{R}{\sqrt3}.
\]
\item For every $x\in\Ball$, including boundary points,
\begin{equation}
\rtan(x)=\norm{F_V(x)}.
\label{eq:no-cancellation}
\end{equation}
\end{enumerate}
\end{lemma}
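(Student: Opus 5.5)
The plan is to verify the four items in order, using only the orthonormality of $V$ and the gate properties of Lemma~\ref{lem:gate}. Throughout I write $x_i=\ip{v_i}{x}$.

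\textbf{Item 1.} I start from
\[
T_V(x)-T_V(y)=q\sum_{i=1}^{m-1}\bigl(h_\tau(x_i)-h_\tau(y_i)\bigr)v_{i+1}.
\]
Orthonormality and $0\le h_\tau'\le1$ give $\norm{T_V(x)-T_V(y)}^2\le q^2\sum_i(x_i-y_i)^2\le q^2\norm{x-y}^2$, so $T_V$ is $q$-contractive.

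For invariance of the ball, the crude bound $c+qR$ does \emph{not} suffice, because $c\ge(1-q)R$. I will instead use that $cv_1$ is orthogonal to the gated part. Together with $|h_\tau(s)|\le|s|$ this gives
\[
\norm{T_V(x)}^2\le c^2+q^2R^2 .
\]
It then suffices to check $c^2=R^2/(4m)\le(1-q^2)R^2=(1/m-1/(4m^2))R^2$, which holds.

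\textbf{Item 2.} Strong monotonicity follows from Cauchy--Schwarz and Item 1:
\[
\ip{F_V(x)-F_V(y)}{x-y}\ge\lambda(1-q)\norm{x-y}^2 .
\]
For smoothness, the identity part of $F_V$ contributes nothing to $D^{p-1}F_V$ when $p\ge3$, and only a constant term when $p=2$. In both cases
\[
D^{p-1}F_V(x)-D^{p-1}F_V(y)=-\lambda\bigl(D^{p-1}T_V(x)-D^{p-1}T_V(y)\bigr),
\]
where
\[
D^{p-1}T_V(x)[u_1,\ldots,u_{p-1}]=q\sum_i h_\tau^{(p-1)}(x_i)\prod_k\ip{v_i}{u_k}\,v_{i+1}.
\]
The coefficient differences satisfy $|\Delta_i|\le B_p\tau^{1-p}|x_i-y_i|$ by \eqref{eq:gate-smooth}. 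I bound $|\ip{v_i}{u_k}|\le1$ for $k\ge2$, and use $\sum_i(x_i-y_i)^2\ip{v_i}{u_1}^2\le\norm{x-y}^2\norm{u_1}^2$. This gives an operator-norm Lipschitz constant $\lambda qB_p\tau^{1-p}=qL_p\le L_p$.

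\textbf{Item 3.} By Banach's theorem on the invariant ball, $T_V$ has a unique fixed point $x^\star\in\Ball$, and this is the unique zero of $F_V$. Its coordinates satisfy $x^\star_1=c$, $x^\star_{i+1}=qh_\tau(x^\star_i)$, and $x^\star$ has no component outside $\Span V$.

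The upper bound uses $|h_\tau(s)|\le|s|$:
\[
\norm{x^\star}^2\le c^2+q^2\norm{x^\star}^2,\qquad\text{so}\qquad \norm{x^\star}^2\le\frac{c^2}{1-q^2}=\frac{R^2}{4-1/m}\le\frac{R^2}{3}.
\]
The lower bound is the one genuinely quantitative step, and I expect it to be the main check. From $h_\tau(s)\ge s-\tau$, induction gives $x^\star_i\ge q^{i-1}c-(i-1)\tau$. For $i\le m$ I use $q^{i-1}\ge1-(i-1)/(2m)\ge1/2$ and $(i-1)\tau\le m\tau=c/8$. Hence every $x^\star_i\ge3c/8$; these stay positive, so each step is legitimate. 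Then
\[
\norm{x^\star}\ge\sqrt m\cdot\frac{3c}{8}=\frac{3R}{16}.
\]

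\textbf{Item 4.} At interior points $N_{\Ball}(x)=\{0\}$, so there is nothing to prove. At a point with $\norm{x}=R$, the normal cone is $\{tx:t\ge0\}$, and
\[
\norm{F_V(x)+tx}^2=\norm{F_V(x)}^2+2t\ip{F_V(x)}{x}+t^2R^2 .
\]
Item 1 gives $\ip{F_V(x)}{x}=\lambda(R^2-\ip{T_V(x)}{x})\ge\lambda(R^2-R\norm{T_V(x)})\ge0$. So $t=0$ minimizes the expression, which yields \eqref{eq:no-cancellation}.
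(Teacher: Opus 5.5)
Your proof is correct and follows the paper's argument item by item: the orthogonal split of $T_V$ gives contraction and ball invariance, and Cauchy--Schwarz gives strong monotonicity. The lower bound $3R/16$ comes from the recursion $x^\star_{i+1}=qh_\tau(x^\star_i)$ with $h_\tau(s)\ge s-\tau$ and Bernoulli, and $\ip{F_V(x)}{x}\ge0$ on the sphere gives the no-cancellation identity. The only differences are cosmetic. You bound the Lipschitz modulus of $D^{p-1}T_V$ directly instead of bounding $\norm{D^pT_V}_{\mathrm{op}}$ and integrating along segments. You also get $\norm{x^\star}^2\le c^2/(1-q^2)$ from the self-bound $\norm{x^\star}^2\le c^2+q^2\norm{x^\star}^2$ rather than from the coordinatewise estimate $0\le x^\star_i\le cq^{i-1}$.
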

\begin{proof}
Orthogonality and the scalar Lipschitz bound give
\[
\norm{T_V(x)-T_V(y)}^2
 =q^2\sum_{i=1}^{m-1}|h_\tau(\ip{v_i}{x})-h_\tau(\ip{v_i}{y})|^2
 \le q^2\norm{x-y}^2.
\]
Also, for $\norm{x}\le R$,
\[
\norm{T_V(x)}^2\le c^2+q^2R^2\le R^2,
\]
since $c^2/R^2=1/(4m)$ and
$1-q^2=1/m-1/(4m^2)\ge1/(4m)$.

By Cauchy--Schwarz,
\[
\ip{F_V(x)-F_V(y)}{x-y}
\ge\lambda(1-q)\norm{x-y}^2.
\]
For unit vectors $u_1,\ldots,u_p$,
\[
 D^pT_V(x)[u_1,\ldots,u_p]
 =q\sum_{i=1}^{m-1}h_\tau^{(p)}(\ip{v_i}{x})
       \prod_{j=1}^p\ip{v_i}{u_j}\,v_{i+1}.
\]
Its squared norm is at most
\[
q^2 B_p^2\tau^{2-2p}
\sum_i\prod_{j=1}^p|\ip{v_i}{u_j}|^2
\le q^2 B_p^2\tau^{2-2p}.
\]
The last inequality follows by bounding all but one factor by $1$ and using orthonormality for the remaining factor. Since $p\ge2$, the identity term in $F_V$ has zero $p$th derivative. Thus
\[
\norm{D^pF_V(x)}_{\rm op}\le\lambda qB_p\tau^{1-p}\le L_p.
\]
Integration along a segment proves the required Lipschitz bound for $D^{p-1}F_V$. In particular, no dimension factor is hidden in $L_p$.

The contraction theorem supplies a unique fixed point of $T_V$. More explicitly,
\[
 x^\star=\sum_{i=1}^m s_i v_i,\qquad
 s_1=c,\quad s_{i+1}=q h_\tau(s_i).
\]
The gate bounds imply $0\le s_i\le cq^{i-1}$, and hence
\[
\norm{x^\star}^2\le\frac{c^2}{1-q^2}
 =\frac{R^2}{4(1-1/(4m))}\le\frac{R^2}{3}.
\]
Conversely, $h_\tau(s)\ge s-\tau$ and Bernoulli's inequality give
\[
 s_i\ge cq^{i-1}-q\tau\sum_{j=0}^{i-2}q^j
 \ge c/2-m\tau=3c/8.
\]
Thus $\norm{x^\star}\ge(3c/8)\sqrt m=3R/16$.

For an interior $x$, its normal cone is $\{0\}$. On the boundary,
$N_{\Ball}(x)=\{a x:a\ge0\}$ and ball invariance yields
\[
\ip{F_V(x)}{x}
=\lambda\bigl(R^2-\ip{T_V(x)}{x}\bigr)\ge0.
\]
Consequently $\norm{F_V(x)+a x}^2\ge\norm{F_V(x)}^2$ for every $a\ge0$, with equality at $a=0$. This proves \eqref{eq:no-cancellation}.
\end{proof}

\begin{remark}[Strong monotonicity does not change the problem class]
The hard instances happen to be strongly monotone, but their strong-monotonicity parameter is $\lambda/(2m)$ and tends to zero with $N$. We do not impose a fixed positive strong-monotonicity constant on the full class. The construction therefore remains a lower bound for the original merely monotone setting.
\end{remark}

\subsection{A robust residual lower bound on a hidden slab}
\begin{lemma}[Hidden final coordinate]
\label{lem:residual}
For every $x\in\Ball$ satisfying $|\ip{v_m}{x}|\le\tau/2$,
\begin{equation}
\rtan(x)\ge\frac{\lambda R}{8m}
 =\frac{L_pR^p}{8B_p16^{p-1}\,m^{(3p-1)/2}}.
\label{eq:hard-residual}
\end{equation}
\end{lemma}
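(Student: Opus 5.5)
By item~4 of Lemma~\ref{lem:operator}, $\rtan(x)=\norm{F_V(x)}=\lambda\norm{x-T_V(x)}$ for every $x\in\Ball$, so normal cones play no further role. It then suffices to show $\norm{x-T_V(x)}\ge R/(8m)$ whenever $|\ip{v_m}{x}|\le\tau/2$. The closed form on the right of \eqref{eq:hard-residual} is then just the substitution $\lambda=L_p\tau^{p-1}/B_p$ and $\tau=R/(16m^{3/2})$ from \eqref{eq:parameters}.

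\textbf{Coordinates.} I will work in the frame coordinates $s_i=\ip{v_i}{x}$ for $i=1,\ldots,m$ and write $e=x-T_V(x)$. Projecting \eqref{eq:operator} onto the frame gives
\[
\ip{v_1}{e}=s_1-c,\qquad \ip{v_{i+1}}{e}=s_{i+1}-q\,h_\tau(s_i)\quad(1\le i\le m-1).
\]
Set $\epsilon_i=|\ip{v_i}{e}|$. Orthonormality gives $\sum_{i=1}^m\epsilon_i\le\sqrt m\,\norm{e}$. The components of $e$ orthogonal to the frame are simply discarded.

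\textbf{Propagating a lower bound along the chain.} Define $\ell_1=c-\epsilon_1$ and $\ell_{i+1}=q(\ell_i-\tau)-\epsilon_{i+1}$. I claim $s_i\ge\ell_i$ for all $i$, by induction. The base case is immediate from $s_1-c\ge-\epsilon_1$. For the step, Lemma~\ref{lem:gate} gives that $h_\tau$ is nondecreasing and $h_\tau(s)\ge s-\tau$ for all real $s$, including negative ones. Hence $h_\tau(s_i)\ge h_\tau(\ell_i)\ge\ell_i-\tau$, and since $q>0$ we get $s_{i+1}\ge q h_\tau(s_i)-\epsilon_{i+1}\ge\ell_{i+1}$. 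Unrolling the recursion and using $q\le1$ yields
\[
s_m\ge q^{m-1}c-(m-1)\tau-\sum_{i=1}^m\epsilon_i .
\]
By Bernoulli's inequality, $q^{m-1}\ge1-(m-1)/(2m)\ge1/2$. Also $m\tau=c/8$. Together these give $s_m\ge 3c/8-\sqrt m\,\norm{e}$. This mirrors the lower bound $s_i\ge 3c/8$ in the proof of Lemma~\ref{lem:operator}, now carrying the residual errors.

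\textbf{Closing the argument.} The hypothesis gives $s_m\le\tau/2=c/(16m)\le c/32$, using $m=N+1\ge2$. Combining with the previous display, $\sqrt m\,\norm{e}\ge 3c/8-c/32\ge c/4$. Since $c=R/(2\sqrt m)$, this gives $\norm{e}\ge c/(4\sqrt m)=R/(8m)$, as required.

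\textbf{Main obstacle.} The only delicate point is making sure the inductive propagation is legitimate when some $s_i$ or $\ell_i$ are negative or lie inside the flat region. This is exactly why I rely on the monotonicity of $h_\tau$ and the global bound $h_\tau(s)\ge s-\tau$, rather than on the tail formula $h_\tau(s)=s-b_\tau$, which holds only for $s\ge\tau$. The second point to get right is converting the per-coordinate errors into a norm bound with only a $\sqrt m$ loss; Cauchy--Schwarz over the $m$ frame coordinates does this.
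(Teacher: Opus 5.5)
Your proof is correct and is essentially the paper's argument: unrolling your recursion for $\ell_m$ gives the paper's telescoping sum with weights $w_i=q^{m-i}$. Your bound $\sum_i\epsilon_i\le\sqrt m\,\norm{e}$ plays the role of the paper's pairing of the signed residual with $w=\sum_i q^{m-i}v_i$, which has $\norm{w}\le\sqrt m$. One small point: you do not need monotonicity of $h_\tau$, since $h_\tau(s_i)\ge s_i-\tau\ge\ell_i-\tau$ already follows from the global bound $h_\tau(s)\ge s-\tau$ and the inductive hypothesis.
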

\begin{proof}
Write $s_i=\ip{v_i}{x}$ and $e=x-T_V(x)$. The coordinates of $e$ satisfy
\[
 e_1=s_1-c,\qquad
 e_i=s_i-qh_\tau(s_{i-1})\le s_i-qs_{i-1}+q\tau\quad(i\ge2).
\]
Multiply by $w_i=q^{m-i}$ and sum. The coordinate terms telescope:
\begin{align*}
 \sum_{i=1}^m w_i e_i
 &\le s_m-cq^{m-1}+q\tau\sum_{i=2}^m q^{m-i}\\
 &\le\tau/2-c/2+m\tau
 =\tau/2-3c/8\le-c/4.
\end{align*}
Here $q^{m-1}\ge1/2$, $m\tau=c/8$, and $\tau/2\le c/16$.
The vector $w=\sum_iw_iv_i$ has norm at most $\sqrt m$. Therefore
\[
 \norm{x-T_V(x)}\ge\frac{|\ip{w}{e}|}{\norm{w}}
 \ge\frac{c}{4\sqrt m}=\frac{R}{8m}.
\]
Multiplication by $\lambda$, Lemma~\ref{lem:operator}, and the parameter values prove the claim. Components of $x$ orthogonal to the frame do not invalidate this argument.
\end{proof}

\section{Adaptive lower bounds}
\label{sec:lower-bounds}
\subsection{Unrestricted deterministic algorithms}\label{sec:lower-det}
\begin{theorem}[Unrestricted deterministic exact-jet lower bound]
\label{thm:det}
For every $p\ge2$, $L_p,R>0$, $N\ge1$, dimension $d\ge2N+2$, and deterministic algorithm using at most $N$ feasible queries to the exact oracle, there is a frame $V$ such that its arbitrary feasible output $\widehat x$ on $F_V$ obeys \eqref{eq:hard-residual}. In particular, with $D=2R$,
\[
\mathsf T_p^{\rm det}(\eps;L_p,D)
 =\Omega_p\!\left((L_pD^p/\eps)^{2/(3p-1)}\right)
\]
in the high-dimensional worst-case model.
\end{theorem}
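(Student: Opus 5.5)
The plan is a resisting oracle that builds the frame $V$ online and keeps $v_m$ hidden through all $N$ queries and the output. Throughout, the oracle maintains a prefix $v_1,\ldots,v_k$ of revealed frame vectors, starting from $k=1$ with $v_1$ chosen as an arbitrary unit vector.

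The first step is to confirm that the jet at $x$ depends only on the revealed prefix whenever $x$ is flat beyond it. Suppose $|\ip{v_i}{x}|\le\tau/2$ for all $i\ge k$. By Lemma~\ref{lem:gate}, $h_\tau$ and all its derivatives vanish on $[-\tau/2,\tau/2]$. Hence in \eqref{eq:operator} the summands with $i\ge k$ contribute nothing to $F_V$ or to any $D^jF_V$ at $x$, on a neighborhood of $x$. The returned jet $\cO_p^{F_V}(x)$ is therefore determined by $(v_1,\ldots,v_k)$ together with the public template.

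The second step is the invariant. After the $t$-th query, the revealed prefix has length $k_t\le t+1$. Every later frame vector is chosen orthogonal to $\Span\{v_1,\ldots,v_{k_t},x_1,\ldots,x_t\}$, so all previously answered queries stay flat beyond the prefix. When query $x_t$ arrives, the oracle extends the prefix greedily: while $|\ip{v_{k}}{x_t}|>\tau/2$, it reveals a new unit vector $v_{k+1}$ orthogonal to $v_1,\ldots,v_k,x_1,\ldots,x_t$. The new $v_{k+1}$ is orthogonal to $x_t$, so the loop adds at most one vector per query; the extension test is only applied to the last revealed vector.

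I need to state the invariant carefully. The entry $\ip{v_{k}}{x_t}$ for the current last vector could exceed $\tau/2$, because $v_k$ was chosen before $x_t$. In that case reveal $v_{k+1}\perp x_t$ and stop, since $\ip{v_{k+1}}{x_t}=0$. Coordinates $i<k$ need not be flat, because they are revealed and the jet may depend on them. The condition that matters is flatness for every index $\ge$ the first unrevealed index, and that holds by orthogonality. Each query therefore reveals at most one new vector. After $N$ queries at most $N+1=m$ vectors are revealed, so $v_m$ may still be unrevealed or may just have been revealed.

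Because of this edge case, my accounting would instead be: the number of revealed vectors is at most $1+N$, while the final check also needs $|\ip{v_m}{\widehat x}|\le\tau/2$. I would handle the output $\widehat x$ as an extra step. After the algorithm outputs $\widehat x$, choose all remaining vectors orthogonal to everything queried and to $\widehat x$. If $v_m$ was already revealed at step $N$, the count fails by one.

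To fix this I would count more carefully. Initially nothing depends on $v_1$ beyond $cv_1$, which is always visible, so $v_1$ counts as revealed at time $0$. The $t$-th query reveals at most one vector, so after $N$ queries $k\le N+1=m$. Hitting $k=m$ would require $v_m$ to be revealed during a query, but $v_m$ is revealed only when $|\ip{v_{m-1}}{x_t}|>\tau/2$, and the lemma only needs $\ip{v_m}{\widehat x}$ small. If $v_m$ was revealed at query $t\le N$, it was chosen orthogonal only to $x_1,\ldots,x_t$, not to $\widehat x$.

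That edge is the main obstacle. I expect to resolve it either by taking $m=N+2$ in spirit, absorbing the change into the $\Omega_p$ constants, or by noting that with $m=N+1$ and $v_1$ public, the first query can reveal at most $v_2$. Then after $N$ queries at most $v_1,\ldots,v_{N+1}=v_m$ are revealed. I would go with the former: run the construction with chain length adjusted by one, which changes \eqref{eq:hard-residual} only by a constant factor in $m$.

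Dimension accounting: the new vectors must be orthogonal to at most $m$ prior frame vectors, $N$ queries and the output, and $d\ge2N+2$ leaves room. Once the frame is completed, the whole transcript is consistent with the single instance $F_V$, and the deterministic algorithm's run on $F_V$ coincides with the simulated run. Lemma~\ref{lem:residual} then applies to $\widehat x$, and Lemma~\ref{lem:operator} shows that $F_V\in\Class$ with $\diam\le 2R=D$.

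For the complexity statement, solve $L_pR^p/(8B_p16^{p-1}m^{(3p-1)/2})>\eps$ for $m$. This gives $N=\Omega_p(Q^{2/(3p-1)})$.
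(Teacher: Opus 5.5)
Your resisting-oracle strategy (online frame, flat gates hiding the full jet, last frame vector chosen orthogonal to the output) is the paper's approach. As written, though, it does not prove the theorem as stated, and the off-by-one you flag is real. It comes from committing $v_1$ as an arbitrary unit vector at time $0$. The first query can then force $v_2$, and each later query one more. After $N$ queries the prefix can be $v_1,\ldots,v_{N+1}=v_m$, with $v_m$ fixed before $\widehat x$ is known, so Lemma~\ref{lem:residual} cannot be invoked.

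Replacing $m=N+1$ by $N+2$ proves a different statement, for three reasons:
\begin{itemize}
\item The instance family changes, because all of $q,c,\tau,\lambda$ in \eqref{eq:parameters} depend on $m$.
\item The residual bound becomes \eqref{eq:hard-residual} with $m=N+2$, not the stated one.
\item The dimension count breaks. The last vector $v_{N+2}$ must be orthogonal to $v_1,\ldots,v_{N+1}$, $x_1,\ldots,x_N$ and $\widehat x$, a span of dimension up to $2N+2$, so you need $d\ge 2N+3$. Your claim that $d\ge2N+2$ ``leaves room'' fails for the lengthened chain.
\end{itemize}
The asymptotic $\Omega_p(Q^{2/(3p-1)})$ conclusion survives, but the first assertion of the theorem does not follow.

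The missing observation is that $v_1$ need not be fixed blindly. It can be chosen orthogonal to $x_1$, since $x_1$ is determined before any answer for a deterministic algorithm, or simply because the oracle picks $v_1$ when answering query $1$. The paper does this uniformly. At query $t$ it picks $v_t\perp\Span\{v_1,\ldots,v_{t-1},x_1,\ldots,x_t\}$, a span of dimension at most $2t-1$. It answers with the truncated operator $F_t$ of \eqref{eq:truncated-operator}, which uses only $v_1,\ldots,v_t$. Every omitted term $h_\tau(\ip{v_i}{x})v_{i+1}$ has $i\ge t$ and $v_i\perp x_t$, so its entire jet vanishes at $x_t$. After $N$ queries only $v_1,\ldots,v_N$ are committed, and $v_m=v_{N+1}$ is chosen orthogonal to a span of dimension at most $2N+1<d$ that contains $\widehat x$.

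In your greedy version the same repair works. Take $v_1\perp x_1$; then the first query triggers no extension, and at most $N$ vectors are revealed in total. The greedy test is then unnecessary: revealing one vector per query unconditionally is simpler.

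One further point: your later phrasing of the invariant (flatness for indices at or beyond the first unrevealed one) is weaker than what is needed. Gate $k$, for the last revealed $v_k$, has the unrevealed output direction $v_{k+1}$ and must also be flat. Your loop does enforce this, matching your first step, but the sentence stating the invariant should say it.
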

\begin{proof}
Pad an algorithm using fewer queries with ignored feasible queries. Let its queries be $x_1,\ldots,x_N$. We construct the frame while answering queries, and then exhibit a single fixed operator consistent with the full transcript.

At query $t$, choose $v_t$ orthogonal to
\[
\Span\{v_1,\ldots,v_{t-1},x_1,\ldots,x_t\}.
\]
The dimension of this span is at most $2t-1<d$. Return all oracle derivatives through order $p-1$ of
\begin{equation}
 F_t(x)=\lambda\left(x-cv_1-q\sum_{i=1}^{t-1}
                   h_\tau(\ip{v_i}{x})v_{i+1}\right)
 \label{eq:truncated-operator}
\end{equation}
at $x_t$, with an empty sum for $t=1$. Since the algorithm is deterministic, its next query, and eventually its output $\widehat x$, are now determined.

After the last response, choose $v_m=v_{N+1}$ orthogonal to
\[
\Span\{v_1,\ldots,v_N,x_1,\ldots,x_N,\widehat x\}.
\]
This span has dimension at most $2N+1<d$. Define the final operator to be $F_V$ in \eqref{eq:operator}.

For each previous query $x_t$, all $v_i$ with $i\ge t$ are orthogonal to $x_t$. Every term missing from $F_t$ has the form
$h_\tau(\ip{v_i}{x})v_{i+1}$ with $i\ge t$; its value and all derivatives vanish at $x_t$. Hence
\[
 D^kF_V(x_t)=D^kF_t(x_t),\qquad 0\le k\le p-1.
\]
This is equality of the \emph{entire returned tensors}, not only of their products with previously queried directions. Thus the simulated transcript is exactly the transcript of a single fixed admissible operator. Finally, $\ip{v_m}{\widehat x}=0$, so Lemma~\ref{lem:residual} applies. Inverting its power law gives the query lower bound.
\end{proof}

\subsection{Randomized algorithms: a fixed instance distribution}\label{sec:lower-random}
The preceding resisting oracle is not by itself a randomized lower bound. We now draw the complete frame at the start and use the flat region of the gate to preserve exact transcript equality with high probability.
The symbol $\delta$ in this subsection is a failure-probability
parameter, separate from the inner-solver accuracy used in the upper
bound. All probabilities below are over a frame chosen before the run
and, when present, the algorithm's independent seed.

\begin{lemma}[A spherical tail inequality]
\label{lem:sphere}
If $k\ge1$, $v$ is uniform on the unit sphere of a
$k$-dimensional subspace, and $x$ is fixed with $\norm{x}\le R$, then,
for every threshold $a>0$,
\[
\Prob\{|\ip{v}{x}|>a\}\le2\exp\left(-\frac{k a^2}{2R^2}\right).
\]
\end{lemma}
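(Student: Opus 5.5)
The plan is to reduce to the standard spherical cap estimate. By rotation invariance of the uniform measure on the unit sphere $S$ of the $k$-dimensional subspace $W$, only the orthogonal projection $u=\Pi_W x$ matters, since $\ip{v}{x}=\ip{v}{u}$ for $v\in W$. Moreover $\norm{u}\le\norm{x}\le R$. If $u=0$ the probability is zero and there is nothing to prove. Otherwise we may rotate within $W$ so that $u=\norm{u}e_1$, and the event becomes $|v_1|>a/\norm{u}\ge a/R$. It therefore suffices to show
\[
\Prob\{|v_1|>s\}\le2e^{-ks^2/2}
\]
for $v$ uniform on $S^{k-1}\subset\R^k$ and every $s>0$.

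For $k=1$, $v_1=\pm1$. The event $|v_1|>s$ is then empty for $s\ge1$. For $s<1$ the claimed bound is $2e^{-s^2/2}>2e^{-1/2}>1$, so it holds trivially. The same triviality covers $s\ge1$ in every dimension, and $2e^{-ks^2/2}\ge1$ whenever $ks^2\le 2\log2$. So the content lies in the range $k\ge2$, $0<s<1$.

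For the main case I use the classical cap bound. The set $\{v\in S:v_1>s\}$ is contained in a ball of radius $\sqrt{1-s^2}$. The cleanest route is through the cone measure. Let $C$ be the union of segments from $0$ to cap points. Then the normalized surface measure of the cap equals $\mathrm{vol}(C)/\mathrm{vol}(B^k)$. The cone $C$ lies inside the ball of radius $\sqrt{1-s^2}$ centered at $s e_1$, at least for $s\ge1/\sqrt2$. For smaller $s$ one instead uses the ball of radius $1/(2s)$ centered at $e_1/(2s)$, which gives a bound of $(2s)^{-k}$...

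Since the case split for small $s$ is messy, I would rather use a Gaussian argument directly. Write $v=g/\norm{g}$ with $g$ standard Gaussian in $\R^k$. The quantity $v_1^2$ has the $\mathrm{Beta}(1/2,(k-1)/2)$ law, with density proportional to $t^{-1/2}(1-t)^{(k-3)/2}$. The tail satisfies
\[
\Prob\{v_1^2>s^2\}=\frac{\int_{s^2}^1t^{-1/2}(1-t)^{(k-3)/2}dt}{B(1/2,(k-1)/2)}.
\]
Substituting $t=\sin^2\theta$ gives the form $\int_{\arcsin s}^{\pi/2}\cos^{k-2}\theta\,d\theta\big/\int_0^{\pi/2}\cos^{k-2}\theta\,d\theta$. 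I then plan to bound this ratio using the shift inequality $\cos(\theta+\theta_0)\le\cos\theta_0\cos\theta$...

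This is the step I expect to be the main obstacle: obtaining the sharp constant $1/2$ in the exponent rather than a weaker one. My first attempt is the projection-contraction argument. Set $f(\theta)=\cos^{k-2}\theta$ and $\theta_0=\arcsin s$. For $\theta\in[0,\pi/2-\theta_0]$,
\[
\cos(\theta+\theta_0)\le\cos\theta\cdot\cos\theta_0 ,
\]
which follows because $\sin\theta\sin\theta_0\ge0$ and the cosine addition formula then gives $\cos(\theta+\theta_0)=\cos\theta\cos\theta_0-\sin\theta\sin\theta_0$. Shifting the numerator integral by $\theta_0$ shows that the one-sided ratio is at most $\cos^{k-2}\theta_0=(1-s^2)^{(k-2)/2}\le e^{-(k-2)s^2/2}$.

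That bound loses the factor $e^{s^2}\le e$, which is not acceptable. To recover the full exponent, I would refine with the exact Gaussian comparison $\Prob\{v_1>s\}\le\Prob\{g_1>s\sqrt{k}\}\cdot(\text{correction})$. Alternatively, and preferably, I would invoke the standard Lévy/cap estimate $\sigma\{v_1>s\}\le e^{-ks^2/2}$, proved by the inclusion of the cone into $B(se_1,\sqrt{1-s^2})$ when $s\ge 1/\sqrt2$, with the remaining small-$s$ range handled by the triviality threshold above. Applying the one-sided bound twice then gives the factor $2$.
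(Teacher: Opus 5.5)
Your reduction to $\Prob\{|v_1|>s\}\le2e^{-ks^2/2}$, with $s=a/R$ and $v$ uniform on $S^{k-1}\subset\R^k$, is fine, and so are the trivial cases. The gap is in the argument you finally commit to: its case analysis is backwards, and it leaves the essential range unproved.

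The cone $C=\{tv:0\le t\le1,\ v_1\ge s\}$ lies in $B(se_1,\sqrt{1-s^2})$ exactly when $s\le1/\sqrt2$, not when $s\ge1/\sqrt2$. Every cap point satisfies $\norm{v-se_1}^2=1-2sv_1+s^2\le1-s^2$. The apex $0$, however, is at distance $s$ from the center, which is at most $\sqrt{1-s^2}$ only if $s^2\le1/2$. For $s>1/\sqrt2$ the inclusion fails. Worse, the bound $(1-s^2)^{k/2}$ it would give is false near $s=1$, because the cap measure is of order $(1-s^2)^{(k-1)/2}$ there. For example, when $k=2$ the cap measure is $\arccos(s)/\pi\approx0.045$ at $s=0.99$, while $1-s^2\approx0.02$.

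The ``triviality threshold'' only covers $ks^2\le2\log2$. So for every $k\ge3$ the interval $\sqrt{2\log2/k}<s\le1/\sqrt2$, which is precisely where the inequality has content, is not handled by anything you wrote. Your earlier sentence assigns the other ball $B(e_1/(2s),1/(2s))$ to small $s$, but there its bound $(2s)^{-k}$ exceeds $1$. The shift-inequality and ``Gaussian comparison'' attempts are abandoned before they close the argument.

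The repair is to swap the two cases.
\begin{itemize}
\item For $s\le1/\sqrt2$, the convex ball $B(se_1,\sqrt{1-s^2})$ contains $0$ and the cap, hence $C$. By the polar-coordinate identity, $\sigma\{v_1\ge s\}=\mathrm{vol}(C)/\mathrm{vol}(B^k)\le(1-s^2)^{k/2}\le e^{-ks^2/2}$.
\item For $s>1/\sqrt2$, the ball $B(e_1/(2s),1/(2s))$ contains $0$ and the cap. This gives $(2s)^{-k}\le e^{-ks^2/2}$, because $\log(2s)-s^2/2$ is increasing on $(0,1]$ and positive at $1/\sqrt2$.
\end{itemize}
Alternatively, your own shift bound $\Prob\{|v_1|>s\}\le(1-s^2)^{(k-2)/2}\le e^{s^2}e^{-ks^2/2}$ already suffices when $s^2\le\log2$. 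Since $\log2>1/2$, it combines with the correctly placed large-$s$ ball to cover every $s$.

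Once corrected, you have a valid geometric proof, but it differs from the paper's, which needs no case split. The paper writes a Gaussian vector as $g=Su$ with independent radius and direction. This gives $\E u_1^{2\ell}=(2\ell-1)!!/\prod_{j=0}^{\ell-1}(k+2j)\le(2\ell-1)!!/k^\ell$, hence the sub-Gaussian bound $\E e^{t\ip{v}{x}}\le e^{t^2R^2/(2k)}$. A Chernoff bound with $t=ka/R^2$ then yields the sharp constant $1/2$ directly. You already had the Gaussian representation; passing to even moments and the moment generating function, rather than to the Beta density, is what delivers the exponent without loss.
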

\begin{proof}
Let $P$ be the orthogonal projection onto the subspace.
For completeness, take a standard Gaussian vector $g\in\R^k$ and
write $g=S u$, where $S=\norm{g}$ and $u$ is uniform on the unit sphere.
The direction and radius are independent. Gaussian integration and the
radial integral give
\[
 \E g_1^{2\ell}=(2\ell-1)!!,\qquad
 \E S^{2\ell}=k(k+2)\cdots(k+2\ell-2).
\]
Dividing these identities gives the even moments of $u_1$;
rotational invariance then yields
\[
\E\ip{v}{x}^{2\ell}
=\norm{P x}^{2\ell}\frac{(2\ell-1)!!}{k(k+2)\cdots(k+2\ell-2)}
\le R^{2\ell}\frac{(2\ell-1)!!}{k^\ell}.
\]
Odd moments vanish, so summing the exponential series yields
$\E e^{t\ip{v}{x}}\le e^{t^2R^2/(2k)}$. Chernoff's bound on both signs gives the assertion.
\end{proof}

\begin{theorem}[Bounded-query randomized lower bound]
\label{thm:random}
Let $m=N+1$, fix $0<\delta<1$, and suppose
\begin{equation}
 d\ge m+\left\lceil2048m^3\log\frac{2(m^2+1)}{\delta}\right\rceil.
\label{eq:random-dimension}
\end{equation}
Draw $V$ uniformly from the orthonormal $m$-frames in $\R^d$ and use \eqref{eq:operator}. For every randomized algorithm making at most $N$ feasible oracle queries and returning a feasible $\widehat x$,
\begin{equation}
 \Prob_{V,\,\mathrm{seed}}\left\{
 \rtan(\widehat x)\ge
 \frac{L_pR^p}{8B_p16^{p-1}m^{(3p-1)/2}}
 \right\}\ge1-\delta.
\label{eq:random-hard}
\end{equation}
Consequently some \emph{fixed} frame has this failure probability over the algorithm's seed alone. Taking $\delta=1/4$ proves
\[
\mathsf T_p^{\rm rand}(\eps;L_p,D;\,\mathrm{success}\ge2/3)
 =\Omega_p\!\left((L_pD^p/\eps)^{2/(3p-1)}\right)
\]
with dimension allowed to grow. An expected-residual lower bound follows as well.
\end{theorem}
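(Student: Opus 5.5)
We fix the seed first and condition on it: the algorithm is then deterministic, the bound \eqref{eq:random-hard} is an average over seeds of a bound for each fixed seed, and Fubini later extracts a fixed frame. For a fixed deterministic algorithm we define a \emph{simulated run} that never looks at $V$ beyond the order in which frame vectors are used. At query $t$ we answer with the jet of the truncated operator $F_t$ in \eqref{eq:truncated-operator}, which involves only $v_1,\dots,v_t$. The simulated queries $x_1,\dots,x_N$ and output $\widehat x$ are then determined by $v_1,\dots,v_{N}$. The plan is to show that, with probability at least $1-\delta$ over $V$, three things hold:
\[
|\ip{v_i}{x_t}|\le\tau/2 \quad (i\ge t),\qquad |\ip{v_m}{\widehat x}|\le\tau/2 .
\]
On this good event the argument of Theorem~\ref{thm:det} gives two conclusions. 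First, every missing gate term $h_\tau(\ip{v_i}{x})v_{i+1}$ with $i\ge t$ is flat at $x_t$, so all returned tensors of $F_V$ coincide with those of $F_t$. Second, the real transcript equals the simulated one by induction on $t$. Lemma~\ref{lem:residual} then gives the residual bound at $\widehat x$.

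The probability estimate is the main obstacle, because $x_t$ depends on $v_1,\dots,v_{t-1}$ and the $v_i$ are not independent of each other. We use the sequential structure of a uniform frame. Conditional on $v_1,\dots,v_{t-1}$, the remaining vectors $v_t,\dots,v_m$ form a uniform orthonormal frame in the orthogonal complement $W_{t-1}$, which has dimension $d-t+1$. Each $v_i$ with $i\ge t$ is therefore marginally uniform on the unit sphere of $W_{t-1}$. The simulated query $x_t$ is measurable with respect to $v_1,\dots,v_{t-1}$ and satisfies $\norm{x_t}\le R$ by feasibility. Lemma~\ref{lem:sphere} with $k=d-t+1\ge d-m$ then bounds each pair by
\[
\Prob\{|\ip{v_i}{x_t}|>\tau/2\}\le 2\exp\!\left(-\frac{k\tau^2}{8R^2}\right).
\]
The output $\widehat x$ is measurable with respect to $v_1,\dots,v_N$, and $v_m$ is uniform in the complement of their span, so the same bound applies to it. There are at most $m^2$ pairs $(t,i)$ with $t\le i\le m$ ($t\le N$), plus the output pair, so fewer than $m^2+1$ events in total. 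A union bound gives
\[
\text{failure probability}\le 2(m^2+1)\exp\!\left(-\frac{(d-m)\tau^2}{8R^2}\right).
\]
Since $\tau^2/R^2=1/(256m^3)$, the exponent equals $(d-m)/(2048m^3)$. Assumption \eqref{eq:random-dimension} makes the right side at most $\delta$.

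One subtlety must be checked carefully: the good event is defined through the simulated trajectory, which depends on $V$ in a measurable way, and on that event we must confirm the real trajectory equals it. We do this by induction on $t$. If the real and simulated histories agree through $t-1$, the deterministic query rule produces the same $x_t$. Flatness then makes $\mathcal O_p^{F_V}(x_t)$ equal to the simulated response, and the output agrees at the end. The per-seed bound is therefore $\ge 1-\delta$, and averaging over seeds gives \eqref{eq:random-hard}. Averaging over $V$ shows some fixed frame $V$ has failure probability $\ge1-\delta$ over the seed.

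For the complexity statement take $\delta=1/4$, $R=D/2$, and $N$ the budget. Success with probability $\ge2/3>1/4$ on that fixed instance then forces
\[
\eps>\frac{L_pR^p}{8B_p16^{p-1}m^{(3p-1)/2}} .
\]
Inverting this with $m=N+1$ gives $N\ge c_pQ^{2/(3p-1)}-1$, and the case $Q\ge2$ absorbs the $-1$. The expected-residual bound follows from Markov-type reasoning: $\E\,\rtan(\widehat x)\ge(1-\delta)$ times the threshold.
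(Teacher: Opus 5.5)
Your proposal is correct and follows the paper's proof essentially step for step. The shared steps are: a per-seed simulated run answered by the truncated operators $F_t$; conditional uniformity of each remaining frame vector on the sphere of the $(d-t+1)$-dimensional complement; Lemma~\ref{lem:sphere} with exponent $(d-m)/(2048m^3)$ and a union bound over at most $m^2+1$ events (your extra pairs with $i=m$ are harmless); inductive transcript equality on the good event; and averaging over $V$ to fix a frame.

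There are two small slips in your last paragraph. First, the contradiction only yields $\eps\ge$ the threshold, not a strict inequality. Second, $Q\ge2$ by itself does not absorb the $-1$: your $c_p$ is very small, so $c_pQ^{2/(3p-1)}$ can be below $2$ even for $Q\ge2$. A bound uniform over all $Q\ge2$ therefore also needs the fact that at least one query is required (Lemma~\ref{lem:zero-query}), which the paper supplies in its completion of Theorem~\ref{thm:main}. The asymptotic $\Omega_p$ claim is unaffected.
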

\begin{proof}
Use the total-feasible rules from Section~\ref{sec:main} and pad any stopped run by ignored queries, retaining its saved output. These rules are defined and bounded on every simulated transcript, whether or not that transcript is eventually coupled to an actual instance. Condition first on an arbitrary algorithm seed. Define a \emph{simulated} run: at query $t$, return the derivatives of the truncated operator $F_t$ from the deterministic proof. The simulated query $x_t^{\rm sim}$ depends only on $v_1,\ldots,v_{t-1}$; the answer may additionally depend on $v_t$. In particular the simulated output depends only on $v_1,\ldots,v_N$.

Define the events
\[
 \begin{aligned}
 E_{t,i}&=\{\,|\ip{v_i}{x_t^{\rm sim}}|\le\tau/2\,\}
       &&(1\le t\le N,\ t\le i\le m-1),\\
 E_{\rm out}&=\{\,|\ip{v_m}{\widehat x^{\rm sim}}|\le\tau/2\,\},\qquad
 E=E_{\rm out}\cap\bigcap_{t=1}^N\bigcap_{i=t}^{m-1}E_{t,i}.
 \end{aligned}
\]
Conditioned on $v_1,\ldots,v_{t-1}$, each remaining $v_i$ is marginally
uniform on the unit sphere of their $(d-t+1)$-dimensional orthogonal
complement. Lemma~\ref{lem:sphere} therefore gives, for $i=t,\ldots,m-1$,
\[
\Prob\{|\ip{v_i}{x_t^{\rm sim}}|>\tau/2\}
\le2\exp\left(-\frac{(d-m)\tau^2}{8R^2}\right).
\]
Likewise, conditioning on $v_1,\ldots,v_N$ gives the same bound for
$|\ip{v_m}{\widehat x^{\rm sim}}|>\tau/2$. A union bound over fewer than $m^2+1$ events shows that their simultaneous complement has probability at least
\[
1-2(m^2+1)\exp\left(-\frac{d-m}{2048m^3}\right)\ge1-\delta.
\]
No conditioning on previous success events is used, so the conditional spherical distribution is not incorrectly preserved after selection.

On $E$, every omitted gate at every simulated query is exactly
zero together with all its derivatives. We verify transcript equality
inductively, for the fixed seed. Before the first answer, both runs use
the same query rule and have the same empty history. If their histories
agree through query $t-1$, they choose the same point at query $t$.
For every omitted term indexed by $i\ge t$, $E_{t,i}$ places the scalar
argument in the exactly flat interval. Hence the full jet of $F_V$ at
that point equals the jet of $F_t$ in \eqref{eq:truncated-operator}.
The histories therefore agree through query $t$. After $N$ queries,
their saved outputs agree as well, including when the algorithm stopped
early. Event $E_{\rm out}$ places the common output in the slab of
Lemma~\ref{lem:residual}. Thus \eqref{eq:random-hard} holds for each fixed
seed. Integrating over the independent seed proves the randomized
statement.

Finally average \eqref{eq:random-hard} over $V$. There is a frame for which the conditional probability over seeds is at least $1-\delta$. When the threshold in \eqref{eq:random-hard} exceeds $\eps$ and $\delta=1/4$, this contradicts success probability $2/3$ on every instance. The expected-residual statement follows by multiplying the threshold by $1-\delta$.
\end{proof}

\begin{remark}[What the dimension assumption means]
The deterministic construction needs only $O(N)$ ambient dimension. The random-rotation proof above uses $O(N^3\log(N/\delta))$. It does not establish the same randomized lower bound in $O(N)$ dimension. Feasible-query boundedness is essential in the concentration proof: arbitrary unbounded probes outside the ball are not covered by Theorem~\ref{thm:random}.
\end{remark}

\subsection{Uniform normalization and completion of the theorem}
\begin{lemma}[At least one query is necessary]
\label{lem:zero-query}
For all positive $L_p,D,\eps$, no zero-query randomized algorithm can satisfy \eqref{eq:target} with success probability at least $2/3$ on every instance.
\end{lemma}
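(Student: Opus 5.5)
The plan is to exhibit, for any fixed public input $(d,\mathcal X,x_0,p,L_p,D,\eps)$, two admissible operators whose $\eps$-solution sets are disjoint. A zero-query algorithm's output $\widehat x=\psi(\xi,\varnothing)$ has a distribution that does not depend on $F$. If each instance were solved with probability at least $2/3$, then $\widehat x$ would lie in both success sets simultaneously with probability at least $1/3>0$. That is impossible if the sets are disjoint.

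To build the pair, take the public ball $\mathcal X=B_2^d(R)$ with $D=2R$ and $d\ge1$, and use constant operators $F_{\pm}(x)=\pm a u$ for a fixed unit vector $u$ and a scalar $a>0$. Constant operators are monotone, and $D^{p-1}F_\pm\equiv0$ since $p\ge2$, so $F_\pm\in\Class$ for every $L_p$. The tangent residual then has a closed form. At an interior point, $\rtan(x)=a$. At a boundary point, the normal cone is $\{bx:b\ge0\}$, and $\rtan(x)$ equals $\dist(\pm au,\{-bx:b\ge0\})$. Choosing $a>\eps$, this distance is at most $\eps$ only if $x$ lies near $\mp Ru$. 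Specifically, $x$ must satisfy $\ip{\mp u}{x/R}\ge\sqrt{1-\eps^2/a^2}>0$.

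With $a>\eps$, say $a=2\eps$, the success set for $F_+$ consists of boundary points with $\ip{u}{x}<0$. The success set for $F_-$ consists of boundary points with $\ip{u}{x}>0$. These sets are disjoint, and the probability argument above finishes the proof. If the class must be stated for a general domain of diameter at most $D$, then a single instance suffices to show the complexity is at least one. The ball is admissible for every $D>0$ (for instance with $R=D/2$), and taking the worst case over public inputs means one bad domain is enough.

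The main obstacle is bookkeeping rather than difficulty. I need to confirm the boundary-normal calculation, namely that the minimal distance is $a\sqrt{1-\ip{u}{\pm x/R}^2}$ when the inner product has the right sign and $a$ otherwise. I also need to check that the zero-query convention, where the output is a measurable function of the seed alone, is exactly what Definition~\ref{def:policy} gives when $N=0$.
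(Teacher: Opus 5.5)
Your argument is correct and is essentially the paper's proof: the paper takes $F_\pm\equiv\pm M$ with $M>\eps$ on the interval $[-D/2,D/2]$, whose $\eps$-solution sets are the two opposite endpoints, and this is exactly your construction with $d=1$, where the boundary-normal computation becomes trivial. One correction: drop the opening claim that such a pair exists for every public input, since it fails for a singleton domain, where every point has zero tangent residual; as you note later, one bad domain already suffices.
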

\begin{proof}
On $\mathcal X=[-D/2,D/2]$, let $F_+(x)=M$ and $F_-(x)=-M$, where $M>\eps$. Both are monotone and satisfy \eqref{eq:class}. Their $\eps$-solution sets are respectively the singleton left and right endpoints: at all other feasible points the tangent residual is $M$. A zero-query method has the same output distribution on both instances. Its two success probabilities sum to at most one, so they cannot both be at least $2/3$.
\end{proof}

\begin{proof}[Completion of the proof of Theorem~\ref{thm:main}]
Set $R=D/2$ in the lower family and define
\[
 c_p^{\mathrm{res}}=\frac{1}{2^p\,8B_p16^{p-1}}.
\]
Theorems~\ref{thm:det} and~\ref{thm:random}, with failure parameter $\delta=1/4$ in the latter, give an obstruction
\[
 \rtan(\widehat x)\ge
 c_p^{\mathrm{res}}\frac{L_pD^p}{(N+1)^{(3p-1)/2}}.
\]
For every $N\ge1$ with this threshold strictly larger than $\eps$, a method with $N$ queries fails the required success guarantee on a fixed instance. Therefore a uniform successful budget $T\ge1$ must obey
\[
 T\ge(c_p^{\mathrm{res}}Q)^{\alpha_p}-1.
\]
Together with Lemma~\ref{lem:zero-query}, this yields
\[
 T\ge\max\{1,(c_p^{\mathrm{res}}Q)^{\alpha_p}-1\}
 \ge\tfrac12(c_p^{\mathrm{res}})^{\alpha_p}Q^{\alpha_p}.
\]
This proves the lower bound uniformly, in particular for $Q\ge2$. A deterministic method is a randomized method that ignores its seed, so $\mathsf T_p^{\mathrm{rand}}\le\mathsf T_p^{\mathrm{det}}$. Section~\ref{sec:bootstrap} supplies the deterministic upper bound. All parts of \eqref{eq:matching} follow.
\end{proof}

\section{Discussion and limitations}
\label{sec:discussion}
\paragraph{What the theorem matches.}
Theorem~\ref{thm:main} determines the exponent of the tangent-residual
oracle complexity in the stated high-dimensional exact-jet model.  The
upper and lower bounds use the same smoothness parameter $L_p$, the same
diameter scaling $D^p$, the same feasible-query convention, and the same
strong solution criterion.  No scalar potential, fixed positive
strong-monotonicity parameter, span restriction, or tensor-update rule is
assumed for the unknown operator or for the lower-bound algorithm.

\paragraph{Scope of the Lean-based review.}
The system's Lean-based review covers the internal mathematical
developments and reductions presented in this manuscript. Results
imported from the literature are used through explicit interfaces
under their stated hypotheses; they are not treated as having been
re-proved or strengthened by the review.

\paragraph{Where geometry enters.}
The upper and lower proofs use geometry in different ways.  Firm
nonexpansiveness controls squared changes in the motion of the exact
proximal orbit, and projected extrapolation turns that energy control into
a sequence of useful warm starts.  In the lower bound, invariance of the
public Euclidean ball prevents boundary normals from canceling the hidden
coordinate obstruction.  Both ingredients are specific to producing an
actual same-point tangent certificate.

\paragraph{Information complexity versus computation.}
The upper theorem counts calls to the unknown operator, including every
nested evaluation and every final certificate check.  It does not bound
the arithmetic cost of solving a known regularized Taylor-model VI,
computing exact projections, or storing full derivative tensors.  Thus the
result should be interpreted as an information-complexity theorem rather
than a polynomial-time implementation guarantee.

\paragraph{Limitations.}
The logarithmic exponent $6(p-1)$ is conservative and is not claimed to
be optimal.  The lower bounds allow the dimension to grow with the query
budget; the randomized proof also assumes that all oracle queries are
feasible and hence bounded by the public domain.  The theorem does not
establish fixed-dimensional optimality, robustness to noisy or inexact
jets, or the same randomized lower bound for unrestricted outside-domain
queries.  Although the upper method applies to convex--concave saddle
operators, the lower family need not arise from a scalar saddle function,
so unrestricted optimality for that narrower subclass remains separate.

\paragraph{Matching convex--concave specialization.}
The scalar-potential specialization, together with the lower bound of Chen et
al.~\cite{minimax}, yields a matching exponent within the $p$th-order tensor
algorithm class of their Definition~5.1.  Recall that
$\TccTensor{p}(\delta)$ denotes complexity only within this class, not
unrestricted scalar full-jet minimax complexity.  The system's Lean-based
review covers the affine-hull reduction, the one-query simulation of an
operator $(p-1)$-jet by a scalar $p$-jet, the exact lift of the relative
normal certificate, and the admissibility map used to compare oracle/update
conventions.  The reduction gives
\begin{equation}
  \TccTensor{p}(\varepsilon_{\mathrm{tan}})
  \leq\widetilde O_p\!\left(
    \left(
      \frac{L_pD_Z^p}{\varepsilon_{\mathrm{tan}}}
    \right)^{2/(3p-1)}
  \right).
  \label{eq:discussion-cc-tan-upper}
\end{equation}
Theorem~5.2 of Chen et al.~\cite{minimax} gives, for the same class,
\begin{equation}
  \TccTensor{p}(\varepsilon_{\mathrm{tan}})
  \geq\Omega_p\!\left(
    \left(
      \frac{L_pD_Z^p}{\varepsilon_{\mathrm{tan}}}
    \right)^{2/(3p-1)}
  \right).
  \label{eq:discussion-cc-tan-lower}
\end{equation}
Thus
\begin{equation}
  \boxed{
  \TccTensor{p}(\varepsilon_{\mathrm{tan}})
  =\widetilde\Theta_p\!\left(
    \left(
      \frac{L_pD_Z^p}{\varepsilon_{\mathrm{tan}}}
    \right)^{2/(3p-1)}
  \right).}
  \label{eq:discussion-cc-tan-matching}
\end{equation}
Compared with the previously available upper exponent $4/(3p+1)$, the
transferred upper exponent is $2/(3p-1)$ and matches the known restricted
lower exponent.

For the duality-gap criterion studied in Chen et al.~\cite{minimax-colt},
$\operatorname{Gap}_\phi(z)\le D_Z\rtan(z)$ gives
\begin{equation}
  \TccTensor{p}(\varepsilon_{\mathrm{gap}})
  \leq\widetilde O_p\!\left(
    \left(
      \frac{L_pD_Z^{p+1}}{\varepsilon_{\mathrm{gap}}}
    \right)^{2/(3p-1)}
  \right).
  \label{eq:discussion-cc-gap-upper}
\end{equation}
The separate hard-family inequality in Lemma~5.1 of Chen et
al.~\cite{minimax}, followed by their rescaling in Eq.~(16), yields
\begin{equation}
  \TccTensor{p}(\varepsilon_{\mathrm{gap}})
  \geq\Omega_p\!\left(
    \left(
      \frac{L_pD_Z^{p+1}}{\varepsilon_{\mathrm{gap}}}
    \right)^{2/(3p-1)}
  \right).
  \label{eq:discussion-cc-gap-lower}
\end{equation}
Consequently,
\begin{equation}
  \boxed{
  \TccTensor{p}(\varepsilon_{\mathrm{gap}})
  =\widetilde\Theta_p\!\left(
    \left(
      \frac{L_pD_Z^{p+1}}{\varepsilon_{\mathrm{gap}}}
    \right)^{2/(3p-1)}
  \right).}
  \label{eq:discussion-cc-gap-matching}
\end{equation}

In the second-order setting, $p=2$ and $L_2=\rho$ is the
Hessian-Lipschitz modulus.  On the overlap of the assumptions used by the
conference theorem, its displayed leading term is
\[
  \underbrace{\widetilde O\!\left(
    D_X^{6/7}D_Y^{6/7}
    \left(\frac{L_2}{\varepsilon_{\mathrm{gap}}}\right)^{4/7}
  \right)}_{\text{previous CRN-oracle upper bound}},
\]
where lower-order regularity parameters enter suppressed logarithmic factors.
The later hard family gives
\[
  \underbrace{\Omega\!\left(
    \left(\frac{L_2D_Z^3}{\varepsilon_{\mathrm{gap}}}\right)^{2/5}
  \right)}_{\text{Definition~5.1 lower bound}}.
\]
The scalar specialization supplies the corresponding upper bound with
exponent $2/5$, so
\begin{equation}
  \boxed{
  \Omega\!\left(
    \left(\frac{L_2D_Z^3}{\varepsilon_{\mathrm{gap}}}\right)^{2/5}
  \right)
  \ \leq\ 
  \TccTensor{2}(\varepsilon_{\mathrm{gap}})
  \ \leq\ 
  \widetilde O\!\left(
    \left(\frac{L_2D_Z^3}{\varepsilon_{\mathrm{gap}}}\right)^{2/5}
  \right).}
  \label{eq:discussion-cc-sandwich}
\end{equation}
Equivalently,
\[
  \TccTensor{2}(\varepsilon_{\mathrm{gap}})
  =\widetilde\Theta\!\left(
    \left(
      \frac{L_2(D_X^2+D_Y^2)^{3/2}}
           {\varepsilon_{\mathrm{gap}}}
    \right)^{2/5}
  \right).
\]
Thus the previously available upper accuracy exponent $4/7$ is improved to
$2/5$, matching the lower exponent within the stated tensor algorithm class.
This is not an unrestricted lower-bound claim, and our reduction does not
enlarge the quantifiers in the cited theorem.

\paragraph{Open directions.}
Natural next questions are to reduce or remove the logarithmic overhead,
obtain dimension-explicit lower bounds, understand inexact and stochastic
higher-order oracles, and determine whether the same exponent is optimal
for convex--concave operators under an unrestricted full-jet algorithm
class.  Another algorithmic question is whether the Taylor-model
subproblems used here admit implementations with comparable arithmetic
complexity in structured problem classes.

\section{Conclusion}
We proved matching, up to logarithmic factors, deterministic and
randomized higher-order oracle bounds for the tangent residual of smooth
monotone variational inequalities.  The upper bound follows from a finite
hierarchy that repeatedly converts aggregate second-difference energy of
a Halpern orbit into certified warm-start savings.  The lower bound uses
exactly flat gates to hide complete jets from arbitrary adaptive queries.
Together, these arguments identify the exponent $2/(3p-1)$ in the stated
high-dimensional information model and isolate the remaining gaps that
belong to different criteria, algorithm classes, or computational models.
The research pipeline was carried out almost entirely by
\ResearchAgentSystem{}, which also conducted a Lean-based review of the
manuscript.

\appendix

\section{Affine-hull reduction and comparison of solution criteria}
\label{app:criteria}
\subsection{Reduction to the known affine hull}
\label{app:affine}
Suppose $\operatorname{aff}\mathcal X=x_c+U\R^r$, where $U^\top U=I_r$ is known. Define
\[
 \mathcal K=\{z:x_c+Uz\in\mathcal X\},\qquad
 \widetilde F(z)=U^\top F(x_c+Uz).
\]
Then $\mathcal K$ has the same diameter, $\widetilde F$ is monotone, and its derivative Lipschitz constant is at most $L_p$. One original oracle query supplies every projected tensor by applying $U$ to input directions and $U^\top$ to the output. If $\widetilde n\in N_{\mathcal K}(z)$ and $x=x_c+Uz$, set
\[
 n=U\widetilde n-(I-UU^\top)F(x).
\]
For every $y\in\mathcal X$, the displacement $y-x$ is in the range of $U$. Hence $\ip{n}{y-x}\le0$, so $n\in N_{\mathcal X}(x)$. Moreover,
\[
 F(x)+n=U(\widetilde F(z)+\widetilde n),\qquad
 \norm{F(x)+n}=\norm{\widetilde F(z)+\widetilde n}.
\]
The queried full value $F(x)$ is available at the final certified point. Thus the reduced algorithm lifts without loss in accuracy or oracle order. On a zero-dimensional domain, the unique point already has zero tangent residual. A point-only output needs no query; producing an evaluated normal certificate needs at most one query. This additive constant does not affect the stated bounds or the worst-case lower bound over all domains.

\subsection{Tangent residual, proximal residual, and gaps}
The resolvent facts used above are elementary instances of monotone operator calculus; see~\cite{ryuboyd}. For $A=F+N_{\mathcal X}$ and $y=P_\eta(z)$,
\[
 (z-y)/\eta\in A(y),\qquad
 \rtan(y)\le\norm{z-P_\eta(z)}/\eta.
\]
The point on the left is $y$, not $z$. Since both points are feasible, the right-hand side is at most $D/\eta$. Consequently allowing arbitrary $\eta$ would make a bare proximal-residual target uninformative without computing the proximal point. Algorithm~\ref{alg:eh} instead returns an actual approximate proximal point and its verified normal certificate; Lemma~\ref{lem:final-cert} includes its final inner solve.

\begin{proposition}[Computed approximate-proximal certificates]
\label{prop:approx-recovery}
Fix $z\in\mathcal X$, $\eta>0$, and $\delta\ge0$.
If $\widehat y\in\mathcal X$ and $n\in N_{\mathcal X}(\widehat y)$
satisfy
\[
 \norm{F(\widehat y)+n+(\widehat y-z)/\eta}\le\delta,
\]
then
\[
 \norm{\widehat y-P_\eta(z)}\le\eta\delta,\qquad
 \rtan(\widehat y)\le\frac{\norm{z-P_\eta(z)}}\eta+2\delta.
\]
In particular, one application of the model correction from
Lemma~\ref{lem:correction} to
$G(w)=F(w)+(w-z)/\eta$, centered at $z$, uses at most two original
queries and returns an evaluated tangent certificate of norm at most
\[
 \frac{R_z}{\eta}+2A_pL_pR_z^p,
 \qquad R_z=\norm{z-P_\eta(z)}.
\]
The analysis quantity $R_z$ need not be known to perform this correction.
\end{proposition}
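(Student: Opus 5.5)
The plan is to treat the inexact proximal point as an exact resolvent output perturbed by a known vector, and to use strong monotonicity of the regularized operator for the distance bound. Let $y^\star=P_\eta(z)$ and put $G(w)=F(w)+(w-z)/\eta$. This operator is $1/\eta$-strongly monotone on $\mathcal X$, and $y^\star$ is its unique VI solution. Its defining inclusion gives $-G(y^\star)\in N_{\mathcal X}(y^\star)$. By hypothesis, $v:=G(\widehat y)+n$ satisfies $\norm{v}\le\delta$ with $n\in N_{\mathcal X}(\widehat y)$. Applying the certificate-to-distance inequality \eqref{eq:strong-certificate-distance} with $\mu=1/\eta$ then yields $\norm{\widehat y-y^\star}\le\eta\delta$. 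Concretely, pair $v$ and $0$ with $\widehat y-y^\star$ and use monotonicity of $G+N_{\mathcal X}$ together with Cauchy--Schwarz.

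For the tangent bound I take the normal $n$ itself as the witness. Write
\[
F(\widehat y)+n=v-\frac{\widehat y-z}{\eta}.
\]
The triangle inequality gives
\[
\norm{F(\widehat y)+n}\le\delta+\frac{\norm{\widehat y-y^\star}+\norm{y^\star-z}}{\eta}\le2\delta+\frac{\norm{z-P_\eta(z)}}{\eta}.
\]
Since $n\in N_{\mathcal X}(\widehat y)$, this bounds $\rtan(\widehat y)$ and proves the second claim.

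For the ``in particular'' part, apply Lemma~\ref{lem:correction} to $G$ with center $x=z$ and solution $z^\star=y^\star$, using the valid radius $r=R_z=\norm{z-y^\star}$. This step needs a few checks.
\begin{enumerate}
\item $G$ is monotone.
\item $D^{p-1}G$ has Lipschitz constant at most $L_p$. This holds because the affine term only changes $DG$ by $I/\eta$, which is constant; for $p=2$ a constant Jacobian shift leaves $\Lip(DG)$ unchanged.
\item The lemma's proof uses only the radius inequality, so $R_z$ need not be known to run the model step.
\end{enumerate}
The lemma then produces $(y,n)$ with $\norm{G(y)+n}\le A_pL_pR_z^p=:\delta'$. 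One query builds the jet at $z$, and one query at $y$ evaluates $F(y)$, from which $G(y)$ follows using known affine data. That is at most two queries.

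It remains to turn this into the claimed tangent bound. A naive use of the first part would give $R_z/\eta+2\delta'$, which is exactly the stated bound. The evaluated certificate is $(y,n,F(y))$ with $F(y)+n=G(y)+n-(y-z)/\eta$.

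The only delicate point is bookkeeping rather than depth. One must not claim more than the constant $2$: bounding $\norm{y-z}$ directly by $3R_z$ would give the worse constant $3R_z/\eta$. The triangle inequality through $P_\eta(z)$ is therefore the essential route. One must also confirm that the regularized operator satisfies the class assumptions on the same domain, which is needed for Lemma~\ref{lem:correction} to apply.
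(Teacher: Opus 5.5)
Your proposal is correct and follows the paper's proof essentially step for step. Like the paper, you use strong monotonicity of the regularized operator for the distance bound, the triangle inequality through $P_\eta(z)$ for the tangent bound, and Lemma~\ref{lem:correction} centered at $z$ with radius $R_z$, with its certificate norm substituted into the first part. One small wording point: the concrete pairing step needs \emph{strong} monotonicity of $G+N_{\mathcal X}$, not mere monotonicity, though your appeal to \eqref{eq:strong-certificate-distance} with $\mu=1/\eta$ already supplies this.
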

\begin{proof}
Set $y^\star=P_\eta(z)$ and
$v=F(\widehat y)+n+(\widehat y-z)/\eta$.
Strong monotonicity of $F+N_{\mathcal X}+\eta^{-1}(I-z)$ gives
\[
 \frac1\eta\norm{\widehat y-y^\star}^2
 \le\ip{v}{\widehat y-y^\star}
 \le\delta\norm{\widehat y-y^\star}.
\]
This proves the distance bound, including the zero-distance case.
Consequently,
\[
 \begin{aligned}
 \norm{F(\widehat y)+n}
 &\le\delta+\frac{\norm{z-\widehat y}}\eta\\
 &\le\frac{\norm{z-y^\star}}\eta+2\delta.
 \end{aligned}
\]
For the last assertion, the solution of the regularized VI for $G$
is $y^\star$, its distance from the model center is $R_z$, and affine
regularization does not increase $L_p$. Lemma~\ref{lem:correction}
therefore gives a certificate for $G$ of norm at most
$\delta=A_pL_pR_z^p$. Substitute this value in the preceding bound.
\end{proof}

For $x\in\mathcal X$, define the Stampacchia and Minty gaps
\[
 g_{\mathrm S}(x)=\sup_{z\in\mathcal X}\ip{F(x)}{x-z},\qquad
 g_{\mathrm M}(x)=\sup_{z\in\mathcal X}\ip{F(z)}{x-z}.
\]
Monotonicity and the outward-normal convention give
\begin{equation}
 0\le g_{\mathrm M}(x)\le g_{\mathrm S}(x)\le D\rtan(x).
 \label{eq:gaps}
\end{equation}
Indeed, $\ip{F(z)}{x-z}\le\ip{F(x)}{x-z}$ and, for every $n\in N_{\mathcal X}(x)$,
\[
 \ip{F(x)}{x-z}\le\ip{F(x)+n}{x-z}\le D\norm{F(x)+n}.
\]
Take the supremum in $z$ and then the infimum in $n$.

For a convex--concave function $\phi$ on $\mathcal U\times\mathcal V$, let
$F(u,v)=(\nabla_u\phi(u,v),-\nabla_v\phi(u,v))$. Convexity and concavity imply
\[
 \phi(u,v')-\phi(u',v)
 \le\ip{\nabla_u\phi(u,v)}{u-u'}
    +\ip{-\nabla_v\phi(u,v)}{v-v'}.
\]
Thus its saddle gap is at most $g_{\mathrm S}(u,v)$ and therefore at most $D\rtan(u,v)$. The upper theorem consequently gives a gap-accuracy $\delta_g$ bound by taking $\eps=\delta_g/D$. A lower bound for the stronger tangent-residual target does not automatically lower-bound the weaker gap target.  The convex--concave statement in Section~\ref{sec:discussion} instead uses the separate gap inequality satisfied by the explicit hard family of Chen et al.; without that additional property, no matching-gap conclusion follows.

\end{document}